\documentclass[11pt]{article}
\usepackage{amssymb,latexsym,amsmath,amsbsy,amsthm,amsxtra,amsgen,dsfont,pdfsync}
\oddsidemargin=0in
\evensidemargin=0in
\topmargin=-.4in
\textheight=134ex
\textwidth=42em
\baselineskip=16pt

\newfont{\msbm}{msbm10 at 11pt}
\newcommand {\R} {\mbox{\msbm R}}

\newcommand {\N} {\mbox{\msbm N}}

\newcommand {\1} {\mathds{1}}

\newfont{\msbmsm}{msbm10 at 8pt}

\def\eps{\varepsilon}

\newtheorem{Theo}{Theorem}
\newtheorem{Lemma}[Theo]{Lemma}
\newtheorem{Cor}[Theo]{Corollary}
\newtheorem{Prop}[Theo]{Proposition}

\newtheorem{Rmk}[Theo]{Remark}

\begin{document}
\title{The size of the last merger and time reversal in $\Lambda$-coalescents}

\author{G\"otz Kersting\thanks{Supported in part by DFG priority program 1590}\,, Jason Schweinsberg\thanks{Supported in part by NSF Grant DMS-1206195}\,, and Anton Wakolbinger$^\ast$}

\maketitle
\begin{abstract}
We consider the number  of blocks involved in the last merger of a $\Lambda$-coalescent started with $n$ blocks.  We give  conditions under which, as $n \to \infty$, the sequence of these random variables a) is tight, b) converges in distribution to a finite random variable  or c) converges to infinity in probability. Our conditions are optimal for $\Lambda$-coalescents that have a dust component. For general $\Lambda$, we relate the three cases to the existence, uniqueness and non-existence of invariant measures for the dynamics of the block-counting process, and in case b) investigate the time-reversal of the block-counting process back from the time of the last merger.  
\end{abstract}

\section{Introduction and main results}

We consider coalescents with multiple mergers, also known as $\Lambda$-coalescents, which were introduced in 1999 by Pitman \cite{pit99} and Sagitov \cite{sagitov}.  If $\Lambda$ is a finite measure on $[0,1]$, then the $\Lambda$-coalescent started with $n$ blocks is a continuous-time Markov chain $(\Pi_n(t), t \geq 0)$ taking its values in the set of partitions of $\{1, \dots, n\}$.  It has the property that whenever there are $b$ blocks, each possible transition that involves merging $k \geq 2$ of the blocks into a single block happens at rate
\begin{equation}\label{lambk}
\lambda_{b,k} = \int_0^1 p^{k-2} (1-p)^{b-k} \: \Lambda(dp),
\end{equation}
and these are the only possible transitions.  One can also define the $\Lambda$-coalescent started with infinitely many blocks, which is a continuous-time Markov process $(\Pi_{\infty}(t), t \geq 0)$ taking its values in the set of partitions of the positive integers such that for all $n$, the restriction of $(\Pi_{\infty}(t), t \geq 0)$ to the integers $\{1, \dots, n\}$ has the same law as $(\Pi_n(t), t \geq 0)$.

Let $N_n(t)$ be the number of blocks in the partition $\Pi_n(t)$.  Denote by $T_n = \inf\{t: N_n(t) = 1\}$  the time of the last merger.  In this paper, we are interested in the distribution of\[ L_n:=N_n(T_n-),\] the number of blocks that coalesce during the last merger.  
The asymptotic behaviour of the distribution of $L_n$ depends on how much mass the measure $\Lambda$ has in the vicinity of point 1. Here it turns out to be decisive whether or not the finiteness condition
\begin{align}
\int_0^1 |\log(1-p)|\, \Lambda(dp) < \infty 
\label{maincond}
\end{align}
is valid. We shall prove that \eqref{maincond} together with a logarithmic nonlattice property implies convergence on the sequence $(L_n)$ in distribution. Without additional assumptions condition \eqref{maincond} entails tightness of $(L_n)$, but in general not convergence. In the presence of dust, \eqref{maincond} turns out to be necessary for tightness of $(L_n)$. When the $\Lambda$-coalescent comes down from infinity, which means that almost surely $N_{\infty}(t) < \infty$ for all $t > 0$, we have $T_{\infty} < \infty$ almost surely.  See \cite{sch00} for a necessary and sufficient condition for the $\Lambda$-coalescent to come down from infinity.  In this case the distribution of $L_n$ converges as $n \rightarrow \infty$  to the distribution of $N_{\infty}(T_{\infty}-)$.  

A second issue is the characterisation of the limit distribution of $L_n$ in case of convergence by means of invariant measures $\mu$. 
Let
\[ \rho_{ij} := \binom i {i-j+1} \lambda_{i,i-j+1}  , \quad \rho_i:= \sum_{j=1}^{i-1}\rho_{ij}, \qquad 1\le j <i .\]
Then $\rho_{ij}$ is the rate at which $N_n$ jumps from state $i$ to $j$, and $\rho_i$ is the total rate of a jump from~$i$. In particular note that $\rho_{i1}=\lambda_{i,i}$. We consider locally finite measures $\mu=(\mu_i)_{i\ge 2}$ on $\{2,3,\ldots\}$ which fulfill the equations
\begin{align} \sum_{j=i+1}^\infty \mu_j \rho_{ji}=\mu_i \rho_i  , \quad i \ge 2 , \quad \text{and}\quad \sum_{j= 2}^\infty\mu_j\rho_{j1} =1.
\label{quasiinva}
\end{align}
Note that for such  measures $\mu$ we have $\mu_i>0$ for all $i \ge 2$.  The first property in \eqref{quasiinva}  says that the measure $\mu$ on $\{2,3,\ldots\}$  is {\em $\rho$-invariant}:  for each $i\ge2$  the flow of mass into the state $i\ge2$ equals the flow out of $i$. The second property says that the total flow out of the set $\{2,3,\ldots\}$ equals one.  We shall address questions of existence and uniqueness of solutions to \eqref{quasiinva} and shall in particular prove that in case of convergence of $L_n$ the limiting distribution has weights $\mu_i\rho_{i1}$, $i \ge 2$, with $(\mu_i)_{i\ge 2}$ being the unique solution of \eqref{quasiinva}. Moreover this representation of the limit will allow us to identify the time-reversal of the block-counting process.

H\'enard \cite{henard} and M\"ohle \cite{mohle14} were able to calculate the limiting distribution for $L_n$ when $\Lambda$ is the beta distribution with parameters $2 - \alpha$ and $\alpha$ for $0 < \alpha < 2$.  Note that this coalescent process comes down from infinity only when $1 < \alpha < 2$.  Earlier, Goldschmidt and Martin \cite{gm05} had calculated this distribution for the Bolthausen-Sznitman coalescent, which is the case $\alpha = 1$.  Abraham and Delmas found this limit for $\alpha = 1/2$ in \cite{ad13}, and for all $\alpha \in (0, 1/2]$ in \cite{ad15}.

We are now going to present our main results. Throughout, we will assume that $\Lambda$ is a nonzero, finite measure  on $[0,1]$.  Theorem \ref{tight} concerns tightness.  

\begin{Theo}\label{tight}
Suppose that condition \eqref{maincond} is satisfied.
Then the sequence $(L_n)_{n \ge 1}$ is tight. 
\end{Theo}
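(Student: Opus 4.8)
The plan is to show that tightness of $(L_n)$ is equivalent to $\sup_n\mathbb P(L_n>K)\to 0$ as $K\to\infty$, and to compute $\mathbb P(L_n>K)$ explicitly. First note that $L_n$ is determined by the jump chain of the block-counting process $N_n$: this chain is strictly decreasing, and ``$L_n>K$'' says precisely that it steps from some state $b>K$ directly down to $1$. Conditioning on whether the chain visits $b$, and using $\rho_{b1}=\lambda_{b,b}$ together with $\mathbb E[\text{time }N_n\text{ spends at }b]=\mathbb P(N_n\text{ visits }b)/\rho_b$ (the holding time at a visited state $b$ being exponential with rate $\rho_b$), one gets the identity
\[
\mathbb P(L_n>K)=\sum_{b>K}\mathbb P(N_n\text{ visits }b)\,\frac{\lambda_{b,b}}{\rho_b}=\sum_{b>K}\lambda_{b,b}\,\mathbb E\big[\text{time }N_n\text{ spends at }b\big].
\]
The probabilistic reading is that the last merger is a simultaneous merger of \emph{all} remaining blocks, and from $b$ blocks such an event occurs at rate $\int_0^1 p^b\,p^{-2}\,\Lambda(dp)=\lambda_{b,b}$. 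Thus everything reduces to showing that the right-hand side tends to $0$ as $K\to\infty$, uniformly in $n$.

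The measure $\Lambda$ enters through an elementary identity. Summing $\lambda_{b,b}/b=\frac1b\int_0^1 p^{b-2}\,\Lambda(dp)$ over $b\ge 2$ and using $\sum_{b\ge2}p^b/b=|\log(1-p)|-p$ gives
\[
\sum_{b\ge2}\frac{\lambda_{b,b}}{b}=\int_0^1\frac{|\log(1-p)|-p}{p^2}\,\Lambda(dp),
\]
whose finiteness is equivalent to \eqref{maincond} (near $p=1$ the integrand is asymptotic to $|\log(1-p)|$, near $p=0$ it is bounded). Since $b\mapsto\lambda_{b,b}/b$ is nonincreasing, Cauchy condensation shows that \eqref{maincond} is in turn equivalent to $\sum_{\ell\ge0}\lambda_{2^\ell,2^\ell}<\infty$. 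This is the summability we exploit, after a dyadic decomposition: grouping the $b>K$ into blocks $[2^\ell,2^{\ell+1})$ and using that $\lambda_{b,b}$ is nonincreasing in $b$,
\[
\mathbb P(L_n>K)\ \le\ \sum_{\ell\ge \lfloor\log_2 K\rfloor}\lambda_{2^\ell,2^\ell}\ \tau_n(\ell),\qquad \tau_n(\ell):=\mathbb E\big[\text{time }N_n\text{ spends with block count in }[2^\ell,2^{\ell+1})\big].
\]

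The main work, and the step I expect to be the real obstacle, is a bound on the occupation times $\tau_n(\ell)=\sum_{b\in[2^\ell,2^{\ell+1})}\mathbb P(N_n\text{ visits }b)/\rho_b$ that is \emph{uniform in $n$} and, after multiplication by $\lambda_{2^\ell,2^\ell}$ and summation over $\ell\ge\lfloor\log_2 K\rfloor$, still tends to $0$. The trivial estimates $\mathbb E[\text{time }N_n\text{ at }b]\le1/\rho_b$ and $\mathbb P(N_n\text{ visits }b)\le1$ give $\tau_n(\ell)\le 2^\ell/\rho_{2^\ell}$, which together with the condensation form of \eqref{maincond} already suffices when $\rho_b$ grows fast enough (e.g.\ like $b$) — this handles Kingman's coalescent and coalescents that come down from infinity. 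When $\rho_b$ grows slowly or (under a strong dust component) stays bounded, one instead has to use that the jump chain then descends through large states by genuinely multiplicative steps: the ratio of a new state to the old one is, up to binomial fluctuations, $1-p$ with $p$ sampled from the probability measure proportional to $p^{-2}\,\Lambda(dp)$, whose logarithm has finite mean \emph{precisely by} \eqref{maincond}; a renewal-type argument then bounds $\sum_{b\in[2^\ell,2^{\ell+1})}\mathbb P(N_n\text{ visits }b)$, and hence $\tau_n(\ell)$, uniformly. Organizing these mechanisms — e.g.\ via the supermartingale $\log N_n(t)+\int_0^t\theta(N_n(s))\,ds$ for a suitable drift $\theta$, with a case distinction based on the finiteness of $\int_0^1 p^{-1}\,\Lambda(dp)$ and of $\int_0^1 p^{-2}\,\Lambda(dp)$ — and making the occupation-time estimate uniform across all intermediate behaviours of $\Lambda$ is the crux; (this occupation measure $b\mapsto \lim_n\mathbb E[\text{time }N_n\text{ at }b]$, when it exists, is exactly the $\rho$-invariant measure of \eqref{quasiinva} up to normalization, which is why the three regimes of the theorem correspond to existence/uniqueness/non-existence of such measures). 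Once $\tau_n(\ell)$ is controlled in the form appropriate to the given $\Lambda$, the displayed bound together with $\sum_\ell\lambda_{2^\ell,2^\ell}<\infty$ yields $\sup_n\mathbb P(L_n>K)\to0$, proving tightness.
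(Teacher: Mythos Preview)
Your decomposition is essentially the same as the paper's: both write $P(L_n>K)$ as a sum over states $b>K$ of (rate of a jump to $1$ from $b$) times (expected time spent at $b$), group dyadically, and invoke the summability $\sum_\ell \lambda_{2^\ell,2^\ell}<\infty$ equivalent to \eqref{maincond}. The paper in fact proves the slightly stronger statement (its Proposition~\ref{mKmprop}) that the block-counting process is unlikely to overshoot any interval $(m,Km]$; your argument is the special case $m=1$.

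The gap is that you do not prove the occupation-time bound $\tau_n(\ell)\le C$, and you substantially overestimate its difficulty. You propose a case distinction on the growth of $\rho_b$, renewal-type arguments in the dust regime, and a supermartingale for $\log N_n$. None of this is needed. The paper's Lemma~\ref{ESn} gives $\tau_n(\ell)\le C$ for \emph{every} nonzero $\Lambda$ by a two-line argument: since any fixed pair of blocks merges at rate at least $\lambda_{2,2}=\Lambda([0,1])>0$, one has $E[N_n(t)]\le 1+(n-1)e^{-\lambda_{2,2}t}$; Markov's inequality then gives a fixed $t_0$ with $P(N_n(t_0)>n/2)\le 1/2$ for all $n\ge 2$, and iterating via the Markov property bounds the expected time to halve the block count by $2t_0$, uniformly in $n$. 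This single constant $C=2t_0$ bounds every $\tau_n(\ell)$, after which your displayed inequality and $\sum_\ell\lambda_{2^\ell,2^\ell}<\infty$ finish the proof immediately. So your skeleton is correct and close to the paper's, but the step you flag as ``the real obstacle'' is in fact trivial and uniform across all regimes of $\Lambda$; the renewal and supermartingale machinery you sketch is a detour.
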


Under an additional regularity condition, we are able to show that the distribution of the number of blocks involved in the last merger tends to a limit as $n \rightarrow \infty$.  We call the measure $\Lambda$ {\em log-nonlattice} if   
\[ \forall \, d>0:\sum_{z=1}^\infty \Lambda (\{1- e^{ -zd}\}) < \Lambda((0,1]) .\]

\begin{Theo}\label{limitdist}
Suppose (\ref{maincond}) holds, and $\Lambda$ is log-nonlattice.  Then the sequence $(L_n)_{n \ge 1}$ converges in distribution.
\end{Theo}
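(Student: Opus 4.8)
\medskip
\noindent\emph{Proof sketch.}
The first step is a reduction. Since the block-counting chain $N_n$ is strictly decreasing it visits each level at most once, so that
\[
\{L_n=i\}=\{N_n\text{ visits }i\}\cap\{\text{the jump out of }i\text{ leads to }1\},
\]
and by the strong Markov property these two events are, conditionally on the first, independent, the second having probability $\rho_{i1}/\rho_i$. Hence
\[
\mathbb{P}(L_n=i)=v^{(n)}_i\,\frac{\rho_{i1}}{\rho_i},\qquad v^{(n)}_i:=\mathbb{P}(N_n\text{ visits }i),
\]
with $v^{(n)}_n=1$ and $v^{(n)}_i=\sum_{j=i+1}^{n}v^{(n)}_j\,\rho_{ji}/\rho_j$ for $i<n$. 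Because $\sum_{i\ge2}\mathbb{P}(L_n=i)=1$ for every $n$ and, by Theorem~\ref{tight}, the family $(L_n)_{n\ge1}$ is tight (so no mass escapes to infinity), it suffices to prove that $v^{(n)}_i$ converges as $n\to\infty$ for each fixed $i\ge2$; the limit law then has weights $v_i\,\rho_{i1}/\rho_i$ with $v_i=\lim_n v^{(n)}_i$, and these sum to $1$ again by tightness.

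To handle $v^{(n)}_i$, fix a large threshold $K\ge i$ and let $M^{(n)}_K\in\{1,\dots,K\}$ be the first level $\le K$ reached by the chain started at $n>K$. Conditioning on $M^{(n)}_K$ and using the Markov property,
\[
v^{(n)}_i=\sum_{j=i}^{K}\mathbb{P}\bigl(M^{(n)}_K=j\bigr)\,v^{(j)}_i,
\]
where the finitely many numbers $v^{(j)}_i$ (with $i\le j\le K$, and $v_i^{(i)}=1$) do not depend on $n$. Thus the whole theorem reduces to showing that, for each fixed $K$, the undershoot $M^{(n)}_K$ converges in distribution as $n\to\infty$: this is a renewal/ladder statement about the descending chain, and the limiting $v_i$ will automatically be independent of $K$ by consistency of the undershoots.

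For the convergence of $M^{(n)}_K$ I would pass to the logarithmic scale. While $N_n$ is large, a $p$-event of the Poisson construction (intensity $p^{-2}\Lambda(dp)$ on $(0,1]$) that is effective at level $b$ sends $b$ to $b-\mathrm{Bin}(b,p)+1\approx b(1-p)$, so the one-step displacement of $\log N_n$ is, as the level tends to infinity, asymptotically distributed as the image of $p^{-2}\Lambda(dp)$ under $p\mapsto-\log(1-p)$; the finiteness of the drift contributed by $p$ near $1$ is precisely condition~\eqref{maincond}. Comparing the descent of $\log N_n$ from a high level with a genuine random walk (a finite-variation subordinator, respectively a law-of-large-numbers descent in the regimes where $\int p^{-2}\Lambda(dp)=\infty$) carrying this displacement law, the classical renewal theorem for the overshoot of a barrier yields convergence of $M^{(n)}_K$ — provided the displacement law is non-arithmetic. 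Here one checks that $\Lambda$ fails to be log-nonlattice exactly when all its mass on $(0,1]$ lies on a geometric lattice $\{1-e^{-zd}:z\ge1\}$, equivalently when $-\log(1-p)$ is $d\mathbb{Z}$-valued under $\Lambda$; so the log-nonlattice hypothesis is exactly the non-arithmeticity required, and in the lattice case $M^{(n)}_K$, hence $L_n$, genuinely oscillates with $n$ (witnessed already by $\Lambda=\delta_{1/2}$, where $\log N_n\approx\log n-m\log 2$).

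The main obstacle is that the increments of $\log N_n$ are not i.i.d.: the binomial thinning, the requirement that at least two blocks be picked, and a possible Kingman component make the one-step law level-dependent, and when $\Lambda$ has no dust the descent from high levels is driven by infinitely many infinitesimal jumps rather than by a random-walk step. One therefore has to couple $\log N_n$ with an exact process only on a range of high levels and show that the accumulated discrepancy down to the fixed level $K$ is negligible uniformly in $n$, and one must dispatch the coming-down-from-infinity case separately — there the chain reaches a bounded level essentially instantaneously and the convergence $L_n\Rightarrow N_\infty(T_\infty-)$ follows from $\Pi_n\Rightarrow\Pi_\infty$ together with a.s.\ continuity of the last-merger data at the absorption time, so that log-nonlatticeness is not needed in that regime. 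Once $M^{(n)}_K\Rightarrow M_K$ is established, $v_i=\sum_{j=i}^{K}\mathbb{P}(M_K=j)\,v^{(j)}_i$ exists, $\sum_{i\ge2}v_i\,\rho_{i1}/\rho_i=1$, and $L_n$ converges in distribution to the law with weights $v_i\,\rho_{i1}/\rho_i$ — which, as announced before \eqref{quasiinva}, will turn out to equal $\mu_i\rho_{i1}$ for the unique solution $\mu$ of \eqref{quasiinva}.
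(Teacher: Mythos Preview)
Your reduction is correct: writing $v_i^{(n)}=\mathbb{P}(N_n\text{ visits }i)$ and conditioning on the first level $M_K^{(n)}\le K$ does reduce the problem to convergence in law of $M_K^{(n)}$ for each fixed $K$. But from that point on the proposal is a programme rather than a proof, and the programme has a genuine gap in the regime you yourself flag as the ``main obstacle''. The subordinator approximation you invoke---comparing $\log N_n$ with $\log n - S(t)$ and then applying the classical renewal theorem for overshoots---is exactly what the paper develops in Section~3, but it is only available under the dust condition \eqref{dustcond}. In the intermediate regime where $\int_0^1 p^{-1}\Lambda(dp)=\infty$ yet the coalescent does not come down from infinity (e.g.\ the Bolthausen--Sznitman case) there is no finite-activity walk to compare with, and your phrase ``law-of-large-numbers descent'' does not supply a renewal theorem. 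So the step ``$M_K^{(n)}\Rightarrow M_K$'' is not established in general.

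The paper avoids this by \emph{not} trying to prove convergence of the undershoot. Instead it couples two chains started from $n_1<n_2$ directly: Lemma~\ref{NMK} shows that from any starting point in $(m,Km]$ the chain enters a short interval $[(1-\delta)\alpha m,\alpha m]$ with probability bounded below by a constant $C>0$, uniformly in $m$. This lemma is proved by a case split (if $\Lambda$ has mass arbitrarily close to $0$ one argues with small $p$-mergers; otherwise $\Lambda$ is supported away from $0$, so \eqref{Gnd} holds and the subordinator approximation plus the renewal theorem applies), and crucially it only needs a uniform \emph{lower bound}, not convergence. The coupling then runs the two chains independently through a geometric sequence of scales until both land in a common short window, after which they are driven by the same Poisson process $\Psi$; exchangeability and tightness finish the argument. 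In short, the paper trades your ``prove the undershoot converges'' for the weaker ``prove the chain hits a short window with uniformly positive probability'', and that weaker statement is what makes the argument go through in all regimes.
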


In this theorem the log-nonlattice assumption cannot be completely avoided. Indeed we shall show below that when $\Lambda$ has all its mass at one single point within $(0,1)$, the sequence $(L_n)_{n \ge 1}$, though tight, does not converge in distribution. It is natural to conjecture that in the lattice case we always will experience such non-convergence.

The next theorem shows that condition \eqref{maincond} is necessary for tightness of the size of the last merger in the presence of dust.

\begin{Theo}\label{nottight}
Suppose 
\begin{equation}\label{dustcond}
\int_0^1 p^{-1} \: \Lambda(dp) < \infty
\end{equation}
and hence in particular $\Lambda(\{0\})=0$. Also suppose
\begin{equation}\label{maincond2}
\int_0^1 |\log(1 - p)| \: \Lambda(dp) = \infty.
\end{equation}
Then for all positive integers $\ell$, we have
\begin{equation}\label{NnKlim}
\lim_{n \rightarrow \infty} P(L_n \leq \ell) = 0.
\end{equation}
\end{Theo}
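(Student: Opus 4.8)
Here is how I would approach it.

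\medskip
\noindent If $\Lambda=c\,\delta_1$ then from any number $b$ of blocks the only possible jump is the simultaneous merger of all $b$ blocks, so $N_n$ jumps directly from $n$ to $1$, $L_n=n\to\infty$, and \eqref{NnKlim} is immediate; so assume $\Lambda$ is not of this form. Since \eqref{dustcond} forces $\Lambda(\{0\})=0$, this means $\Lambda((0,1))>0$. The first move is to reduce \eqref{NnKlim}, for all $\ell$, to the single case $\ell=2$. Put $v^{(n)}_j:=P(N_n\text{ takes the value }j)$. As $N_n$ is non-increasing and absorbed at $1$, it takes some value in $\{2,\dots,\ell\}$ exactly when $L_n\le\ell$, so $P(L_n\le\ell)\le\sum_{j=2}^{\ell}v^{(n)}_j$. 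For $j\ge 3$ the event ``$N_n$ takes the value $j-1$'' contains the event ``$N_n$ takes the value $j$ and then jumps to $j-1$'', so by the strong Markov property $v^{(n)}_j\,(\rho_{j,j-1}/\rho_j)\le v^{(n)}_{j-1}$; here $\rho_{j,j-1}=\binom j2\int_0^1(1-p)^{j-2}\,\Lambda(dp)>0$ because $\Lambda((0,1))>0$. Iterating gives $v^{(n)}_j\le C_j\,v^{(n)}_2$ with $C_j<\infty$ depending only on $j$ and $\Lambda$, hence $P(L_n\le\ell)\le\big(\sum_{j=2}^{\ell}C_j\big)\,v^{(n)}_2$. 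Since $v^{(n)}_2=P(L_n=2)=P(L_n\le 2)$, the whole theorem is equivalent to $v^{(n)}_2\to 0$.

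\medskip
\noindent To see why $v^{(n)}_2\to0$, one should view the block count, while it is large, as a multiplicative random walk. With $k$ blocks present, a merger of type $p$ — the conditional law of $p$ at a merger has density $\propto p^{-2}\,P(\mathrm{Bin}(k,p)\ge 2)\,\Lambda(dp)$, which for large $k$ and $p$ away from $0$ is close to $\propto p^{-2}\,\Lambda(dp)$ — turns $k$ into roughly $k(1-p)$, so $\log N_n$ runs a random walk with i.i.d.-like nonnegative increments of size $\approx|\log(1-p)|$. Because $p^{-2}\asymp1$ near $p=1$, hypothesis \eqref{maincond2} says precisely that these increments have infinite mean, while \eqref{dustcond} ensures that the abundant small-$p$ events produce only a finite drift (in particular the coalescent does not come down from infinity), so the walk is genuinely heavy-tailed rather than dominated by a swarm of negligible steps. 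A downward random walk with i.i.d. nonnegative infinite-mean increments, started from a large height, crosses any fixed low level in one long jump from far above: the overshoot below the level tends to $+\infty$ in probability as the starting height grows (the renewal-theoretic inspection paradox for infinite mean). Hence, with probability tending to $1$, $N_n$ falls from a large value directly to $1$ and never passes through $2$; equivalently, for any fixed moderate threshold $M$, writing $\sigma_M$ for the first time $N_n\le M$, one has $P(N_n(\sigma_M)=1)\to1$, which gives $v^{(n)}_2\le P(N_n(\sigma_M)\ge2)\to0$.

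\medskip
\noindent The hard part, which I expect to be the real obstacle, is to make this rigorous uniformly in $n$. One must couple $N_n$ with an honest multiplicative random walk up to the first time the count falls below $M$, controlling (a) the discrepancy between ``merging $a$ blocks'' and ``multiplying by $1-p$'' — this is where \eqref{dustcond} is used quantitatively, to bound the effect of the small-$p$ events and of the $\mathrm{Bin}(k,p)$ fluctuations — and (b) the mild dependence of the per-step jump law on the current count; and one must prove the overshoot estimate for the increment law that $\Lambda$ induces near $p=1$, which can be very irregular, so a black-box infinite-mean renewal theorem is not directly available and the specific tail behaviour has to be exploited. Below the threshold $M$ the process is $n$-independent and plays no role, since by then it has, with high probability, already jumped past $2$.
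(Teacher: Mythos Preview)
Your outline is correct and is essentially the paper's strategy: the paper makes the multiplicative-walk heuristic rigorous by coupling $\log N_n$ uniformly with the solution $Y_n$ of an SDE driven by the subordinator $S$ having L\'evy measure the pushforward of $p^{-2}\Lambda(dp)$ under $p\mapsto-\log(1-p)$, plus a small state-dependent positive drift $f(Y_n)\,dt$ that records the ``$+1$'' in each merger (Theorem~\ref{couplingTh} and its corollary), and then proves an overshoot estimate for precisely such drifted subordinators in the infinite-mean case (Proposition~\ref{subwithdrift}, following Durrett's proof of Blackwell's renewal theorem). So the two obstacles you flag --- the uniform coupling using \eqref{dustcond}, and the renewal-type overshoot under a state-dependent perturbation --- are exactly the two technical pieces the paper supplies; the irregularity of $\Lambda$ near $1$ turns out not to be the issue, the drift is.

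Your reduction to $\ell=2$ via $v^{(n)}_j\le C_j\,v^{(n)}_2$ is a valid elementary shortcut the paper does not take: since the overshoot argument already shows that $Y^{\log n}$ jumps over any fixed window $[-K',K']$ with probability tending to one, the paper handles every finite $\ell$ directly without this step.
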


It was shown in \cite{pit99} that (\ref{dustcond}) is the condition under which the $\Lambda$-coalescent has a dust component, which means that for all $t > 0$, the partition $\Pi_{\infty}(t)$ contains singleton blocks almost surely.  We can see from the statements of Theorems \ref{tight} and \ref{nottight} that when $\Lambda$ satisfies (\ref{dustcond}), the condition (\ref{maincond2}) is necessary and sufficient for (\ref{NnKlim}) to hold.  Therefore, the only case that remains open is the case when the $\Lambda$-coalescent fails to come down from infinity but there is no dust component.  In that case, we expect that it is possible that (\ref{maincond2}) holds but (\ref{NnKlim}) fails to hold.  

The central tool for the proof of Theorem \ref{nottight} is a uniform approximation of  $\log N_n(t)$ by the solution of an SDE driven by a subordinator, see Theorem \ref{couplingTh} in Section 3 and its corollaries. These results can be seen as refinement and generalization of the subordinator approximation by Gnedin, Iksanov, and Marynych \cite{gim11} in the presence of a dust component, see Remark \ref{gimremark} below.

Whenever the random variables $L_n$  converge in distribution,  it is natural to ask whether convergence in distribution holds for the  block-counting processes $N_n = (N_n(t))_{t\ge 0}$ as $n \to \infty$ in any finite observation window around state 1. An appropriate description is by means of time-reversal. As a tool  we use $\rho$-invariant measures satisfying equations \eqref{quasiinva}. 
%
Existence and uniqueness of such measures are closely related to the asymptotic behaviour of the sequence of distributions of the last merger sizes $L_n$. 

\begin{Theo}\label{quasiinv}
\begin{enumerate}
\item[\em (i)]
If $L_n\to \infty$ in probability as $n\to \infty$, then there is no solution to \eqref{quasiinva}. 
\item[\em (ii)]
If there is a probability measure $\pi=(\pi_i)_{i \ge 2}$ on $\{2,3,\ldots\}$ and a sequence of positive numbers $\alpha_n$, $n \ge 1$, not converging to 0, such that as $n \to \infty$
\[ P(L_n=i) \sim \alpha_n \pi_i\]
for all $i\ge 2$, then  the  measure $\mu=(\mu_i)_{i\ge 2}$ given by $\mu_i\rho_{i1}= \pi_i$, $i \ge 2$, is the unique solution to \eqref{quasiinva}.

In particular, if the sequence $(L_n)_{n \ge 1}$ converges in distribution to a finite random variable $L_\infty$, then 
\[  P(L_\infty=i) = \mu_i \rho_{i1}=\mu_i\lambda_{i,i}, \quad i \ge 2.\]
\item[\em (iii)]
In all other cases, there exist at least two different solutions of \eqref{quasiinva}.

In particular we have at least two solutions if the sequence $(L_n)_{n \ge 1}$ is tight, but not convergent in distribution.

\end{enumerate}
\end{Theo}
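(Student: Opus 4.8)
\emph{Proof plan.}
The argument rests on two observations. First, running $N_n$ to the (a.s.\ unique) epoch at which it visits a given state $i\le n$ and applying the strong Markov property there yields the identity
\[
P(L_n=i)=h_n(i)\,\frac{\rho_{i1}}{\rho_i},\qquad h_n(i):=P\big(N_n\text{ visits }i\big),
\]
so that, with $\mu_i^{(n)}:=h_n(i)/\rho_i$ and $q_{ji}:=\rho_{ji}/\rho_i$, the recursion $h_n(i)=\sum_{j=i+1}^{n}h_n(j)\,q_{ji}$ $(2\le i\le n-1)$ together with $\sum_{j=2}^{n}h_n(j)q_{j1}=1$ shows that $(\mu_i^{(n)})$ solves \eqref{quasiinva} ``truncated at level $n$'', and $P(L_n=i)=\mu_i^{(n)}\rho_{i1}$. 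The whole problem is thus to control the limits of these truncated solutions. Second, every solution $\mu$ of \eqref{quasiinva} encodes an \emph{entrance law from $\infty$} for the block-counting dynamics: setting $h_i:=\mu_i\rho_i$ $(i\ge2)$ and $h_1:=1$, one has $h_i=\sum_{j>i}h_jq_{ji}$ for all $i\ge1$, hence the strictly increasing chain $Y$ on $\{1,2,\dots\}$ with $Y_0=1$ and $P(Y\text{ jumps from }i\text{ to }j)=h_jq_{ji}/h_i$ is well defined, has $(h_i)$ as expected occupation measure, and tends to $\infty$; its time reversal $\check Y$ is then a strictly decreasing process that enters from $\infty$ through infinitely many states, has transition probabilities $q_{ji}$, and is absorbed at $1$ --- i.e.\ it is the block-counting process ``run down to its last merger''.

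For (i), suppose a solution $\mu$ exists and let $\check Y$ be the associated entrance law. For $m\ge i$ write $\tau_m$ for the first time $\check Y$ drops to a level $\le m$ and $J_m:=\check Y_{\tau_m}$; since $\check Y$ enters from $\infty$ through infinitely many states, $J_m\uparrow\infty$ a.s. By the strong Markov property at $\tau_m$, given $J_m$ the remainder of $\check Y$ is the block-counting chain started from $J_m$, so (with $h_j(i):=P(N_j\text{ visits }i)=0$ for $j<i$)
\[
\mu_i\rho_i=P\big(\check Y\text{ visits }i\big)=E\big[h_{J_m}(i)\big],\qquad m\ge i.
\]
Estimating $E[h_{J_m}(i)]\le\sup_{j\ge j_0}h_j(i)+P(J_m<j_0)$ and letting $m\to\infty$ gives $\mu_i\rho_i\le\limsup_{n}h_n(i)$ for every $i$. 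But $L_n\to\infty$ in probability is equivalent to $h_n(i)\to0$ for every $i$, since $P(L_n\le\ell)=\sum_{i=2}^{\ell}h_n(i)q_{i1}$ and $q_{i1}=\lambda_{i,i}/\rho_i>0$ as $\Lambda\ne0$. Hence $\mu_i\rho_i=0$, contradicting $\mu_i>0$; so no solution exists.

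For (ii), the hypothesis $P(L_n=i)\sim\alpha_n\pi_i$ forces $h_n(i)\sim\alpha_n\pi_i/q_{i1}$, and $\alpha_n$ is bounded (as $P(L_n=i_0)\le1$). Given \emph{any} solution $\mu$, insert this into the identity $\mu_i\rho_i=E[h_{J_m}(i)]$: writing $h_j(i)=\alpha_j(\pi_i/q_{i1})(1+\eps_{j,i})$ with $\eps_{j,i}\to0$ as $j\to\infty$, and using $J_m\uparrow\infty$ together with boundedness of $\alpha$, bounded convergence gives $\mu_i\rho_i\,q_{i1}/\pi_i=E[\alpha_{J_m}]+o(1)$ as $m\to\infty$. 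The left-hand side does not depend on $m$, so $E[\alpha_{J_m}]$ converges to a constant $c$ that does not depend on $i$; thus $\mu_i=c\,\pi_i/\rho_{i1}$ for all $i$, and the normalization $\sum_i\mu_i\rho_{i1}=1=\sum_i\pi_i$ forces $c=1$. This yields the uniqueness and the identification $\mu_i=\pi_i/\rho_{i1}$; the existence of \emph{some} solution will follow from the converse discussed next, since $\alpha_n\not\to0$ precludes $L_n\to\infty$.

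For (iii) it remains to show that in all remaining cases \eqref{quasiinva} has at least two solutions. The common mechanism is: from any subsequence extract, by diagonalization, a further subsequence $(n_k)$ along which $h_{n_k}(i)$ converges for every $i$; normalizing by $h_{n_k}(i_0)$ for a suitable $i_0$ --- kept bounded away from $0$ by using $L_n\not\to\infty$ where needed --- the limits $g_i\ge0$ satisfy $g_{i_0}=1$ and, by Fatou applied to $h_{n_k}(i)=\sum_{j=i+1}^{n_k}h_{n_k}(j)q_{ji}$,
\[
g_i\ge\sum_{j>i}g_j q_{ji}\qquad(i\ge2).
\]
\textbf{The main obstacle} is to turn this into an equality, that is, to prove that no mass of the flow $\sum_j h_{n_k}(j)q_{ji}$ escapes to $j=\infty$ in the limit --- equivalently, that the level from which $N_n$ enters a fixed state $i$ is tight uniformly in $n$. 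This ``no leakage at infinity'' bound is exactly what the tightness analysis behind Theorem~\ref{tight} and the subordinator coupling of Theorem~\ref{couplingTh} deliver in the regimes that occur here (recall $L_n\not\to\infty$ forces \eqref{maincond} when there is dust, by Theorems~\ref{tight}--\ref{nottight}). Granted it, a suitable rescaling of $g$ is a genuine solution of \eqref{quasiinva}. Hence: if no solution exists the rescaling is impossible, so $h_n(i)\to0$ for all $i$ and $L_n\to\infty$; if the solution $\mu$ is unique, every limit $g$ equals the same multiple of $\mu$, so $h_n(i)/(\mu_i\rho_i)$ converges to a limit independent of $i$, i.e.\ $P(L_n=i)\sim\alpha_n\mu_i\rho_{i1}$, with $\alpha_n\not\to0$ because part~(i) gives $\limsup_n h_n(i)\ge\mu_i\rho_i>0$. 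Together with (i) and (ii), these converses show that the case of two or more solutions is exactly the complement of the hypotheses of (i) and (ii); and when $(L_n)$ is tight but not convergent, two distinct subsequential limits of $(P(L_n=\cdot))$ --- now honest probability distributions --- give two distinct solutions directly.
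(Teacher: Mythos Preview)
Your architecture is essentially the paper's: the identity $P(L_n=i)=\mu_i^{(n)}\rho_{i1}$ with $\mu_i^{(n)}=h_n(i)/\rho_i$, the representation of any solution $\nu$ via the reversed chain started at $1$ with transitions $\nu_j\rho_{ji}/(\nu_i\rho_i)$ (your $Y$, the paper's $\tilde X$), and the resulting mixture formula $\nu_i\rho_i=E[h_{J_m}(i)]$ (the paper's $\nu_i=\sum_n\mu_i^{(n)}\omega_{n,a}$) are the same ingredients. Parts (i) and the uniqueness half of (ii) go through as you write them.

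The genuine gap is exactly where you flag it: the ``no leakage at infinity'' step needed to upgrade the Fatou inequality $g_i\ge\sum_{j>i}g_jq_{ji}$ to an equality, equivalently to show that
\[
\sup_n\,P\big(L_n=i,\ N_n(t)\notin[i{+}1,k]\text{ for all }t\big)\to 0\quad(k\to\infty).
\]
Your justification --- that Theorem~\ref{tight} / Proposition~\ref{mKmprop} and Theorem~\ref{couplingTh} deliver this ``in the regimes that occur here'' --- does not cover all cases. Proposition~\ref{mKmprop} assumes \eqref{maincond}, and your reduction ``$L_n\not\to\infty$ forces \eqref{maincond}'' is only available in the dust case via Theorem~\ref{nottight}; in the regime with no dust and no coming down from infinity the implication is open (the paper says so explicitly after Theorem~\ref{nottight}). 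So your argument for existence in (ii) and for (iii) is incomplete precisely there.

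The paper closes this gap with a standalone lemma (its Lemma preceding the proof of Theorem~\ref{quasiinv}) that holds for \emph{every} nonzero finite $\Lambda$, with no hypothesis like \eqref{maincond}: for each fixed $i$ and $\eps$ one bounds $P(N_n(\tau_{k+1,n})=i)$ uniformly in $n$ by splitting on the size of $1-\tilde p$ at the overshoot time, using only Lemma~\ref{ESn} and elementary binomial ratio estimates. That lemma is then fed into the analogue of your subsequential-limit argument (the paper's Lemma~\ref{quasi1}) to pass to the limit \emph{in the equation}, not just in an inequality. With that lemma in hand, your plan for (ii)--(iii) becomes correct; without it, the leakage control is unproven in the intermediate regime.
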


\noindent
In the case of a coalescent coming down from infinity, as already stated above, item (ii) applies.  In the presence of dust the three cases all occur (see Theorem \ref{limitdist}, Theorem \ref{nottight}, and Section \ref{Eldon}). At first sight one may expect that the condition $P(L_n=i) \sim \alpha_n \pi_i$ in item (ii) will occur only with $\alpha_n \to 1$, that is the random variables $L_n$ converge in distribution.  At the moment, however, we cannot exclude the possibility that the sequence $(\alpha_n)$ is not convergent.

\mbox{}\\
Theorem \ref{quasiinv} will allow us to treat 
the time-reversal  $\hat N_n=(\hat N_n(t))_{t \ge 0}$ of the block-counting process $N_n$. This process is defined  as the c\`adl\`ag process given by
\[ \hat N_n(t):= \begin{cases} N_n((T_n-t)-)  &  \text{for }0 \le t< T_n, \\ n & \text{for } t \ge T_n. \end{cases} \]
In particular we have $\hat N_n(0)=L_n$. 

\begin{Theo}\label{timerev}
If the sequence $(L_n)_{n \geq 1}$ converges in distribution, then also
the sequence of processes $(\hat N_n)_{n \ge 1}$ converges in distribution in Skorohod space.  The limit  $\hat N_\infty$ is  a Markov jump process with values in $\{2,3,\ldots\}$ and jump rates
\[ \hat \rho_{ij} := \frac{ \mu_j\rho_{ji}}{\mu_i}, \quad i<j ,  \]
where the $\mu_i$ are the weights of the $\rho$-invariant measure from Theorem \ref{quasiinv} (ii).
\end{Theo}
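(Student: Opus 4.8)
The plan is to reduce the statement to three ingredients: (a) an exact description of the law of $\hat N_n$ for fixed $n$ as a time‑homogeneous continuous‑time Markov chain, (b) convergence of its jump rates and initial law as $n\to\infty$, and (c) the upgrade from (a)+(b) to convergence in Skorohod space. For (a), write $g_n(i)$ for the probability that the forward block‑counting process $N_n$ ever visits state $i$ (so $g_n(n)=1$). Since $N_n$ is a strictly decreasing Markov chain absorbed at $1$, a path decomposition shows that the reversed jump chain is again Markov: a forward path realizing a reversed step from $i$ to $j$ (with $2\le i<j\le n$) splits into an arbitrary descent from $n$ to $j$ — contributing the factor $g_n(j)$ — followed by the explicit steps $j\to i\to\cdots\to 1$, so that forming the ratio of the joint laws of successive reversed states makes the dependence on the past cancel and yields reversed transition probabilities $\frac{g_n(j)}{g_n(i)}\cdot\frac{\rho_{ji}}{\rho_j}$; these sum to $1$ over $j>i$ because $\sum_{j>i}g_n(j)\rho_{ji}/\rho_j=g_n(i)$ (the forward chain enters $i$ through exactly one jump from above). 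For the holding times one uses that $T_n$ and the number of jumps are measurable with respect to the forward jump chain alone, so conditionally on that chain the holding times are independent exponentials with the usual parameters and reversal merely permutes them. Hence $\hat N_n$ on $[0,T_n)$ is a CTMC started from $L_n$ with holding rate $\rho_i$ in state $i$ and jump rates $\hat\rho_n(i,j)=\frac{\rho_i}{\rho_j}\cdot\frac{g_n(j)}{g_n(i)}\,\rho_{ji}$, absorbed at $n$ at time $T_n$.

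For (b), note that the forward chain's last merger starts from $i$ exactly when it visits $i$ and then jumps directly to $1$, so $P(L_n=i)=g_n(i)\,\rho_{i1}/\rho_i$. By hypothesis $(L_n)$ converges in distribution to a finite limit $L_\infty$, and by Theorem \ref{quasiinv}(ii) that limit has law $P(L_\infty=i)=\mu_i\rho_{i1}$ with $(\mu_i)_{i\ge2}$ the unique solution of \eqref{quasiinva}. Since the $L_n$ are integer valued, $P(L_n=i)\to\mu_i\rho_{i1}$ for every $i$, hence $g_n(i)\to\mu_i\rho_i$. Feeding this into the formula from (a) gives $\hat\rho_n(i,j)\to\frac{\mu_j}{\mu_i}\rho_{ji}=\hat\rho_{ij}$, while the first equation in \eqref{quasiinva} yields $\sum_{j>i}\hat\rho_{ij}=\rho_i$ and the second makes $(\mu_i\rho_{i1})_{i\ge2}$ a probability vector. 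Thus the prospective limit $\hat N_\infty$ — the (minimal) Markov jump process with jump rates $\hat\rho_{ij}$, holding rate $\rho_i$ at $i$, and initial law $(\mu_i\rho_{i1})_{i\ge2}$ — is well defined; in the come‑down‑from‑infinity case it reaches $\infty$ in finite time, corresponding exactly to $\hat N_n$ reaching its absorbing state $n$, so the natural Skorohod space is $D([0,\infty),\{2,3,\ldots\}\cup\{\infty\})$ with $\infty$ absorbing.

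For (c), fix a horizon $t_0$. If the coalescent does not come down from infinity then, by the usual $0$–$1$ dichotomy, $N_\infty(t)=\infty$ a.s.\ for all $t>0$, so in the standard coupling of the $N_n$ one has $N_n(t)\to\infty$ and $T_n\to\infty$ in probability; on a high‑probability event $\hat N_n$ then coincides on $[0,t_0]$ with a genuine CTMC of rates $\hat\rho_n$, and coupling the jump chains (whose transition laws converge in total variation, the limiting vector being a probability vector, by Scheff\'e) together with the common holding‑time exponentials forces path agreement up to hitting any fixed level; letting that level tend to infinity gives convergence in distribution on $[0,t_0]$. If the coalescent does come down from infinity, argue directly: in the standard coupling $N_n\uparrow N_\infty$ and $T_n\uparrow T_\infty:=\inf\{t:N_\infty(t)=1\}<\infty$ a.s., so $\hat N_n(t)=N_n((T_n-t)-)\to N_\infty((T_\infty-t)-)=\hat N_\infty(t)$ for $t<T_\infty$, while for $t>T_\infty$ both sides equal the boundary point ($n\to\infty$, resp.\ $\infty$); time‑reversal about a convergent stopping time of a convergent c\`adl\`ag path is continuous for the Skorohod topology, which gives the claim. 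In both cases the limit is the Markov jump process described in (b), which is Theorem \ref{timerev}.

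The identity $g_n(i)\to\mu_i\rho_i$ is essentially supplied by Theorem \ref{quasiinv}(ii), and the time‑reversal formula in step (a) is a routine if slightly delicate Markov‑chain computation; the point requiring genuine care is step (c), namely controlling the behaviour of $\hat N_n$ near its absorbing top state $n$ uniformly in $n$ so that no probability mass escapes in the Skorohod limit — in particular, in the come‑down‑from‑infinity regime one must match ``$\hat N_n$ reaches $n$ and stays'' with ``$\hat N_\infty$ explodes to $\infty$ in finite time,'' which is where the bulk of the technical work lies.
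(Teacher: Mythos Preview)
Your core argument coincides with the paper's: both compute the joint law of the first $r$ reversed jumps and holding times via the identity $P(N_n\text{ visits }i)=\rho_i\mu_i^{(n)}$ (your $g_n(i)$), pass to the limit using Theorem~\ref{quasiinv}(ii) so that $g_n(i)\to\mu_i\rho_i$, and then rewrite the limiting density as that of a Markov jump process with rates $\hat\rho_{ij}=\mu_j\rho_{ji}/\mu_i$ and $\hat\rho_i=\rho_i$ via the $\rho$-invariance.

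The only difference is in your step~(c). The paper simply declares that for pure jump processes it suffices to prove convergence in distribution of the vectors $(\hat N_n(0),\Delta_0,\ldots,\hat N_n(\gamma_r),\Delta_r)$ for each fixed $r$, noting that $P(\zeta_n<r)\to 0$; this handles both regimes at once without case splitting. You instead give a more elaborate (and still somewhat heuristic) argument that distinguishes the CDI and non-CDI cases, using in the first a direct coupling through $\Pi_\infty$ and in the second a coupling of the jump chains via Scheff\'e. Your route makes the potential explosion of the limit process more visible and clarifies why one may want the compactified state space $\{2,3,\ldots\}\cup\{\infty\}$, but it is longer and the appeal to ``continuity of time-reversal about a convergent stopping time in the Skorohod topology'' would itself need justification. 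The paper's route is more economical: once the law of the first $r$ jumps converges for every $r$, Skorohod convergence of step processes is immediate.
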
 

\begin{Rmk} For the Kingman coalescent a direct computation shows that  the solution of \eqref{quasiinva} is given by
\[ \mu_i= \frac 2{i(i-1)} \ , \ i \ge 2. \]
 For $\Lambda = {\rm Beta}(2-\alpha, \alpha)$ with $\alpha \in (0,2)$, H\'enard \cite{henard} and M\"ohle \cite{mohle14} obtain
$$P(L_\infty = i)=\begin{cases}(-1)^{i-1}\alpha\binom{\alpha-1}{i-1}\int_{[0,1]}\frac {x^{i-1}}{1-(1-x)^{1-\alpha}} \, dx \quad &\mbox{ if } \alpha \neq 1\\ -\frac 1{i-1}\int_{[0,1]}\frac{x^{i-1}}{\log(1-x)} \, dx \quad &\mbox{ if } \alpha = 1.\end{cases}$$
Since in this case  $\lambda_{i,i} = \frac{B(i-\alpha, \alpha)}{B(2-\alpha, \alpha)}$, we obtain  from Theorem \ref{quasiinv} an expression for the $\rho$-invariant measure $\mu$ obeying \eqref{quasiinva}.
\end{Rmk}

The rest of this paper is organized as follows.  We prove Theorem \ref{tight} in Section 2.  In Section 3, we show how to approximate the number of blocks in the $\Lambda$-coalescent by means of a subordinator when (\ref{dustcond}) holds.  We prove Theorem \ref{limitdist} in Section 4.  In Section 5 we give an example in which $(L_n)_{n \geq 1}$ is tight but does not converge in distribution because the log-nonlattice assumption in Theorem~\ref{limitdist} fails.  We then derive Theorem \ref{nottight} in Section 6, and we prove Theorems \ref{quasiinv} and \ref{timerev} in Section 7.

\section{Proof of Theorem 1}\label{sec2}

It will be useful throughout the paper to work with a Poisson process construction of the \linebreak $\Lambda$-coalescent.  The construction that we will give is a slight variation of the original such construction provided by Pitman in \cite{pit99}.

Assume 
$\Lambda(\{0\}) = 0$.  Let $\Psi$ be a Poisson point process on $(0, \infty) \times (0, 1] \times [0, 1]^n$ with intensity $$dt \times p^{-2} \Lambda(dp) \times du_1 \times \dots \times du_n.$$  Let $\Pi_n(0) = \{\{1\}, \dots, \{n\}\}$ be the partition of the integers $1, \dots, n$ into singletons.  Suppose $(t, p, u_1, \dots, u_n)$ is a point of $\Psi$, and $\Pi_n(t-)$ consists of the blocks $B_1, \dots, B_b$, ranked in order by their smallest element.  Then $\Pi_n(t)$ is obtained from $\Pi_n(t-)$ by merging together all of the blocks $B_i$ for which $u_i \leq p$ into a single block.  These are the only times that mergers occur.  This construction is well-defined because almost surely for any fixed $t_0 < \infty$, there are only finitely many points $(t, p, u_1, \dots, u_n)$ of $\Psi$ for which $t \leq t_0$ and at least two of $u_1, \dots, u_n$ are less than or equal to $p$.  The resulting process $\Pi_n = (\Pi_n(t), t \geq 0)$ is the $\Lambda$-coalescent.  When $(t, p, u_1, \dots, u_n)$ is a point of $\Psi$, we say that a $p$-merger occurs at time $t$.  

We will need the following simple lemma pertaining to the rate at which the number of blocks decreases.

\begin{Lemma}\label{ESn}
Consider the $\Lambda$-coalescent $\Pi_n$ started with $n$ blocks and let $0<\gamma < 1$.  Let $W_n = \inf\{t\ge 0: N_n(t) \leq \gamma n\}$.  Then there exists a positive constant $C$, depending on $\Lambda$ and $\gamma$ but not on $n$, such that $E[W_n] \leq C$ for all $n \geq 2$.
\end{Lemma}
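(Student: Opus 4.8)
The idea is that, although the total rate $\rho_b$ at which mergers occur when there are $b$ blocks need not grow with $b$ (indeed $\rho_b\uparrow\int_0^1 p^{-2}\,\Lambda(dp)$ as $b\to\infty$, which may be finite, so the naive estimate $E[W_n]\le\sum_{i>\gamma n}\rho_i^{-1}$ is useless), the \emph{large} mergers by themselves already make $N_n$ contract by a constant factor each time one occurs, so that $N_n$ drops below $\gamma n$ after only a geometric number of them. Since $\Lambda\neq0$, fix $p_0\in(0,1)$ with $\beta:=\int_{[p_0,1]}p^{-2}\,\Lambda(dp)\in(0,\infty)$, and for the moment assume $\Lambda(\{0\})=0$ (an atom of $\Lambda$ at $0$ only adds Kingman-type binary mergers, which accelerate the decrease of $N_n$ and hence can only make $W_n$ smaller, so the general case follows a fortiori). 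In the Poisson construction of $\Pi_n$ given above, the points of $\Psi$ whose $p$-coordinate lies in $[p_0,1]$ --- the \emph{big} points --- have time coordinates forming a Poisson process of rate $\beta$; write $0<\tau_1<\tau_2<\cdots$ for its epochs and $(\mathcal F_t)$ for the natural filtration of $\Psi$.

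The first step is a one-step estimate: given that there are $b$ blocks and that a big point with $p$-coordinate $p\in[p_0,1]$ is used, the number of blocks taking part in the merger is binomially distributed with parameters $b$ and $p$, so the number of blocks right after the merger has mean $b(1-p)+1-(1-p)^b\le b(1-p_0)+1$. Since $N_n$ is nonincreasing, and the small mergers interleaved between big points can only decrease it further, iterating this bound over the first $k$ big points gives $E[N_n(\tau_k)]\le n(1-p_0)^k+p_0^{-1}$; by the strong Markov property at $\tau_{mk}$ the same bound holds conditionally on $\mathcal F_{\tau_{mk}}$ with $n$ replaced by $N_n(\tau_{mk})\le n$. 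Now fix an integer $k$ with $(1-p_0)^k\le\gamma/4$ --- it depends only on $\Lambda$ and $\gamma$ --- and set $n_1:=\lceil 4/(\gamma p_0)\rceil$. Then for all $n\ge n_1$ and $m\ge0$ we get $E[N_n(\tau_{(m+1)k})\mid\mathcal F_{\tau_{mk}}]\le\gamma n/2$, hence $P(W_n>\tau_{(m+1)k}\mid\mathcal F_{\tau_{mk}})\le P(N_n(\tau_{(m+1)k})>\gamma n\mid\mathcal F_{\tau_{mk}})\le\tfrac12$ by Markov's inequality. Since $\{W_n>\tau_{(m+1)k}\}\subseteq\{W_n>\tau_{mk}\}\in\mathcal F_{\tau_{mk}}$, this yields $P(W_n>\tau_{(m+1)k})\le\tfrac12\,P(W_n>\tau_{mk})$, and induction gives $P(W_n>\tau_{mk})\le 2^{-m}$ for every $m\ge0$.

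To finish, note that $\tau_{(m+1)k}-\tau_{mk}$ is a sum of $k$ independent $\mathrm{Exp}(\beta)$ variables and, by the strong Markov property of $\Psi$ at the stopping time $\tau_{mk}$, is independent of $\mathcal F_{\tau_{mk}}$. From $W_n\le\sum_{m\ge0}(\tau_{(m+1)k}-\tau_{mk})\,\1_{\{W_n>\tau_{mk}\}}$ and these two facts,
\[
E[W_n]\ \le\ \sum_{m\ge0}\frac{k}{\beta}\,P(W_n>\tau_{mk})\ \le\ \frac{2k}{\beta}\qquad(n\ge n_1).
\]
For the finitely many values $n<n_1$ one bounds $W_n\le T_n$, where $E[T_n]<\infty$ because $N_n$ is a finite-state chain absorbed at $1$; so $C:=\max\{2k/\beta,\ \max_{2\le n<n_1}E[T_n]\}$ works.

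The step I expect to require the most care is the bookkeeping in the second paragraph: checking that the one-step binomial estimate, and therefore the geometric contraction of $E[N_n(\cdot)]$, remains valid when applied after conditioning on $\mathcal F_{\tau_{mk}}$ and in the presence of the (harmless) intervening small mergers, and arranging the restart so that the factor $\tfrac12$ genuinely compounds. A natural-looking alternative --- showing that $\log N_n(t)+ct$ is a supermartingale up to $W_n$ for a suitable $c>0$ and applying optional stopping --- runs into an overshoot problem at the level $\gamma n$: a single large merger can send $N_n$ far below $\gamma n$, and controlling that excess would require exactly the integrability in condition \eqref{maincond}, which the present lemma does not assume. The renewal argument above circumvents this by working only with first moments.
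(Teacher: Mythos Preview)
Your argument is correct (modulo a trivial point below), but it takes a different and somewhat heavier route than the paper's. The paper does not use the Poisson construction at all here. It simply observes that for each $k\in\{2,\dots,n\}$ the probability that $k$ is still the least element of some block at time $t$ is at most $e^{-\lambda_{2,2}t}$ (just the probability that $1$ and $k$ have not yet merged), so
\[
E[N_n(t)]\le 1+(n-1)e^{-\lambda_{2,2}t}.
\]
Markov's inequality then gives a deterministic $t_0$ with $P(W_n>t_0)\le\tfrac12$ for all $n\ge2$, and the geometric iteration $P(W_n>mt_0)\le2^{-m}$ via the ordinary Markov property yields $E[W_n]\le 2t_0$.

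So both proofs share the skeleton ``bound $E[N_n(\cdot)]$ at a well-chosen time, Markov's inequality, geometric iteration'', but the paper gets the first-moment bound from the pairwise merging rate $\lambda_{2,2}=\Lambda([0,1])$ at a \emph{deterministic} time, whereas you get it from the contraction at big $p$-mergers and the \emph{random} time $\tau_k$. Your approach has the virtue that the ``big mergers contract $N_n$'' mechanism is exactly what drives the later Lemma~\ref{numblocks}; the paper's approach is shorter, needs no Poisson bookkeeping, and avoids the filtration/independence verifications you flag in your last paragraph.

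One small gap: the sentence ``Since $\Lambda\neq0$, fix $p_0\in(0,1)$ with $\beta\in(0,\infty)$'' fails when $\Lambda$ is a pure atom at $0$, and your a fortiori reduction then has nothing to reduce to. Of course in that (Kingman) case the naive bound $E[W_n]\le\sum_{b>\gamma n}\rho_b^{-1}$ already works since $\rho_b=\binom b2\Lambda(\{0\})$, so the fix is one line; but as written the case split is incomplete.
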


\begin{proof}
For $2 \leq k \leq n$, the probability that $k$ is the smallest integer in one of the blocks of $\Pi_n(t)$ is bounded above by the probability that the integers $1$ and $k$ do not merge before time $t$, which is $e^{-\lambda_{2,2} t}$.  Therefore, $$E[N_n(t)] \leq 1 + (n-1) e^{-\lambda_{2,2} t}.$$  Thus, using Markov's Inequality, $$P(W_n > t) = P (N_n(t) > \gamma n ) \leq \frac{E[N_n(t)]}{\gamma n} \leq \frac{1}{\gamma n} + \frac{(n-1) e^{-\lambda_{2,2} t}}{\gamma n}.$$  Because $\lambda_{2,2} = \Lambda([0,1]) > 0$ by assumption, there exists $t_0 > 0$ such that $P(W_n > t_0) \leq 1/2$ for sufficiently large $n$.  By increasing the value of $t_0$ if necessary, we can arrange for this inequality to hold for all $n \geq 2$.  Then by repeatedly applying the Markov property, we get $P(W_n > m t_0) \leq 2^{-m}$ for all positive integers $m$.  It follows that $E[W_n] \leq 2 t_0$ for all $n \geq 2$, which gives the result.
\end{proof}

\begin{Lemma}\label{binbound}
Let $B_{b,p}$ have a binomial distribution with parameters $b$ and $p$.  Then for all $k,x > 0$
\begin{equation}\label{bin1}
P(B_{b,p} \geq b - k) \leq 2p^{\lfloor b/2k \rfloor}
\end{equation}
and
\begin{equation}\label{bin2}
 P(B_{b,p} \geq x) \leq p^x2^b .
\end{equation}
Moreover,
\begin{equation}\label{binexp}
E \bigg[ \frac{1}{B_{b,p} + 1} \bigg] = \frac{1 - (1-p)^{b+1}}{(b+1)p}.
\end{equation}
\end{Lemma}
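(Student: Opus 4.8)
\medskip
\noindent\emph{Proof plan.} I would treat the three assertions separately, in increasing order of difficulty. For \eqref{binexp} the quickest route uses the elementary identity $\frac{1}{j+1}\binom bj=\frac{1}{b+1}\binom{b+1}{j+1}$: writing $E[1/(B_{b,p}+1)]=\sum_{j=0}^b\frac{1}{j+1}\binom bj p^j(1-p)^{b-j}$ and inserting this identity turns the sum into $\frac{1}{(b+1)p}\sum_{i=1}^{b+1}\binom{b+1}{i}p^i(1-p)^{(b+1)-i}$, and the binomial theorem evaluates the remaining sum as $1-(1-p)^{b+1}$. (Equivalently, $\frac{1}{j+1}=\int_0^1 x^j\,dx$ gives $E[1/(B_{b,p}+1)]=\int_0^1(1-p(1-x))^b\,dx$, which the substitution $u=1-p(1-x)$ finishes.)

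For \eqref{bin2} I would argue crudely: in $P(B_{b,p}\ge x)=\sum_{j\ge x}\binom bj p^j(1-p)^{b-j}$ discard each factor $(1-p)^{b-j}\le 1$, use $p^j\le p^x$ for $j\ge x$ (valid since $0\le p\le1$), and bound $\sum_{j\ge x}\binom bj\le\sum_{j=0}^b\binom bj=2^b$.

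The estimate \eqref{bin1} is the one requiring real work. First rewrite the event: $\{B_{b,p}\ge b-k\}$ is exactly the event that at most $k$ of the $b$ independent trials underlying $B_{b,p}$ are failures. If $b<2k$ the right-hand side is $\ge 2$ and there is nothing to prove, so assume $b\ge 2k$ and set $m:=\lfloor b/2k\rfloor\ge1$. The conceptual content is a pigeonhole argument: on $\{B_{b,p}\ge b-k\}$, if $\{1,\dots,b\}$ is split into consecutive blocks, at most $k$ of them can contain a failure, so as soon as there are more than $k$ blocks at least one block of some length $\ell$ is failure-free, an event of probability $p^{\ell}$; alternatively one may replace $(b,p)$ by $(\lfloor b/2\rfloor,p^2)$ by a pairing reduction (recording which pairs of trials are all-success) and iterate, invoking \eqref{bin2} at the end. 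Either way one obtains a bound of the form $Cp^{cb}$ with constants $C,c>0$.

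The main obstacle is to calibrate this so as to land precisely the constant $2$ and the exponent $\lfloor b/2k\rfloor$, uniformly in $p$ and $k$. I expect this to require a case distinction on the size of $p$. When $p$ is close to $1$ — concretely whenever $p^m\ge\tfrac12$, which a short calculation shows includes all $p\ge 1-k/b$, since $(1-k/b)^{b/2k}\ge\tfrac12$ for $b\ge 2k$ — the trivial bound $P(B_{b,p}\ge b-k)\le 1\le 2p^m$ already suffices. When $p$ is bounded away from $1$ in this sense, the quantitative estimate has enough slack: the target exponent $m\approx b/2k$ is only about half of the power of $p$ that the estimate naturally produces, and that surplus absorbs the combinatorial prefactor. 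Stitching the two regimes together cleanly across the boundary is where the care is needed; the floor in $\lfloor b/2k\rfloor$ and the insistence on a $k$-independent constant are exactly what make a na\"ive union bound — which gives only $(k+1)p^{\lfloor b/(k+1)\rfloor}$ — insufficient and force the finer treatment.
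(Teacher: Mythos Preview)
Your treatments of \eqref{binexp} and \eqref{bin2} are fine. For \eqref{binexp} you do exactly what the paper does. For \eqref{bin2} your direct bound is in fact simpler than the paper's route, which passes through an exponential Markov inequality $P(B_{b,p}\ge x)\le e^{-\lambda x}(1+pe^\lambda)^b$ and sets $\lambda=-\log p$; your crude estimate reaches the same endpoint with less machinery.

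For \eqref{bin1}, however, what you have written is a strategy, not a proof. You yourself say that ``stitching the two regimes together cleanly across the boundary is where the care is needed'' and then stop. The pigeonhole and pairing ideas you sketch both produce bounds of the shape $C(k)\,p^{c(b,k)}$, but neither one as stated delivers the uniform constant~$2$ and the precise exponent $\lfloor b/2k\rfloor$; in particular, the observation ``at least one block is failure-free, an event of probability $p^\ell$'' points the inequality in the wrong direction (you need an upper bound on $P(A)$, not on the probability of a superset of $A$). The case distinction on $p$ that you anticipate is an unnecessary complication.

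The paper's argument is short enough to give in full and avoids all case analysis. Write $B_{b,p}=\sum_{i=1}^b\xi_i$ with the $\xi_i$ i.i.d.\ Bernoulli$(p)$, and set $A=\{\sum_i\xi_i\ge b-k\}$. Conditionally on $A$, at most $k$ of the $\xi_i$ equal $0$; by exchangeability each individual $\xi_i$ therefore satisfies $P(\xi_i=0\mid A)\le k/b$. For any $j\le b/(2k)$ a union bound then gives $P(\exists\,i\le j:\xi_i=0\mid A)\le jk/b\le 1/2$, i.e.\ $P(\xi_1=\cdots=\xi_j=1\mid A)\ge 1/2$. Taking $j=\lfloor b/2k\rfloor$ and rearranging yields
\[
P(A)\le 2\,P(\xi_1=\cdots=\xi_j=1)=2p^{\lfloor b/2k\rfloor}.
\]
The factor $2$ is exactly the reciprocal of that conditional probability lower bound, and no separate treatment of large or small $p$ is needed.
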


\begin{proof}
To prove (\ref{bin1}), let $\xi_1, \dots, \xi_b$ be independent random variables with $P(\xi_i = 1) = p$ and $P(\xi_i = 0) = 1-p$.  Observe that $$P \bigg( \bigcup_{i=1}^j \{\xi_i = 0\} \bigg| \sum_{i=1}^b \xi_i \geq b - k \bigg) \leq j P \bigg( \xi_1 = 0 \bigg| \sum_{i=1}^b \xi_i \geq b - k \bigg) \leq \frac{jk}{b}.$$  In particular, if $j \leq b/2k$, then the right-hand side is less than $1/2$ and, taking complements, we get $$P \bigg( \xi_1 = \dots = \xi_j = 1 \bigg| \sum_{i=1}^b \xi_i \geq b - k \bigg) \geq \frac{1}{2}.$$  It follows by taking $j = \lfloor b/2k \rfloor$ that $$P \bigg( \sum_{i=1}^b \xi_i \geq b - k \bigg) \leq 2 P(\xi_1 = \dots = \xi_j = 1) = 2 p^{\lfloor b/2k \rfloor},$$ which gives (\ref{bin1}).  

To show \eqref{bin2} we obtain from an exponential Markov inequality that
\begin{equation}
P(B_{b,p} \geq x) \leq e^{-\lambda x} (1+pe^{\lambda})^b 
\end{equation}
with $\lambda >0$. Putting $\lambda = -\log p$ the inequality follows.

Finally, we have
$$E \bigg[ \frac{1}{B_{b,p} + 1} \bigg] = \sum_{k=0}^b \frac{1}{k+1} \binom{b}{k} p^k (1-p)^{b-k} = \frac{1}{(b+1)p} \sum_{k=0}^b \binom{b+1}{k+1} p^{k+1}(1-p)^{b-k},$$
which equals the right-hand side of (\ref{binexp}).
\end{proof}

Theorem \ref{tight} is an immediate consequence of Proposition \ref{mKmprop} below when $m = 1$. (We state this proposition in a more general form, which we will use in the proof of Theorem \ref{limitdist}.)

\begin{Prop}\label{mKmprop}
Suppose that (\ref{maincond}) holds.  Then for all $\eps > 0$, there exists a positive integer $K_{\eps}$ such that $P(m < N_n(t) \leq K_{\eps}m \mbox{ for some }t \geq 0) > 1 - \eps$ for all integers $m$ and $n$ such that $1 \leq m < n$.
\end{Prop}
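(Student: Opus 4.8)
The plan is to bound the probability that the monotone process $N_n$ \emph{overshoots} the window $(m,K_\eps m]$, i.e.\ that at some merger it jumps from a value strictly above $K_\eps m$ directly down to a value $\le m$; call such a merger \emph{bad}. Since $N_n$ is non‑increasing with $N_n(0)=n$ and $N_n(t)\to1$, if $n>K_\eps m$ (the only nontrivial case) and no bad merger occurs, then at the first time $N_n$ drops to a level $\le K_\eps m$ it must land in $(m,K_\eps m]$; so it suffices to produce, for given $\eps$, an integer $K_\eps$ with $P(\text{some bad merger occurs})<\eps$ for all $1\le m<n$, and I will do this by estimating the expected \emph{number} of bad mergers and applying Markov's inequality. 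One may first assume $\Lambda(\{0\})=0$: a bad merger decreases $N_n$ by more than $(K_\eps-1)m\ge1$, hence by at least $2$ once $K_\eps\ge2$, so it is never one of the binary Kingman mergers produced by an atom of $\Lambda$ at $0$; that atom can therefore be superposed in separately without affecting the count, and it contributes nothing to $\int_0^1|\log(1-p)|\,\Lambda(dp)$.

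With $\Lambda(\{0\})=0$ I would use the Poisson construction $\Psi$. Conditionally on $N_n(t-)=b$, a point of $\Psi$ at $(t,p,\cdot)$ merges $B_{b,p}\sim\mathrm{Bin}(b,p)$ of the current blocks, independently of the past, and the merger is bad exactly when $b>K_\eps m$ and $B_{b,p}\ge b-m+1$. By the Mecke equation (a compensator computation),
\[
E[\#\text{bad mergers}]=\int_0^\infty\!\!\int_0^1 p^{-2}\,E\Big[\mathbf{1}_{\{N_n(t-)>K_\eps m\}}\,P\big(B_{b,p}\ge b-m+1\big)\big|_{b=N_n(t-)}\Big]\Lambda(dp)\,dt .
\]
Estimating $P(B_{b,p}\ge b-m+1)\le P(B_{b,p}\ge b-m)\le 2p^{\lfloor b/2m\rfloor}$ by \eqref{bin1} and breaking the sum over $b$ into dyadic blocks $b\in(2^jm,2^{j+1}m]$, on which $\lfloor b/2m\rfloor\ge 2^{j-1}$, Tonelli shows the contribution of the $j$-th block is at most $C_1\int_0^1 2\,p^{\,2^{j-1}-2}\,\Lambda(dp)$, where $C_1=C_1(\Lambda)$ bounds, uniformly in $j$, $m$ and $n$, the expected time $N_n$ spends in $(2^jm,2^{j+1}m]$. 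The latter bound comes from the strong Markov property applied at the first time $N_n$ drops to a level $\le 2^{j+1}m$, followed by Lemma~\ref{ESn} with $\gamma=\tfrac12$. Summing over the (finitely many, for fixed $n$) relevant $j$,
\[
E[\#\text{bad mergers}]\ \le\ 2C_1\!\!\sum_{j\,:\,2^{j+1}>K_\eps}\ \int_0^1 p^{\,2^{j-1}-2}\,\Lambda(dp).
\]

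The crux, and the place where \eqref{maincond} enters, is to show that this tail is small for large $K_\eps$. For $p$ bounded away from $1$ the summands decay super‑geometrically in $j$, so the only issue is $p\uparrow1$, where $\sum_{j\ge1}p^{\,2^{j-1}}=\sum_{j\ge1}e^{-2^{j-1}|\log p|}$ is of order $\log\!\big(1/|\log p|\big)\le\log\!\big(1/(1-p)\big)$ (the $\asymp\log(1/|\log p|)$ terms with $2^{j-1}\lesssim 1/|\log p|$ are each $\asymp1$, and the rest sum to $O(1)$). Since $p^{-2}$ is bounded on $[1-e^{-1},1]$, this yields $\sum_{j\ge2}\int_0^1 p^{\,2^{j-1}-2}\,\Lambda(dp)\lesssim\Lambda([0,1])+\int_0^1|\log(1-p)|\,\Lambda(dp)<\infty$ by \eqref{maincond}; taking $K_\eps\ge4$ large enough (so the sum starts at some $j\ge2$, where the $p^{-2}$ singularity at $0$ is harmless) makes the tail, hence $P(\text{some bad merger occurs})$, smaller than $\eps$, which proves Proposition~\ref{mKmprop}. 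Theorem~\ref{tight} is the case $m=1$. The main obstacle is exactly the uniformity in $n$: a crude estimate of the occupation time of $\{N_n>K_\eps m\}$ would grow like $\log(n/m)$, and it is the dyadic splitting — in which the binomial bound decays fast enough to beat the uniformly bounded per‑block occupation times, the resulting series converging precisely under \eqref{maincond} — that removes this dependence.
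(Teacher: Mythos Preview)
Your proposal is correct and follows essentially the same route as the paper: bound the probability of a ``bad'' overshoot by the expected number of such mergers, split the range of the block count $b>K_\eps m$ into dyadic blocks, apply the binomial bound \eqref{bin1} on each, control the occupation time of each block by Lemma~\ref{ESn}, and observe that the resulting series converges precisely under \eqref{maincond}. The only notable differences are cosmetic: the paper indexes its dyadic blocks as $(2^\ell K m,2^{\ell+1}Km]$ so that the exponent $2^\ell(K/2)-3$ carries the dependence on $K$ (allowing a clean dominated-convergence finish via the explicit inequality $\sum_{\ell\ge0} x^{2^\ell a}\le 2|\log(1-x^a)|$), whereas you fix the blocks $(2^j m,2^{j+1}m]$ and take the \emph{tail} $2^{j+1}>K_\eps$ of a single convergent series; and you handle an atom $\Lambda(\{0\})>0$ explicitly, while the paper silently works under $\Lambda(\{0\})=0$ from the Poisson construction.
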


\begin{proof}
For $K\ge 2$, let $A_{m,n}$ be the complement of the event that $m < N_n(t) \leq Km \mbox{ for some }t \geq 0$.  If $A_{m,n}$ occurs, then for some nonnegative integer $\ell$, a single merger takes the coalescent from between $2^{\ell}Km + 1$ and $2^{\ell+1}Km$ blocks down to $m$ blocks or fewer. 

Suppose there are $b$ blocks in the $\Lambda$-coalescent at some time, where $b \geq 2^{\ell}Km + 1$, and then a $p$-merger occurs.  For the $p$-merger to take the coalescent down to $m$ blocks or fewer, the number of blocks that participate in the merger must be at least $b - m + 1$.  By (\ref{bin1}), if $m \geq 2$, then the probability that this occurs is bounded above by $$2 p^{\lfloor b/2(m-1) \rfloor} \leq 2p^{\lfloor (2^{\ell} Km + 1)/(2(m-1)) \rfloor} \leq 2p^{\lfloor 2^{\ell}(K/2)\rfloor} \leq 2p^{2^{\ell}(K/2) - 1}.$$  If $m = 1$, this probability is bounded above by $p^b \le 2 p^{2^{\ell}(K/2) - 1}$.  Because, from the Poisson process construction of the $\Lambda$-coalescent, we know that $p$-mergers take place at rate $p^{-2} \: \Lambda(dp)$, it follows that the rate of events that take the coalescent down to $m$ blocks or fewer is bounded above by $$2 \int_0^1 p^{2^{\ell}(K/2) - 3} \: \Lambda(dp).$$  By Lemma \ref{ESn}, the expected amount of time for which the number of blocks is between $2^{\ell}Km + 1$ and $2^{\ell+1}Km$ is bounded above by $C$ for all $\ell$.  Therefore,
\begin{align*}
P(A_{m,n})& \leq \sum_{\ell = 0}^{\infty} 2C \int_0^1 p^{2^{\ell}(K/2) - 3} \: \Lambda(dp) \\
&= 2C \int_0^1 \sum_{\ell = 0}^{\infty} p^{2^{\ell}(K/2) - 3} \: \Lambda(dp) \\
&\leq 2C \int_0^1 \sum_{\ell=0}^{\infty} p^{2^{\ell}((K/2) - 3)} \: \Lambda(dp).
\end{align*}
For any $a>0$ and any $x \in (0,1)$, we have $$\sum_{\ell = 0}^{\infty} x^{2^{\ell} a} = x^a + \sum_{\ell = 1}^{\infty} \sum_{j = 2^{\ell - 1}+1}^{2^{\ell}} \frac{x^{2^{\ell} a}}{2^{\ell - 1}} \leq x^a + \sum_{\ell = 1}^{\infty} \sum_{j = 2^{\ell - 1}+1}^{2^{\ell}} \frac{2 x^{ja}}{j} = 2 \sum_{j=1}^{\infty} \frac{x^{ja}}{j} = 2|\log (1 - x^a)|.$$  
Therefore, if $1 \leq m < n$, then for $K > 6$
$$P(A_{m,n}) \leq 4C \int_0^1 |\log (1 - p^{(K/2)-3})| \: \Lambda(dp).$$  
It follows from (\ref{maincond}) and the Dominated Convergence Theorem that this expression tends to zero as $K \rightarrow \infty$, which gives the result.
\end{proof}

\section{An approximation in the case of dust}

Condition \eqref{dustcond} allows us to approximate the number of blocks in the $\Lambda$-coalescent by a subordinator.  For this, we will use the construction of the $\Lambda$-coalescent from the Poisson point process $\Psi$ introduced at the beginning of Section \ref{sec2}.  Let $\phi: (0, \infty) \times (0, 1] \times [0, 1]^n \rightarrow (0, \infty) \times (0, \infty]$ be the function defined by $$\phi(t, p, u_1, \dots, u_n) = (t, -\log(1-p)).$$  Now $\phi(\Psi)$ is a Poisson point process, and we can define a pure jump subordinator $(S(t), t \geq 0)$ having the property that $S(0) = 0$ and, if $(t, x)$ is a point of $\phi(\Psi)$, then $S(t) = S(t-) + x$.  This subordinator first appeared in the work of Pitman \cite{pit99} and was used to approximate the block-counting process by  Gnedin et al. \cite{gim11} and M\"ohle \cite{mohle10}. The next theorem 
provides a refinement.

Define
\begin{align} \label{defb}
f(y):= \int_0^1 \frac{1-(1-p)^{e^y}}{e^y} \frac {\Lambda(dp)}{p^2} , \quad y \in \mathbb R.
\end{align}
From \eqref{dustcond}, we see that $f(y)$ is finite for all $y \in \mathbb R$. Also  $f$  is decreasing with $\lim_{y\to \infty} f(y) = 0$, because for fixed $p$ the integrand has this behaviour. Let $Y_n = (Y_n(t))_{t\ge 0}$ be the solution of the SDE
\begin{align} 
\log n -S(t) = Y_n(t)-\int_0^tf(Y_n(s))ds, \quad t\ge 0. \label{SDE1}
\end{align}
Our goal is to show that for coalescents with dust the   log of the block-counting process  follows closely the process $Y_n$, up to the time when $N_n$ has nearly reached the state 1.  The drift $f(Y_n(t))\, dt$ appears because a merging of $b$ out of $N_n(t)$ lines results in a decrease by $b-1$ and not  by $b$ lines, see equation \eqref{intuition} below. For this purpose, we define for any $k>1$
\begin{align}\label{taukndef}
\tau_{k,n} := \inf\{ t \ge 0:  N_n(t) < k\} .
\end{align}
\begin{Theo} \label{couplingTh}
Under assumption \eqref{dustcond}, for all $\eps > 0$ there is an integer $k\ge 2$ such that for all~$n$,
\begin{align}\label{couplingY}
P \bigg( \sup_{ t \in [0,\tau_{k,n}]\cap[0,T_n) } \big| \log N_n(t) - Y_n(t) \big| \le \eps \bigg) > 1 - \eps.
\end{align}
\end{Theo}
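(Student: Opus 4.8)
The plan is to realise $N_n$ and $Y_n$ on the same Poisson point process $\Psi$ and to control their log-difference by a Doob-type argument, stopping the comparison at the first time it would leave a small interval or $N_n$ falls below $k$. First I would record how both processes evolve along $\Psi$: at a point $(t,p,u_1,\dots,u_n)$ with $N_n(t-)=b$, the number of participating blocks is $B=\#\{i\le b:u_i\le p\}$, which is ${\rm Bin}(b,p)$ conditionally on $\mathcal F_{t-}$; at this point $\log N_n$ jumps by $\log\frac{b-B+1}{b}\,\mathbf{1}_{\{B\ge 2\}}$, while $S$ jumps by $-\log(1-p)$ and hence $Y_n$ jumps by $\log(1-p)$; between the points $\log N_n$ is flat and $\dot Y_n=f(Y_n)$. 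Thus $D_n(t):=\log N_n(t)-Y_n(t)$ satisfies $D_n(0)=0$ and
\[
D_n(t)=\sum_{(s,p,u)\in\Psi,\ s\le t}\Big(\log\tfrac{N_n(s-)-B_s+1}{N_n(s-)}\mathbf{1}_{\{B_s\ge 2\}}-\log(1-p)\Big)-\int_0^t f(Y_n(s))\,ds .
\]

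The analytic core is that the predictable compensator $\int_0^t\gamma(N_n(s-))\,ds$ of the sum has $\gamma$ close to $f(\log\cdot)$. I would derive this from the elementary identity
\[
E\Big[\tfrac{N_n(t)}{N_n(t-)}\;\Big|\;N_n(t-)=b,\ \text{a }p\text{-merger at }t\Big]=(1-p)+\frac{1-(1-p)^b}{b},
\]
whose deviation from $(1-p)$ integrates against $p^{-2}\Lambda(dp)$ to exactly $f(\log b)$ by \eqref{defb}; a second-order Taylor expansion of the logarithm, the moment identity \eqref{binexp}, and the tail bounds \eqref{bin1}--\eqref{bin2} for the deviations of $B$ from $bp$ and for the event $B\le 1$ then give $|\gamma(b)-f(\log b)|\le C/b$ once the mass of $\Lambda$ near $1$ is split off; the contribution of that mass to $\gamma$ (and, below, to the bracket of the martingale part) is deferred to the bad event. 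With the stopping time $\sigma:=\tau_{k,n}\wedge T_n\wedge\inf\{t:|D_n(t)|\ge\eps/2\}$ one obtains $D_n(t\wedge\sigma)=M_n(t\wedge\sigma)+A_n(t\wedge\sigma)$ with $M_n$ a martingale whose predictable bracket is controlled in the same way and $|A_n(t\wedge\sigma)|\le C\int_0^{t\wedge\sigma}N_n(s)^{-1}\,ds$.

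Three estimates finish the argument. (a) Since $W_n=\inf\{t:N_n(t)\le\gamma n\}$ has expectation at most a constant by Lemma~\ref{ESn}, iterating the strong Markov property over the $O(1)$-length phases on which $N_n$ roughly halves yields $E\big[\int_0^{\tau_{k,n}\wedge T_n}N_n(s)^{-1}\,ds\big]\le C'/k$; by Markov's inequality the drift $A_n$ stays below $\eps/3$ off an event of probability $<\eps/3$ once $k$ is large. (b) The same slicing applied to the $O(1/b)$-bound on the bracket of $M_n$ shows $E[\langle M_n\rangle_\sigma]$ is small for $k$ large, and Doob's $L^2$-inequality then controls $\sup_{t}|M_n(t\wedge\sigma)|$. (c) On the complement of the events in (a) and (b) we have $\sup_{[0,\sigma)}|D_n|\le\eps/2$, so the assertion can only fail at the single jump of $D_n$ sitting at the right endpoint $\tau_{k,n}\wedge T_n$ when this point is a genuine jump of $N_n$ lying in the observation window (i.e.\ when $\tau_{k,n}<T_n$). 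I would bound the probability that this jump exceeds $\eps/2$ by first fixing $\ell_0=\ell_0(\eps)$ with $\log\frac{\ell_0}{\ell_0-1}<\eps/2$, observing via \eqref{bin1} that a merger taking $N_n$ from $\ge k$ down to $\le\ell_0$ must have either an extremely imbalanced binomial or a $p$ pathologically close to $1$, and then using Lemma~\ref{ESn} to show that $N_n(\tau_{k,n}-)$ is unlikely to be much larger than $k$; choosing $k\gg\ell_0$ makes this probability $<\eps/3$ uniformly in $n$.

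The main obstacle is exactly part (c), namely taming the behaviour near $p=1$ in the absence of condition \eqref{maincond}. Unlike in the subordinator approximation of Gnedin, Iksanov and Marynych recalled before Theorem~\ref{couplingTh}, one cannot compensate $S$ globally, since under \eqref{dustcond} alone $\int_0^1|\log(1-p)|\,\Lambda(dp)$ may be infinite; the whole comparison therefore has to be localised at $\tau_{k,n}$, and one must argue that the mergers which are badly approximated on the logarithmic scale are precisely those that bring $N_n$ (nearly) to $1$---and hence do not enter the supremum over $[0,\tau_{k,n})\cap[0,T_n)$---while the one such merger that can sit at the endpoint $\tau_{k,n}$ is controlled both by the tail bounds of Lemma~\ref{binbound} and by the fact that $N_n$ is very unlikely to reach $1$ from a value much above $k$ in a single step. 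Making all the quantifiers ($\ell_0$ first, then $k$) uniform in $n$ is where the care is needed; the remainder is bookkeeping with Lemmas~\ref{ESn} and \ref{binbound}.
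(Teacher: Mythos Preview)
Your strategy is the paper's: realize both processes on $\Psi$, decompose the discrepancy as martingale plus drift, and bound each by $E\big[\int_0^{\tau_{k,n}}N_n(s)^{-1}\,ds\big]=O(1/k)$. The gap is in the drift. With $M_n$ the compensated jump sum, one has $A_n(t)=\int_0^t\gamma(N_n(s-))\,ds-\int_0^t f(Y_n(s))\,ds$, and you assert $|A_n(t\wedge\sigma)|\le C\int_0^{t\wedge\sigma}N_n(s)^{-1}\,ds$. But your computation establishes only $|\gamma(b)-f(\log b)|\le C/b$, which controls $\int[\gamma(N_n)-f(\log N_n)]$; the leftover piece $\int_0^{t}[f(\log N_n(s))-f(Y_n(s))]\,ds$ depends on $D_n$ itself and is never addressed. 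Knowing $|D_n|\le\eps/2$ on $[0,\sigma)$ does not give $|f(\log N_n)-f(Y_n)|\le C/N_n$, and a Gronwall argument would need $\int_0^{\tau_{k,n}}\sup_{|x-\log N_n(s)|\le\eps/2}|f'(x)|\,ds$ to be small uniformly in $n$; under \eqref{dustcond} alone this can fail (when $\Lambda$ has sufficient mass near $0$, $|f'(y)|$ need not decay like $e^{-y}$, and the integral can grow with $n$). The circularity is genuine and does not disappear into the bookkeeping.

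The paper avoids it entirely. It first proves, essentially by your steps (a) and (b), that
\[
\Delta_n(t):=\log N_n(t)-\int_0^t f(\log N_n(s))\,ds-(\log n-S(t))
\]
is uniformly small on $[0,\tau_{k,n}]\cap[0,T_n)$; here the drift is $f(\log N_n)$, so there is no feedback. Only then does it pass to $Y_n$, using nothing about $f$ except that it is nonincreasing: if $\log N_n(t)\ge Y_n(t)$, look at the last time $\sigma_t\le t$ with $\log N_n\le Y_n$; for $s\in(\sigma_t,t]$ one has $f(\log N_n(s))\le f(Y_n(s))$, whence $0\le \log N_n(t)-Y_n(t)\le\Delta_n(t)-\Delta_n(\sigma_t-)\le 2\sup|\Delta_n|$, and symmetrically for the other sign. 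This pathwise monotonicity argument is the missing step in your proposal. As a secondary remark, your part~(c) is also off target: once Doob's $L^2$ inequality bounds $\sup_t|M_n(t\wedge\sigma)|$ via $E[\langle M_n\rangle_\sigma]$, the value $M_n(\sigma)$---including any jump at $\sigma$---is already covered, so on the good event $\sigma=\tau_{k,n}\wedge T_n$ and no separate endpoint analysis is needed beyond the deferral of mergers with $p$ extremely close to $1$ (which you already mention).
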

Note that \eqref{couplingY} controls the distance between $Y_n$ and $\log N_n$ up to the first time point when $N_n$ jumps below $k$. This time point is excluded only if the jump leads directly to $1$, i.e. on the event $\{\tau_{k,n} = T_n\}$.

Before proving this theorem let us derive some consequences.
\begin{Cor}
Under assumption \eqref{dustcond}, for all $\eps > 0$ there is an integer $\ell$ such that
\begin{align}\label{couplingY1}
P \bigg( \sup_{0\le t <T_n } \big| \log N_n(t) - Y_n(t) \big| \le \ell \bigg) > 1 - \eps.
\end{align}
\end{Cor}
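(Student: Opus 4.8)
The plan is to bootstrap from Theorem~\ref{couplingTh}: that result already controls $|\log N_n(t) - Y_n(t)|$ for $t\in[0,\tau_{k,n}]\cap[0,T_n)$, so the only thing left is the short final window $[\tau_{k,n},T_n)$ on the event $\{\tau_{k,n}<T_n\}$. First I would fix $\eps>0$ and apply Theorem~\ref{couplingTh} with $\min\{1,\eps/5\}$ in place of $\eps$, obtaining an integer $k$ (take $k\ge 3$) and an event $E_1$ of probability $>1-\eps/5$ on which $\sup_{t\in[0,\tau_{k,n}]\cap[0,T_n)}|\log N_n(t)-Y_n(t)|\le 1$ for every $n$; in particular, evaluating the supremum at $\tau_{k,n}$ on $\{\tau_{k,n}<T_n\}$ confines $Y_n(\tau_{k,n})$ to $[\log 2-1,\log(k-1)+1]$. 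On the window $[\tau_{k,n},T_n)$ the block count $N_n$ is non-increasing with values in $\{2,\dots,k-1\}$, so $\log N_n(t)\in[\log 2,\log(k-1)]$, and the whole task reduces to showing that, with probability close to $1$ and uniformly in $n$, $Y_n$ stays in a bounded range on this window.

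For the lower bound on $Y_n$ there, I would rewrite \eqref{SDE1} from time $\tau_{k,n}$ on as $Y_n(t)=Y_n(\tau_{k,n})-(S(t)-S(\tau_{k,n}))+\int_{\tau_{k,n}}^t f(Y_n(s))\,ds$ and discard the nonnegative drift, so $Y_n(t)\ge Y_n(\tau_{k,n})-(S(t)-S(\tau_{k,n}))$; it then suffices to bound the increment of the subordinator over $[\tau_{k,n},T_n)$, which I would split by jump size at a level $\ell_0$. For the large jumps: a jump of $S$ of size $\ge\ell_0$ comes from a point of $\Psi$ with $p\ge 1-e^{-\ell_0}$, and if it falls at a time when $N_n$ equals some $b\le k-1$, then all $b$ blocks merge with probability $p^b\ge(1-e^{-\ell_0})^{k-1}$, which sends $N_n$ to $1$ and makes that time equal to $T_n$. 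Applying this to the first $S$-jump of size $\ge\ell_0$ after $\tau_{k,n}$, together with the strong Markov property of $\Psi$ at that stopping time, shows that the probability that $S$ has any jump of size $\ge\ell_0$ strictly before $T_n$ is at most $1-(1-e^{-\ell_0})^{k-1}$, which I make $<\eps/5$ by choosing $\ell_0$ large. For the remaining (small) jumps: let $S^{<\ell_0}$ be $S$ with its jumps of size $\ge\ell_0$ removed; its L\'evy measure is the image under $p\mapsto-\log(1-p)$ of the restriction of $p^{-2}\Lambda(dp)$ to $\{p<1-e^{-\ell_0}\}$, and since $-\log(1-p)\sim p$ as $p\to 0$, assumption \eqref{dustcond} gives $E[S^{<\ell_0}(1)]<\infty$. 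Combined with a uniform-in-$n$ bound on the window length (the total jump rate out of any state $b\in\{2,\dots,k-1\}$ is at least $\rho_{b1}=\lambda_{b,b}>0$ and at most $k-2$ jumps remain, so $T_n-\tau_{k,n}$ is stochastically dominated by a fixed sum of exponentials; let $E_3=\{T_n-\tau_{k,n}\le A\}$ have probability $>1-\eps/5$), the strong Markov property at $\tau_{k,n}$ gives, on $E_3$, $S^{<\ell_0}(T_n-)-S^{<\ell_0}(\tau_{k,n})\le S^{<\ell_0}(\tau_{k,n}+A)-S^{<\ell_0}(\tau_{k,n})$, a quantity whose law does not depend on $n$ and which is a.s.\ finite; choosing $B$ large makes it $\le B$ off an event of probability $<\eps/5$. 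On the intersection of these good events, the increment of $S$ over $[\tau_{k,n},T_n)$ coincides with its $S^{<\ell_0}$-part and is at most $B$, hence $Y_n(t)\ge\log 2-1-B$ on the window.

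Feeding this back into the drift, since $f$ is decreasing we get $f(Y_n(s))\le f(\log 2-1-B)=:M$ on the window, so $Y_n(t)\le Y_n(\tau_{k,n})+M(T_n-\tau_{k,n})\le\log(k-1)+1+MA$ on the good events. Thus $|\log N_n(t)-Y_n(t)|$ is bounded on all of $[0,T_n)$ by an explicit constant $C=C(\eps)$ independent of $n$, and taking $\ell=\lceil C\rceil$ (and a union bound over the finitely many exceptional events, of total probability $<\eps$) proves the corollary. I expect the main obstacle to be exactly the lower bound on $Y_n$: a single jump of the driving subordinator $S$ can push $Y_n$ down by an arbitrarily large amount while $\log N_n$ stays trapped in $[\log 2,\log(k-1)]$, so the coupling could in principle fail badly on the last window; the point that rescues it is that such a large jump of $S$ almost surely merges all remaining blocks simultaneously and therefore ends the window, so conditioning on still being inside $[0,T_n)$ effectively forbids large jumps of $S$.
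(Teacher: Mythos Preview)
Your argument is correct, but it takes a more elaborate route than the paper does. Both proofs invoke Theorem~\ref{couplingTh} and then control the residual window $[\tau_{k,n},T_n)$ on the event $\{\tau_{k,n}<T_n\}$. The paper bounds $|Y_n(t)-Y_n(\tau_{k,n})|$ on this window directly by
\[
S(T_n)-S(\tau_{k,n})\;+\;f\bigl(S(\tau_{k,n})-S(T_n)-\eps\bigr)\,(T_n-\tau_{k,n}),
\]
and then observes, via the strong Markov property at $\tau_{k,n}$ together with the monotone coupling of block-counting processes started from different levels, that the pair $\bigl(T_n-\tau_{k,n},\,S(T_n)-S(\tau_{k,n})\bigr)$ is stochastically dominated, uniformly in $n$, by the fixed pair $(T_k,\,S(T_k))$. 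Since $T_k$ and $S(T_k)$ are a.s.\ finite random variables not depending on $n$, this yields the stochastic bound in one step, with no need to split the subordinator by jump size or to invoke the dust condition~\eqref{dustcond} again.

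Your decomposition into large and small jumps works and in fact isolates a pleasant mechanism---a large $S$-jump occurring inside the window would, with high probability, merge all remaining blocks and hence \emph{be} the jump at $T_n$ rather than precede it---but this extra machinery (the thinning argument for large jumps, the finite-mean bound on the truncated subordinator) is unnecessary once one notices the domination by $S(T_k)$. The paper's approach is shorter and uses only standard coupling; yours gives a more explicit picture of why the final window cannot spoil the approximation.
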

\begin{proof}
For $\tau_{k,n}< t < T_n$ and $|\log N_n(\tau_{k,n})-Y_n(\tau_{k,n})| \le \eps$ we have, since $f(x) \ge 0$,
\[Y_n(t) \ge S(\tau_{k,n})-S(t)+Y_n(\tau_{k,n}) \ge S(\tau_{k,n})- S(T_n) -\eps. \]
Hence, since $f$ is decreasing, 
\begin{align*}  |Y_n(t)-Y_n(\tau_{k,n})| &\le S(T_n)-S(\tau_{k,n}) + \int_{\tau_{k,n}}^{T_n} f(Y_n(s))\, ds\\
&\le S(T_n)-S(\tau_{k,n}) + f\big(S(\tau_{k,n})- S(T_n) -\eps\big)(T_n-\tau_{k,n}) 
\end{align*}
and therefore
\begin{align*}
|\log N_n(t)- Y_n(t)| &\le |\log N_n(t)- \log N_n(\tau_{k,n})| + |\log N_n(\tau_{k,n})- Y_n(\tau_{k,n})|+ |Y_n(\tau_{k,n})-Y_n(t)| \\
& \le \log k + \eps +  S(T_n)-S(\tau_{k,n}) + f\big(S(\tau_{k,n})- S(T_n) -\eps\big)(T_n-\tau_{k,n}).
\end{align*}
By the strong Markov property, $T_n-\tau_{k,n}$ is stochastically bounded from above by $T_k$ and similarly $S(T_n)-S(\tau_{k,n})$ by $S(T_k)$. Therefore $\sup_{\tau_{k,n}< t < T_n} |\log N_n(t)- Y_n(t)|$ is stochastically bounded on the event $|\log N_n(\tau_{k,n})-Y_n(\tau_{k,n})| \le \eps$. The claim now follows from Theorem \ref{couplingTh}.
\end{proof}

Since $f(x)\to 0$ for $x \to \infty$, the processes $Y_n$ and $\log n-S$ are in view of \eqref{SDE1} close to each other, and one may wonder whether also $\log n-S$ is suitable to approximate the log of the block-counting process. This works under a stronger condition.

\begin{Cor} \label{ApproxS}
Under the assumption 
\begin{align} \int_0^1| \log p| \, \frac {\Lambda (dp)}p < \infty,
\label{Gnd}
\end{align} 
for all $\eps > 0$ there is an integer $k\ge 2$ such that for all~$n$,
\begin{align}\label{couplingY2}
P \bigg( \sup_{ t \in [0,\tau_{k,n}]\cap[0,T_n) } \big| \log N_n(t) -\log n+ S(t) \big| \le \eps \bigg) > 1 - \eps.
\end{align}
\end{Cor}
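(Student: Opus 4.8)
The plan is to reduce the statement to Theorem~\ref{couplingTh} and then estimate the drift term in \eqref{SDE1}. From \eqref{SDE1} and the nonnegativity of $f$ (see \eqref{defb}) one has, for every $t\ge0$,
\[ Y_n(t)-\log n+S(t)=\int_0^t f(Y_n(s))\,ds\;\ge\;0, \]
and the right-hand side is nondecreasing in $t$. Since Theorem~\ref{couplingTh} already controls $|\log N_n-Y_n|$ on $[0,\tau_{k,n}]\cap[0,T_n)$, the corollary will follow once I show that $\int_0^{\tau_{k,n}} f(Y_n(s))\,ds$ can be made small in probability, uniformly in $n$, by taking $k$ large; here I use that $\tau_{k,n}\le T_n$, so the time window coincides with $[0,\tau_{k,n})$ up to its right endpoint, and that $N_n(s)\ge k$ for $s<\tau_{k,n}$. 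So, fixing $\eps>0$, I would first apply Theorem~\ref{couplingTh} with parameter $\eps/3$ to obtain $k_0\ge2$ such that, for every $k\ge k_0$ and every $n$, the event $E_{k,n}=\{\sup_{t\in[0,\tau_{k,n}]\cap[0,T_n)}|\log N_n(t)-Y_n(t)|\le\eps/3\}$ has probability greater than $1-\eps/3$ (enlarging $k$ beyond $k_0$ only shrinks the time window, hence only raises this probability). On $E_{k,n}$ we have $Y_n(s)\ge\log N_n(s)-\eps/3\ge\log k-\eps/3$ for $s<\tau_{k,n}$, so, since $f$ is decreasing,
\[ \int_0^{\tau_{k,n}} f(Y_n(s))\,ds\;\le\;\int_0^{\tau_{k,n}} f\bigl(\log N_n(s)-\eps/3\bigr)\,ds\;=:\;Z_{k,n}\qquad\text{on }E_{k,n}. \]

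To control $E[Z_{k,n}]$ I would split the range $[k,\infty)$ into dyadic blocks $[2^jk,2^{j+1}k)$: on the block containing $N_n(s)$ the integrand is at most $f(\log(2^jk)-\eps/3)$ by monotonicity, while the expected total time that $N_n$ spends in any block of the form $[2^jk,2^{j+1}k)$ is bounded by a constant $C$, uniformly in $j$ and $n$ --- this is exactly the estimate deduced from Lemma~\ref{ESn} in the proof of Proposition~\ref{mKmprop}. Hence
\[ E[Z_{k,n}]\;\le\;C\sum_{j\ge0}f\bigl(\log(2^jk)-\eps/3\bigr)\;\le\;C\Bigl(f(\log k-\eps/3)+\tfrac1{\log2}\int_{\log k-\eps/3}^\infty f(y)\,dy\Bigr), \]
comparing the monotone tail sum with an integral. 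The crucial analytic input is $\int_0^\infty f(y)\,dy<\infty$ under hypothesis \eqref{Gnd}. To see it I would substitute $z=e^y$ in \eqref{defb} and use Tonelli, obtaining $\int_0^\infty f(y)\,dy=\int_0^1 p^{-2}\bigl(\int_1^\infty z^{-2}\bigl(1-(1-p)^z\bigr)\,dz\bigr)\,\Lambda(dp)$; the elementary bound $1-(1-p)^z\le\min\bigl(1,z|\log(1-p)|\bigr)$ makes the inner integral $O\bigl(p(1+|\log p|)\bigr)$ as $p\to0$ and $O(1)$ as $p\to1$, so the whole expression is dominated by a constant multiple of $\int_0^1 p^{-1}|\log p|\,\Lambda(dp)+\Lambda([0,1])$, which is finite by \eqref{Gnd}. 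Since moreover $f(y)\to0$ as $y\to\infty$, it follows that $E[Z_{k,n}]\to0$ as $k\to\infty$, uniformly in $n$.

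Finally I would pick $k\ge k_0$ with $E[Z_{k,n}]<\eps^2/9$ for all $n$, so that $P(Z_{k,n}>\eps/3)<\eps/3$ by Markov's inequality. On $E_{k,n}\cap\{Z_{k,n}\le\eps/3\}$, an event of probability greater than $1-\eps$, the triangle inequality together with $Y_n(t)-\log n+S(t)=\int_0^t f(Y_n(s))\,ds$ and $t\le\tau_{k,n}$ gives, for every $t\in[0,\tau_{k,n}]\cap[0,T_n)$,
\[ \bigl|\log N_n(t)-\log n+S(t)\bigr|\;\le\;\bigl|\log N_n(t)-Y_n(t)\bigr|+\int_0^t f(Y_n(s))\,ds\;\le\;\tfrac\eps3+Z_{k,n}\;\le\;\eps, \]
which is \eqref{couplingY2}. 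The step I expect to be the main obstacle is the integrability computation $\int_0^\infty f<\infty$: this is precisely where the stronger hypothesis \eqref{Gnd} (rather than the dust condition \eqref{dustcond}) enters, and the requirement of bounds uniform in $n$ is what forces the dyadic bookkeeping --- the naive estimate $Z_{k,n}\le f(\log k-\eps/3)\,\tau_{k,n}$ is useless because $E[\tau_{k,n}]$ typically grows like $\log n$.
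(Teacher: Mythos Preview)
Your proposal is correct and follows essentially the same approach as the paper's own proof: reduce to Theorem~\ref{couplingTh}, bound the drift $\int_0^{\tau_{k,n}} f(Y_n(s))\,ds$ via a dyadic time decomposition together with Lemma~\ref{ESn}, and show $\int_0^\infty f(y)\,dy<\infty$ from \eqref{Gnd}. The only differences are cosmetic---the paper uses the bound $1-(1-p)^z\le pz\wedge1$ and splits the $y$-integral at $|\log p|$, whereas you substitute $z=e^y$ and use $1-(1-p)^z\le z|\log(1-p)|\wedge1$; both yield the same $O(p|\log p|)$ estimate near $p=0$.
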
 

\begin{proof}
For $z \ge 1$ we have $1-(1-p)^z \le pz\wedge 1$.  Therefore with $z=e^y$
\begin{align*}
\int_0^\infty f(y)\, dy &= \int_0^1 \int_0^\infty \frac{1-(1-p)^{e^y}}{e^y} \, dy \, \frac{\Lambda(dp)}{p^2}\\ 
&\le \int_0^1 \bigg( \int_0^{|\log p|} p \, dy + \int_{|\log p|}^\infty e^{-y} \, dy \bigg) \frac {\Lambda(dp)}{p^2}\\
&= \int_0^1 (|\log p|+1) \frac{\Lambda(dp)} p < \infty .
\end{align*}
For any integer $i$ we have on the event $\sup_{t < \tau_{2^i,n}} |\log N_n(t)-Y_n(t)| \le \eps$ because of the monotonicity of $f$,
\begin{align*}
\int_0^{\tau_{2^i,n} } f(Y_n(s))\, ds &\le \sum_{j \ge i} \int_{\tau_{2^{j+1},n}}^{\tau_{2^j,n}}f(\log N_n(s)-\eps) \, ds\\
&\le \sum_{j \ge i} f(j \log 2-\eps) (\tau_{2^{j},n}-\tau_{2^{j+1},n}).
\end{align*}
From Lemma \ref{ESn} and the strong Markov property there is a $C>0$ such that
\[ E\bigg[\int_0^{\tau_{2^i,n} } f(Y_n(s))\, ds\bigg] \le C \sum_{j \ge i} f(j \log 2-\eps) \le \frac C{\log 2} \int_{(i-1)\log 2 - \eps}^\infty f(y)\, dy.\]
Choosing $i$ large enough this bound may be made arbitrarily small. In view of \eqref{SDE1} and Theorem~\ref{couplingTh} our claim follows.
\end{proof}

\begin{Rmk}\label{gimremark}{\em 
Gnedin, Iskanov, and Marynych \cite{gim11} also studied the absorption time $T_n$ by coupling with a subordinator.  
The hypothesis of Lemma 4.2 in \cite{gim11} is that $$\int_0^1 \bigg( \int_0^x \nu(y) \: dy \bigg) x^{-1} \: dx < \infty,$$
where $\nu(y) = \int_y^1 x^{-2} \: \Lambda(dx)$.  This condition is equivalent to (\ref{Gnd}).  To see this, note that
\begin{align*}
\int_0^1 (|\log x| +1) \: x^{-1} \: \Lambda(dx) &= \int_0^1 (-x\log x+x) \: x^{-2} \: \Lambda(dx) =\int_0^1 \bigg( \int_0^x(- \log y) \: dy \bigg) x^{-2} \: \Lambda(dx) \\
&= \int_0^1 (-\log y) \bigg( \int_y^1 x^{-2} \: \Lambda(dx) \bigg) dy = \int_0^1 \bigg( \int_y^1 z^{-1} \: dz \bigg) \nu(y) \: dy \\
&= \int_0^1 \bigg( \int_0^z \nu(y) \: dy \bigg) z^{-1} \: dz.
\end{align*}
}
\end{Rmk}

We now come to the proof of Theorem \ref{couplingTh}. It requires two preparatory lemmas.

\begin{Lemma}\label{binlem}
Suppose $X$ has a binomial distribution with parameters $b$ and $p$.  Then 
\begin{equation}\label{Rdef}
\log \bigg( \frac{X + 1}{b+1} \bigg) - \log p = \frac{1}{p} \bigg( \frac{X+1}{b+1} - p- \frac{1-p}{b+1} \bigg) +R,
\end{equation}
where   $$E[|R|] \leq \frac{1-p}{(b+1)p}.$$
\end{Lemma}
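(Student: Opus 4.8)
The plan is to first pin $R$ down by an algebraic manipulation, and then reduce the $L^1$ estimate to a second–moment inequality.

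\emph{The identity.} Put $W:=\frac{X+1}{(b+1)p}$ and $c:=\frac{1-p}{(b+1)p}$. The key observation is that $p+\frac{1-p}{b+1}=E\big[\frac{X+1}{b+1}\big]$, so dividing the bracket on the right–hand side of \eqref{Rdef} by $p$ and reorganising gives $R=\big(\log W-(W-1)\big)+c$. Since $\log w\le w-1$ for all $w>0$, the quantity $M:=(W-1)-\log W$ is $\ge 0$, whence $R=c-M$; in particular $R\le c$ always. (This is just the first–order Taylor expansion of $\log$ at $p$, with the linear part re-centred to be mean zero.)

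\emph{Reduction to a second moment bound.} By Cauchy--Schwarz $E[|R|]\le (E[R^2])^{1/2}$, and since $R=c-M$ with $M\ge 0$ we have $E[R^2]=c^2-2c\,E[M]+E[M^2]$. Hence it suffices to prove $E[M^2]\le 2c\,E[M]$. The two inputs I would use are: (i) the pointwise bounds $\frac{(W-1)^2}{2(W\vee 1)}\le M\le \frac{(W-1)^2}{W}$, which follow from $\frac{v}{1+v}\le\log(1+v)\le v$ applied with $v=W-1$ (the lower bound needing a one–line monotonicity check, or the integral representation $M=(W-1)^2\int_0^1\frac{t\,dt}{1+t(W-1)}$); and (ii) the exact formula \eqref{binexp}, which after expanding $(X+1-(b+1)p)^2$ and dividing by $X+1$ yields $E\big[\frac{(W-1)^2}{W}\big]=\frac{1}{(b+1)p}E\big[\frac{(X+1-(b+1)p)^2}{X+1}\big]=c-(1-p)^{b+1}$, so that in particular $E[M]\le c-(1-p)^{b+1}$.

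\emph{Main obstacle.} Turning these into $E[M^2]\le 2c\,E[M]$ is the delicate step: although $M$ is typically of order $(W-1)^2$ and hence small, it can be as large as $\tfrac1p-1+\log p$ when $X\approx b$, a value exceeding $2c$ for small $p$, and one must check that this heavy but low–probability tail does not spoil the inequality. I would handle it by splitting on $\{M\le 2c\}$, where $M(M-2c)\le 0$ contributes favourably to $E[R^2]-c^2=E[M(M-2c)]$, and on $\{M>2c\}$, where the excess is reduced via $M^2\le\big(\frac{(W-1)^2}{W}\big)^2$ to a fourth–moment expression controlled by the binomial tail bounds \eqref{bin1}--\eqref{bin2} together with \eqref{binexp}, and then offset against $\tfrac12 E[M]$, which is bounded below using $M\ge\frac{(W-1)^2}{2(W\vee 1)}$ and \eqref{binexp}. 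I expect this tail bookkeeping, rather than the identity or the Cauchy--Schwarz reduction, to be the technical heart of the proof.
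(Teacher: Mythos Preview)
Your identification $R=c-M$ with $M=(W-1)-\log W\ge 0$ is correct and is exactly what the paper obtains (there via the Mean Value Theorem, with $M$ written as $R'$). But from that point on you make the problem much harder than it is.

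The paper never touches second moments or Cauchy--Schwarz. It uses only the pointwise upper bound
\[
M \;\le\; W+W^{-1}-2 \;=\; \frac{(W-1)^2}{W},
\]
which you yourself write down, and takes expectations: since $E[W]=\frac{bp+1}{(b+1)p}$ and $E[W^{-1}]=(b+1)p\,E\big[\tfrac{1}{X+1}\big]\le 1$ by \eqref{binexp}, one gets $E[M]\le c$ in one line. You even carry out this calculation, obtaining the sharper $E\big[(W-1)^2/W\big]=c-(1-p)^{b+1}$. With $R=c-M$, $M\ge 0$, and $E[M]\le c$ in hand, the paper simply sets $R=c-R'$ and concludes.

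Your Cauchy--Schwarz route instead demands the strictly stronger inequality $E[M^2]\le 2c\,E[M]$, and the tail-splitting programme you outline to reach it is not actually carried out; nor is it clear it can be, since for small $p$ the value of $M$ on $\{X\approx b\}$ is of order $1/p$ while $c$ is only of order $1/((b+1)p)$. The ``main obstacle'' you identify is an artifact of the detour through second moments, not of the lemma itself: drop the Cauchy--Schwarz step and use the first-moment bound $E[M]\le c$ directly, as the paper does.
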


\begin{proof}
By the Mean Value Theorem, if $x > 0$ and $y > 0$, then there exists a positive number $z$ between $x$ and $y$ such that $\log x - \log y = z^{-1}(x - y)$.  Therefore, there exists a random variable $Z$ between $(X + 1)/(b+1)$ and $p$ such that $$\log \bigg( \frac{X + 1}{b+1} \bigg) - \log p = \frac{1}{Z} \bigg( \frac{X + 1}{b+1} - p \bigg) = \frac{1}{p} \bigg( \frac{X+1}{b+1} - p \bigg) - R',$$
where $$R' =    \bigg( \frac{1}{p}-\frac{1}{Z}   \bigg)\bigg( \frac{X + 1}{b+1} - p \bigg).$$
Clearly $R' \ge 0$. It remains to bound $E[R']$.  Because $Z$ must be between $(X +1)/(b+1)$ and $p$, we see that $|1/Z - 1/p|$ can be bounded from above by substituting $(X + 1)/(b+1)$ in place of $Z$.  We get
\begin{align*}
R' &\le \bigg( \frac{1}{p}-\frac{b+1}{X+1}   \bigg)\bigg( \frac{X + 1}{b+1} - p \bigg)= \frac {X+1}{(b+1)p} + \frac {(b+1)p}{X+1} -2.
\end{align*}
Now by (\ref{binexp}), $$E \bigg[ \frac{1}{X+1} \bigg] \leq \frac{1 }{(b+1)p}.$$
Therefore,
\begin{align*}
E[R']&\leq \frac{bp+1}{(b+1)p} -1= \frac{1-p}{(b+1)p}  .
\end{align*}
Letting $R= \frac{1-p}{(b+1)p} - R'$ proves the lemma.
\end{proof}

\begin{Lemma}\label{numblocks}
Suppose  $\Lambda((0, 1]) > 0$,  
and define $\tau_{k,n}$ as in (\ref{taukndef}). Then there exists a positive constant $C_1$,  depending on $\Lambda$ but not on $n$, such that for all $2 \le k \le n$,
\begin{equation}\label{expN1}
E \bigg[ \int_0^{\tau_{k,n}} \frac{1}{N_n(s)} \: ds \bigg] \leq \frac{C_1}{k}.
\end{equation}
\end{Lemma}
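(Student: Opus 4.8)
The plan is to slice the interval $[0,\tau_{k,n})$ dyadically according to the current block count and to estimate each slice with Lemma~\ref{ESn}. Take $\gamma=1/2$ in that lemma and let $C=C(\Lambda)$ be the resulting constant. For $j\ge0$ set $k_j:=2^j k$, so $k_0=k$ and $k_{j+1}=2k_j$. The stopping times $\tau_{k_j,n}$ are nonincreasing in $j$, they vanish once $k_j>n$, and $\tau_{k_0,n}=\tau_{k,n}$; moreover each $\tau_{k_j,n}$ is finite almost surely, because $\Lambda((0,1])>0$ forces $T_n<\infty$ almost surely. Hence $[0,\tau_{k,n})$ is the disjoint union of the intervals $[\tau_{k_{j+1},n},\tau_{k_j,n})$, $j\ge0$, and on the $j$-th of these one has $k_j\le N_n(s)<k_{j+1}$: the lower bound holds because $s<\tau_{k_j,n}$, and the upper bound because $s\ge\tau_{k_{j+1},n}$ together with the monotonicity of $N_n$ gives $N_n(s)\le N_n(\tau_{k_{j+1},n})<k_{j+1}$. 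Therefore
\[
\int_0^{\tau_{k,n}}\frac{ds}{N_n(s)}\;=\;\sum_{j\ge0}\int_{\tau_{k_{j+1},n}}^{\tau_{k_j,n}}\frac{ds}{N_n(s)}\;\le\;\sum_{j\ge0}\frac{1}{k_j}\bigl(\tau_{k_j,n}-\tau_{k_{j+1},n}\bigr),
\]
and by monotone convergence it suffices to show $E\bigl[\tau_{k_j,n}-\tau_{k_{j+1},n}\bigr]\le C$ for every $j$; the expectation of the right-hand side is then at most $\sum_{j\ge0}C/(2^jk)=2C/k$, which gives \eqref{expN1} with $C_1=2C$.

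The main step is this uniform bound. Apply the strong Markov property at $\tau_{k_{j+1},n}$: afterwards $(N_n(\tau_{k_{j+1},n}+t))_{t\ge0}$ is a $\Lambda$-coalescent started from $m:=N_n(\tau_{k_{j+1},n})$, and $m<k_{j+1}=2k_j$ (either because the chain has just jumped below $k_{j+1}$, or because $\tau_{k_{j+1},n}=0$, in which case $m=n<k_{j+1}$). If $m<k_j$ the increment $\tau_{k_j,n}-\tau_{k_{j+1},n}$ equals $0$. If $k_j\le m<2k_j$, then $\lfloor m/2\rfloor\le k_j-1<k_j$, so $\{N(t)\le\lfloor m/2\rfloor\}\subseteq\{N(t)<k_j\}$ for the restarted coalescent $N$; hence $\tau_{k_j,n}-\tau_{k_{j+1},n}$ is dominated by the time $W_m$ for a coalescent started from $m$ blocks (with $m\ge k_j\ge2$) to reach $\le m/2$ blocks, and $E[W_m]\le C$ by Lemma~\ref{ESn}. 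As this bound is uniform in $m$, we obtain $E[\tau_{k_j,n}-\tau_{k_{j+1},n}\mid\mathcal F_{\tau_{k_{j+1},n}}]\le C$, and taking expectations completes the step.

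I do not expect a genuine difficulty here; the delicate points are purely bookkeeping: checking that the dyadic slices really tile $[0,\tau_{k,n})$ (using $\tau_{k_j,n}=0$ for $k_j>n$, and handling the slice containing $0$ when $n<2k$), and the observation that the restarted coalescent always starts from strictly fewer than $2k_j$ blocks, which is exactly what lets the choice $\gamma=1/2$ in Lemma~\ref{ESn} control the hitting time of $\{N<k_j\}$. The only non-elementary input is Lemma~\ref{ESn} itself.
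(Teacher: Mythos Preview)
Your proof is correct and follows essentially the same route as the paper: slice the time interval geometrically according to the block count and bound the expected time in each slice uniformly. The only difference is that you take the dyadic factor $1/2$ and invoke Lemma~\ref{ESn} directly for the per-slice time bound, whereas the paper chooses a factor $(1-a)$ with small $a$ and re-derives the time bound from the Poisson structure and the law of large numbers; your version is a bit more economical since Lemma~\ref{ESn} is already available.
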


\begin{proof}
Because $\Lambda((0, 1]) > 0$, there exist positive numbers $r$ and $d$ such that $\Lambda([r, 1]) = d$.  This means that $p$-mergers with $p \geq r$ occur at rate $d$.  Let $a \in (0, r \wedge 1/2)$ and $c \in (0, d)$.  By the Law of Large Numbers, there exists a positive integer $m$ such that for $b \geq m$, whenever the coalescent has $b$ blocks, the rate of mergers that will bring the coalescent down to fewer than $(1-a)b$ blocks is at least $c$.  Let $e_b$ be the expected time, when the coalescent starts with $b$ blocks, before the number of blocks drops below $(1-a)b$.  Let $$C = \max \bigg\{\frac{1}{c}, e_2, \dots, e_m\bigg\}.$$  Then, for all $b \geq 2$, if the coalescent starts with $b$ blocks, the expected time before the number of blocks drops below $(1-a)b$ is at most $C$.  For positive integers $j$, let $$B_j = \{b \in \N: (1-a)^{-(j-1)}k \leq b < (1-a)^{-j}k\}.$$  Then the expected Lebesgue measure of $\{t: N_n(t) \in B_j\}$ is at most $C$.  Therefore,
$$E \bigg[ \int_0^{\tau_{k,n}} \frac{1}{N_n(s)} \: ds \bigg] \leq \sum_{j=1}^{\infty} \frac{C(1 - a)^{j-1}}{k} = \frac{C}{ak},$$ which implies (\ref{expN1}) with $C_1 = C/a$. 
\end{proof}

\begin{proof}[Proof of Theorem \ref{couplingTh}]
Again we construct the $\Lambda$-coalescent from the Poisson point process $\Psi$, as described at the beginning of Section \ref{sec2}.
Enumerate the points of $\Psi$ as $((t_i, p_i, u_{1,i}, \dots, u_{n,i}))_{i=1}^{\infty}$.  For each $i \in \N$, let $$X_i = \sum_{j=1}^{N_n(t_i-)} \1_{\{u_{j,i} > p_i\}},$$
which is the number of extant lines that are not included in the merger at time $t_i$.  Conditional on $p_i$ and $N_n(t_i-)$, the distribution of $X_i$ is binomial with parameters $N_n(t_i-)$ and $1 - p_i$.  Also, for all $i \in \N$, we have $N_n(t_i) = X_i + \1_{\{X_i < N_n(t_i-)\}}$.  
  Dividing both sides by $N_n(t_i-)$ and taking logs, we get $$\log N_n(t_i) - \log N_n(t_i-) = \log \bigg( \frac{X_i + \1_{\{X_i < N_n(t_i-)\}}}{N_n(t_i-)} \bigg).$$  Also, $$S(t_i) - S(t_i-) =- \log(1 - p_i).$$  It follows that for $t > 0$,
$$\log N_n(t) - (\log n - S(t)) = \sum_{i=1}^{\infty} \bigg( \log \bigg( \frac{X_i + \1_{\{X_i < N_n(t_i-)\}}}{N_n(t_i-)} \bigg) - \log(1 - p_i) \bigg) \1_{\{t_i \leq t\}}.$$ Noting
$$  \log \bigg( \frac{X_i + \1_{\{X_i < N_n(t_i-)\}}}{N_n(t_i-)} \bigg)= \log \bigg( \frac{X_i + 1}{N_n(t_i-)+1} \bigg) +  \1_{\{X_i < N_n(t_i-)\}} \log \frac{N_n(t_i-)+1}{N_n(t_i-)}$$
and letting 
\[U_n(t)= \sum_{i=1}^{\infty} \1_{\{X_i < N_n(t_i-)\}} \log\frac{N_n(t_i-)+1}{N_n(t_i-)}  \1_{\{t_i \leq t\}} ,\]
we can write
\begin{align*}
&\log N_n(t) - (\log n - S(t)) \\
&\hspace{0.5in}= \sum_{i=1}^{\infty} \bigg( \frac{1}{1 - p_i} \bigg( \frac{X_i+1}{N_n(t_i-)+1} - (1 - p_i) - \frac{p_i}{N_n(t_i-) + 1}\bigg) + R_i \bigg) \1_{\{t_i \leq t\}} +U_n(t) ,
 \end{align*}
where $R_i$ is defined as in (\ref{Rdef}), with $N_n(t_i-)$ in place of $n$, $X_i$ in place of $X$, and $1 - p_i$ in place of $p$. 

We now break this sum into pieces.  Let $\eps > 0$, and let $J = \{i \in \N: p_i \leq 1 - \eps/(4N_n(t_i-)) \}$.  For $t \geq 0$, let $$M_n(t) = \sum_{i=1}^{\infty} \frac{1}{1 - p_i}  \bigg( \frac{X_i+1}{N_n(t_i-)+1} - (1 - p_i) - \frac{p_i}{N_n(t_i-) + 1}\bigg) \1_{\{t_i \leq t \wedge T_n\}} \1_{\{i \in J\}}$$ and 
$$V_n(t) = \sum_{i=1}^{\infty}R_i \1_{\{t_i \leq t \wedge T_n\}} \1_{\{i \in J\}}.$$  The probability that $N_n(t_i) = 1$, conditional on $N_n(t_i-)$ and on the event $\{i \notin J\}$, is at least $1 - \eps/4$.  Therefore,
$$P\big( \log N_n(t) - U_n(t)-(\log n - S(t)) = M_n(t) + V_n(t) \mbox{ for all }t < T_n \big) \ge 1 - \eps/4,$$
which means that   for $k>1$
\begin{align}\label{MV1}
&P \bigg( \sup_{t\in[0,\tau_{k,n}]\cap[0,T_n)} \big|\log N_n(t) -U_n(t)- (\log n - S(t)) \big| > \frac \eps 4 \bigg) \nonumber \\
&\hspace{1in}\leq \frac{\eps}{4} + P \bigg( \sup_{t\le \tau_{k,n}} |M_n(t)| > \frac{\eps}{8} \bigg) + P \bigg( \sup_{t \le \tau_{k,n}} |V_n(t)| > \frac{\eps}{8} \bigg).
\end{align}

Conditional on $p_i$ and $N_n(t_i-)$, the random variable $$\frac{1}{1-p_i} \bigg(\frac{X_i + (1-p_i)}{N_n(t_i-) + 1} - (1 - p_i)\bigg)$$ has mean zero and variance $$\frac{N_n(t_i-) p_i}{(N_n(t_i-) + 1)^2 (1 - p_i)}.$$ 
In particular, the process $(M_n(t), t \geq 0)$ is a martingale.  Recalling the definition of $\tau_{k,n}$ from \eqref{taukndef}
and putting $l_p:= \lceil \eps/(4(1-p)) \rceil$, we get for the bracket process $\langle M_n\rangle$
\begin{align*}
\langle M_n \rangle (\tau_{k,n}) &\le \int_0^{\tau_{k,n}} \int_0^{1 - \eps/(4N_n(s))} \frac{p}{(N_n(s)+1)(1 - p)}  \: \frac{\Lambda(dp)}{p^2} \: ds \\
&\le \int_0^1 \frac{1}{1-p} \bigg( \int_0^{\tau_{k,n}} \frac{1}{N_n(s)} \1_{\{N_n(s) \ge \eps/(4(1-p))\}} \: ds \bigg) \:  \frac{ \Lambda(dp)}{p}
\\ &\le \int_0^1 \frac{1}{1-p} \bigg( \int_0^{\tau_{k,n}\wedge\tau_{l_p, n}} \frac{1}{N_n(s)}  \: ds \bigg) \: \frac{ \Lambda(dp)}p.
\end{align*}  
Combining this result with (\ref{expN1}) and using $\tau_{k,n}\wedge \tau_{l_p,n}= \tau_{k \vee l_p,n}$ we obtain
$$E\big[ \langle M_n \rangle (\tau_{k,n})\big] \leq \int_0^1 \frac{1}{1-p}  \cdot C_1\Big(\frac 1k\wedge \frac{4  (1-p)}{\eps}\Big) \cdot \frac{\Lambda(dp)}p ,$$ which is finite by (\ref{dustcond}) and goes to 0 for $k\to \infty$.  Therefore, by the $L^2$ Maximum Inequality for martingales and Markov's inequality, we get that for $k$ sufficiently large
\begin{equation}\label{MV2}
E \Big[ \sup_{t \le \tau_{k,n}} |M_n(t)|^2 \Big] \le \frac{\eps^3}{4\cdot 64} \quad \text{and} \quad P \bigg( \sup_{t\le \tau_{k,n}} |M_n(t)| > \frac{\eps}{8} \bigg)\le \frac \eps 4.
\end{equation}

We now consider the process $(V_n(t), t \geq 0)$.  
By Lemma \ref{binlem},
\begin{align}
E\Big[\sup_{t \le \tau_{k,n}} |V_n(t)|\Big] &\leq E \bigg[ \sum_{i=1}^{\infty}  |R_i| \1_{\{t_i \leq  \tau_{k,n}\}} \1_{\{i \in J\}} \bigg] \nonumber \\
&\leq E \bigg[ \int_0^{\tau_{k,n}} \int_0^{1 - \eps/(4N_n(s))} \frac{p}{(N_n(s)+1)(1 - p)}  \: \frac{\Lambda(dp)}{p^2} \: ds \bigg] \nonumber 
\end{align}
Thus as above, if $k$ is sufficiently large,
\begin{align*}
E \Big[ \sup_{t \le \tau_{k,n}} |V_n(t)| \Big]  \le \frac {\varepsilon^2}{32} \quad \text{and}\quad P\bigg(\sup_{t \le \tau_{k,n}} |V_n(t)| > \frac{\eps}8 \bigg) \le \frac{\eps} 4 .
\end{align*}
Together with \eqref{MV1} and \eqref{MV2} we arrive at
\begin{align} \label{MV3}
P\bigg( \sup_{t\in [0,\tau_{k,n}]\cap[0,T_n)} \big|\log N_n(t)-U_n(t)-(\log n-S(t))\big| > \frac \eps 4\bigg) \le \frac {3 \eps}4 .
\end{align}

Now we approximate $U_n(t)$ by $\int_0^t f(\log N_n(s)) \, ds$, uniformly for $t \le \tau_{k,n}$. Note that by (\ref{dustcond}), there are only finitely many $t_i$ such
that $t_i \leq T_n$ and $X_i < N_n(t_i-)$.  Denote these points by $s_1 < \dots < s_m$, and also set $s_0 = 0$ and $s_{m+1} = \infty$.  Note that $s_m = T_n$.  When the coalescent has $b$ blocks, the points $s_i$ appear at rate
\begin{align}
\rho(b)&= \int_0^1 \sum_{k=1}^b \binom bk p^k(1-p)^{b-k} \frac{\Lambda(dp)}{p^2} = \int_0^1 (1-(1-p)^b)\, \frac{\Lambda(dp)} {p^2} .
\label{intuition}
\end{align}
Therefore, the random variables $G_i = (s_{i+1} - s_i)\rho(N_n(s_i))$ for $0 \leq i \leq m-1$ are independent standard exponential random variables, also independent of the process $N_n(s_j), j \ge 1$.  Recalling~(\ref{defb}), we have 
$\rho(b) =bf(\log b).$
Now for $t \le T_n$
\begin{align*}
\int_0^{t} f(\log N_n(s)) \, ds &= \sum_{i=0}^{m-1} f(\log N_n(s_i))\Big((s_{i+1}-s_i)\1_{\{s_{i+1} \leq  t\}}+ (t - s_i)\1_{\{s_i <  t < s_{i+1}\}}\Big) \\
&= \sum_{i=0}^{m-1} \frac {G_i} {N_n(s_i)}\bigg(\1_{\{s_{i+1} \leq  t\}}+\frac{t - s_i}{s_{i+1}-s_i}\1_{\{s_i < t<s_{i+1}\}}\bigg).
\end{align*}
Consequently, since $U_n(t)= \sum_{i=0}^{m-1} \log\big( (N_n(s_i)+1)/N_n(s_i)\big) \1_{\{s_{i+1} \leq  t\}}$,
\begin{align*}
\int_0^{t} f(\log N_n(s)) \, ds - U_n(t) &= \sum_{i=0}^{m-1} \frac {G_i-1} {N_n(s_i)}\1_{\{s_{i+1} \leq  t\}} + \sum_{i=0}^{m-1} \frac {G_i} {N_n(s_i)} \frac{t - s_i}{s_{i+1}-s_i}\1_{\{s_i <  t< s_{i+1}\}} \\
&\qquad\mbox{}+ \sum_{i=0}^{m-1} \bigg( \frac{1}{N_n(s_i)}- \log \frac{N_n(s_i)+1}{N_n(s_i)}\bigg)\1_{\{s_{i+1} \leq  t\}}.
\end{align*}
Using that the second sum has just one non-vanishing summand, and that $x - \log(1+x) \leq x^2$ for $x \geq 0$, we have for $t \le \tau_{k,n}$
\begin{align} \label{intf}
\bigg|\int_0^{t}&f(\log N_n(s)) \, ds - U_n(t)\bigg| \\&\leq \bigg| \sum_{i=0}^{m-1} \frac{G_i-1} {N_n(s_i)}\1_{\{s_{i +1} \le t\}} \bigg| + \max_{0 \leq i \leq m-1} \frac {G_i} {N_n(s_i)}\1_{\{s_{i} <  \tau_{k,n}\}} + \sum_{i=0}^{m-1} \frac 1{ N_n(s_i)^2}\1_{\{s_{i} <  \tau_{k,n}\}}.\notag
\end{align}
We show that for $k$ sufficiently large the supremum over $t \le \tau_{k,n}$ of the right-hand side gets arbitrarily small in probability, uniformly in $n$.  To this end we deal with the three summands on the r.h.s. of \eqref{intf} in reverse order.

First we have
\[\sum_{i=0}^{m-1} \frac 1{N_n(s_i)^2}\1_{\{s_{i} <  \tau_{k,n}\}} \le \sum_{j=k}^n \frac 1{j^2} +\sum_{i=1}^{m-1} \frac 1{N_n(s_i)^2} \1_{\{ N_n(s_i) = N_n(s_{i-1})\}} \1_{\{s_{i} <  \tau_{k,n}\}}\]
and so by Lemma \ref{numblocks}
\begin{align} \label{MV5}
E\bigg[\sum_{i=0}^{m-1} \frac 1{N_n(s_i)^2}\1_{\{s_{i} <  \tau_{k,n}\}} \bigg]&\le \frac 2k +
E\bigg[ \int_0^1 \int_0^{\tau_{k,n}} \frac {N_n(s)p(1-p)^{N_n(s)-1}}{N_n(s)^2} \,ds \:\frac{\Lambda(dp)}{p^2} \bigg]\nonumber\\&\le\frac 2k+ \int_0^1E\bigg[  \int_0^{\tau_{k,n}} \frac 1{N_n(s) } \, ds \bigg] \frac{\Lambda(dp)} p \nonumber \\
&\le \frac 1k\bigg(2 + C_1 \int_0^1 \frac {\Lambda(dp)}p\bigg)  .
\end{align}

Second, since $E[G_i^2]=2$, we have for $u>0$
\begin{align*}
P\bigg( \max_{0 \leq i \leq m-1} \frac {G_i} {N_n(s_i)}\1_{\{s_{i} <  \tau_{k,n}\}} > u\bigg) &\le E\bigg[ \sum_{i=0}^{m-1} P\bigg( \frac {G_i} {N_n(s_i)}\1_{\{s_{i} <  \tau_{k,n}\}} > u \, \Big| \, N_n(s_i), i \ge 1 \bigg) \bigg]
\\& \le \frac 1 {u^2} E\bigg[ \sum_{i=0}^{m-1} \frac 2{N_n(s_i)^2} \1_{\{s_{i} <  \tau_{k,n}\}}\bigg]
\\& \le \frac 2 {u^2k}  \bigg(2+ C_1 \int_0^1 \frac {\Lambda(dp)}p \bigg) ,
\end{align*}
where we used \eqref{MV5} in the last inequality.

Third let
\[ M_n'(t) = \sum_{i=0}^{m-1} \frac {G_i-1} {N_n(s_i)}\1_{\{s_{i+1} \leq  t\}}. \]
Then $(M_n'(t), t \geq 0)$ is a martingale with 
\[E\big[  \langle M_n' \rangle (\tau_{k,n})\big]=  E\bigg[\sum_{i=0}^{m-1} \frac 1{N_n(s_i)^2}\1_{\{s_{i+1} \leq  \tau_{k,n})\}} \bigg] \le E\bigg[ \sum_{i=0}^{m-1} \frac 1{N_n(s_i)^2}\1_{\{s_{i} <  \tau_{k,n})\}} \bigg]  , \] 
and again by means of the $L_2$ Maximum inequality and \eqref{MV5}
\[ E\bigg[\sup_{t \le \tau_{k,n}}\bigg| \sum_{i=0}^{m-1} \frac{G_i-1} {N_n(s_i)}\1_{\{s_{i+1} \leq  t\}} \bigg| ^2 \bigg] \le  \frac 4k\bigg(2 + C_1 \int_0^1 \frac {\Lambda(dp)}p\bigg) . \]

Using these three estimates we obtain from \eqref{intf} that for any $\eps >0$ 
\[ P\bigg(\sup_{t \le \tau_{k,n}}\bigg| \int_0^{t} f(\log N_n(s)) \, ds - U_n(t)\bigg| > \frac \eps 4\bigg) \le \frac \eps4, \]
if $k$ is sufficiently large. Combining this bound with  \eqref{MV3} we arrive at the formula
\begin{align}\label{MV4} P\bigg( \sup_{t \in [0,\tau_{k,n}]\cap [0,T_n)} \bigg| \log N_n(t) - \int_0^t f(\log N_n(s))\, ds - (\log n -S(t))\bigg| > \frac \eps 2\bigg) \le \eps .
\end{align}

To finish the proof we define for $t\ge 0$
\begin{align*} \Delta_n(t) &:= \log N_n(t)-  \int_0^t f(\log N_n(s))\, ds - (\log n - S(t)) \\
&= \log N_n(t)-  \int_0^t f(\log N_n(s))\, ds -\bigg(Y_n(t)-\int_0^t f(Y_n(s)) ds \bigg).
\end{align*}
For fixed $t$ and $n$ we consider the event $A_{\ge} := \{t < T_n,\, t\le \tau_{k,n},\, \log N_n(t) \ge Y_n(t)\} $ and   define the random time 
$$\sigma_t:= \sup\{s\le t: \log N_n(s) \le Y_n(s)\}.$$
Then on the event $A_{\ge}$ we have 
$\log N_n(\sigma_t-) - Y_n(\sigma_t-) \le 0$ and $f(\log N_n(s))-f(Y_n(s)) \le 0$ for $s> \sigma_t$, since $f$ is decreasing. Thus, on $A_{\ge}$,
\begin{align*}
0 &\le \log N_n(t) -Y_n(t) \\ &= \log N_n(\sigma_t-)-Y_n(\sigma_t-) + \int_{\sigma_t}^t (f(\log N_n(s))-f(Y_n(s)))\, ds +\Delta_n(t)-\Delta_n(\sigma_t-)
\\
&\le \Delta_n(t) - \Delta_n(\sigma_t-) \\
&\le 2 \sup_{t \in[0,\tau_{k,n}]\cap[0,T_n)} |\Delta_n(t)|.
\end{align*}
Similarly  on $A_{\le} := \{t < T_n,\, t \le \tau_{k,n}, \, \log N_n(t) \le Y_n(t)\}$, 
$$0 \le Y_n(t)-  \log N_n(t)  \le 2 \sup_{t \in[0,\tau_{k,n}]\cap[0,T_n)} |\Delta_n(t)|.$$
Recalling (\ref{MV4}), this implies that for sufficiently large $k$,
\begin{align*}
P \bigg( \sup_{t \in[0,\tau_{k,n}]\cap[0,T_n)} \big| \log N_n(t) - Y_n(t) \big| >\eps\bigg) \le  P\bigg(\sup_{t \in[0,\tau_{k,n}]\cap[0,T_n)} |\Delta_n(t)|> \frac \eps 2\bigg) \le \eps,
\end{align*}
which was the claim.
\end{proof}

\section{Proof of Theorem 2}\label{sec3}
In this section we prove Theorem \ref{limitdist}. First we provide a lemma which gives a uniform lower bound for the probability that the block-counting process does not jump over certain intervals.
\begin{Lemma}\label{NMK}
Assume \eqref{maincond} and that $\Lambda$ is log-nonlattice.
Fix $0<\delta <1$ and $K > 1$.  Suppose $m < n \leq Km$.  Then there exist constants $C>0$ and $\alpha \in (0, 1]$, depending on $\delta$ and $K$ but not on $m$ or $n$, such that $P((1 - \delta)\alpha m \leq N_n(t) \leq \alpha m \mbox{ for some }t \geq 0) \geq C$.
\end{Lemma}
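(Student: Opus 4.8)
The plan rests on an elementary reformulation. Since $N_n$ is non‑increasing with $N_n(0)=n>\alpha m$ (because $\alpha\le1<n/m$) and $N_n(t)\to1<(1-\delta)\alpha m$ for $m$ large, the window $W:=[(1-\delta)\alpha m,\alpha m]$ is visited \emph{unless} some single merger takes $N_n$ from a value $>\alpha m$ directly to a value $<(1-\delta)\alpha m$; call such a merger a \emph{$W$‑skip}. So I must bound from below the probability that no $W$‑skip occurs. A $W$‑skip has multiplicative size $<1-\delta$, hence, by Lemma \ref{binbound} and for $m$ large, it arises up to a negligible event from a $p$‑merger with $p$ exceeding a constant $p_\delta>0$, and from a state $b\gg\alpha m$ it requires $p$ very close to $1$, where \eqref{maincond} — which makes the $\Lambda$‑mass on $(1-\varepsilon,1]$ summably small — controls the rate. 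The argument then splits according to whether $\Lambda$ has a dust component, i.e.\ whether $\int_0^1 p^{-1}\,\Lambda(dp)<\infty$.

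Suppose first that $\Lambda$ has \emph{no dust}. Then the coalescent comes down fast: started from $b$ blocks, the expected time to fall below $b/C$ blocks tends to $0$ as $b\to\infty$, for each fixed $C>1$ (equivalently, the drift of $-\log N_n$ at state $b$ tends to $\infty$, a property which fails exactly under dust). Since $n\le Km$, the states $>\alpha m$ that $N_n$ can occupy lie in $(\alpha m,Km]$, a range of bounded multiplicative width $K/\alpha$ once $\alpha=e^{-a}$ is fixed, and I cover it by $O(1)$ dyadic bands. In each band $[2^j\alpha m,2^{j+1}\alpha m]$ the expected sojourn time of $N_n$ tends to $0$ uniformly in $n\le Km$ (by the fast‑descent fact and the strong Markov property, with Lemma \ref{numblocks} for the bookkeeping), while the $W$‑skip rate there is uniformly bounded — by $\int_{p_\delta}^1 p^{-2}\,\Lambda(dp)$ in the low bands and, in the high ones, by a quantity tending to $0$ thanks to \eqref{maincond}. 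Multiplying and summing, the expected number of $W$‑skips tends to $0$, so $P(N_n\text{ visits }W)\to1$; here log‑nonlattice is not used and any $\alpha\in(0,1]$ works.

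Suppose now that $\Lambda$ \emph{has dust}, so the approximation of Section 3 applies: by Theorem \ref{couplingTh}, $\log N_n$ stays within a small error of the SDE solution $Y_n$ of \eqref{SDE1} up until $N_n$ drops below a fixed level, and the jumps of $Y_n$ are exactly the jumps of $S$, namely $-\log(1-p)$ for the $p$‑mergers. Under \eqref{maincond} we have $\Lambda(\{1\})=0$, and log‑nonlattice is equivalent to the closed subgroup of $(\R,+)$ generated by $\{-\log(1-p):p\in\operatorname{supp}\Lambda\cap(0,1)\}$ being all of $\R$; hence the additive semigroup of finite sums of such numbers is $\eta$‑dense in $[a_0,\infty)$ for every $\eta>0$ once $a_0=a_0(\eta,\Lambda)$ is large. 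Fix $\eta<\tfrac14|\log(1-\delta)|$, put $a:=a_0+\log K$ and $\alpha:=e^{-a}$, and fix a threshold $t_0>0$ separating ``large'' jumps of $S$ (size $\ge t_0$, which occur at finite rate) from ``small'' ones. Given $n\in(m,Km]$, the target depth $\log n-\log(\alpha m)=\log(n/m)+a$ lies in $[a,a+\log K]$, so there is a recipe of at most $r_0(\delta,K,\Lambda)$ large jumps, each realisable inside a positive‑$\Lambda$‑measure neighbourhood of a support point, whose sum is within $\eta$ of (a value slightly below) the target depth; by the independence of the large‑ and small‑jump parts of $S$, the probability that the first $r$ large jumps of $S$ fall into these neighbourhoods in order, and that the small‑jump part of $S$ accumulated up to the $r$‑th of them is at most $\tfrac14|\log(1-\delta)|$, is bounded below by a constant $C(\delta,K)>0$ uniformly in $n$. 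On this event, and for $m$ large (so that the coupling error of Theorem \ref{couplingTh} is small and $\int_0^{\cdot}f(Y_n)\,ds$ is negligible over the short, stochastically bounded duration of the recipe, since $f$ decreases to $0$), $\log N_n$ lands in $\log W$, i.e.\ $N_n$ lands in $W$.

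I expect the main difficulty to be the no‑dust branch: converting ``$\Lambda$ has no dust'' into the quantitative statement that bounded multiplicative bands high up are crossed in \emph{vanishing} expected time, uniformly in the starting point $n\le Km$ — bounded sojourn times, as Lemma \ref{numblocks} gives on its own, would not be enough — and, at the same time, controlling the rare but genuinely dangerous mergers with $p$ close to $1$ from states far above $\alpha m$, for which \eqref{maincond} must be exploited with care; in the dust branch the comparable care goes into making the recipe's lower bound uniform in $n$ and into absorbing the SDE drift and the coupling error of Theorem \ref{couplingTh}.
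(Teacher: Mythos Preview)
Your approach is plausibly correct but genuinely different from the paper's, and the comparison is instructive.

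\textbf{The case splits differ.} The paper splits on whether $\Lambda((0,\eta])>0$ for every $\eta>0$ (Case~1) or $\Lambda([0,\eta])=0$ for some $\eta>0$ (Case~2). You split on dust/no-dust. These are not the same: no-dust implies mass near~$0$ (so your no-dust branch is a sub-case of the paper's Case~1), while ``bounded away from~$0$'' implies dust (so the paper's Case~2 is a sub-case of your dust branch). The awkward region is ``dust with mass near~$0$'': the paper handles it in Case~1, you in the dust branch.

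\textbf{The ``easy'' branches.} In Case~1 the paper uses an elegant independence trick: split $\Lambda=\Lambda|_{[0,\eta]}+\Lambda|_{(\eta,1]}$, so that the small-$p$ coalescent has only small multiplicative jumps and therefore must pass through $W$, while the large-$p$ coalescent is inactive for a short time with positive probability. This needs nothing beyond Lemma~\ref{ESn} and the binomial tail bound. Your no-dust argument instead relies on the quantitative fact that the expected time to lose a fixed fraction of blocks tends to~$0$ as the block count grows; this is true (optional stopping on $N_n(t)+\int_0^t\psi(N_n(s))\,ds$ together with $\psi(b)/b\to\infty$ gives it), but it is not supplied by Lemma~\ref{ESn} or Lemma~\ref{numblocks} and must be proved separately. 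Once you have it, your route is arguably more direct---it gives $P(N_n\text{ visits }W)\to1$ outright.

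\textbf{The ``hard'' branches.} In Case~2 the paper has $\Lambda$ bounded away from~$0$, hence the stronger condition~\eqref{Gnd} holds and Corollary~\ref{ApproxS} gives the drift-free approximation $\log N_n\approx\log n-S$; then the classical renewal theorem (finite mean, nonlattice) does the rest in one line. Your dust branch is broader, so you can only invoke Theorem~\ref{couplingTh} (approximation by $Y_n$, with the drift $f$), and you replace the renewal theorem by a hand-built recipe of support-point jumps. This can be made to work---compactness of the target interval $[a,a+\log K]$ lets you fix finitely many recipes, then choose $t_0$ below the smallest recipe jump, and the drift $\int_0^{T_r}f(Y_n)\,ds$ is small since $Y_n\gtrsim\log(\alpha m)$ throughout---but it is considerably more delicate than the paper's argument, and the uniformity in $n$ needs care at each step.

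In short: both routes succeed, but the paper's split is tailored so that each case admits a short, self-contained argument (the independence trick, respectively the renewal theorem via Corollary~\ref{ApproxS}), whereas your split trades the independence trick for an extra lemma on descent speed and trades the renewal theorem for a constructive density argument with more moving parts.
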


\begin{proof}
We distinguish two cases. First assume that for all $\eta >0$ we have $\Lambda((0,\eta])>0$. 
Let $\eta = 4^{-2K/\delta}$ and define $N_n'$, $ N_n''$ to be the block-counting processes belonging to the two coalescents arising by restricting $\Lambda$ to the intervals either $[0, \eta]$ or  $(\eta, 1]$, and using the same Poisson process~$\Psi$. The processes $N_n'$, $ N_n''$ are independent, therefore for any $u>0$
\begin{align*}
&P( (1-\delta)m \le N_n(t) \le m \text{ for some } t \ge 0)\\
&\hspace{.3in}\ge P( N_n''(u)=n ,\  N_n'(u) \le (1-\delta)m ,\ \sup_{t \le u} ( N_n'(t-)- N_n'(t) )\le \delta m)) \\
&\hspace{.3in}\ge P( N_n''(u)=n ,\  N_n'(u) \le (1-\delta)n/K ,\ \sup_{t \le u} ( N_n'(t-)- N_n'(t) )\le \delta n/K)) \\
&\hspace{.3in}\ge P( N_n''(u)=n )P( N_n'(u) \le (1-\delta)n/K)- P(\sup_{t \le u} (N_n'(t-)-N_n'(t) )> \delta n/K).
\end{align*}
By assumption the process $N_n'$ is non-degenerate. Thus in view of Lemma \ref{ESn} the expectation of $W_n':= \min\{ t \ge 0: N_n'(t) \le  (1-\delta)n/K\}$ is bounded by a constant $\kappa$, depending on $\delta$ and $K$ but not on $n$. Choosing $u=2\kappa$ we obtain from  Markov's Inequality
\[ P(N_n'(2\kappa) > (1-\delta)n/K) = P( W_n' \ge 2\kappa) \le \frac 1{2\kappa} E[W_n'] \le \frac 12.  \]
Moreover 
\[P(N_n''(2\kappa) =n) \ge  e^{-2\kappa \int_\eta^1 p^{-2}\, \Lambda(dp)}>0. \]

Finally, for the rate at which $N_n'$ performs at time $t$ a jump of size larger than $\delta n/K$, we obtain from \eqref{bin2} and from the choice of $\eta$ for $n \ge 4K/\delta$ the bound
\begin{align*} \int_0^\eta P(B_{N_n'(t-),p} > \delta n/K) \, \frac{\Lambda(dp)}{p^2} &\le \int_0^\eta p^{\delta n/K} 2^{N_n'(t-)} \, \frac{\Lambda(dp)}{p^2} \\&\le \eta^{\delta n/(2K)}2^n \Lambda([0,1])= 2^{-n}\Lambda([0,1]). 
\end{align*}
Therefore
\[ P(\sup_{t \le 2\kappa} (N_n'(t-)-N_n'(t) )> \delta n/K) \le 2\kappa 2^{-n} \Lambda([0,1]) \ . \]
Putting our estimates together we arrive at
\[ P( (1-\delta)m \le N_n(t) \le m \text{ for some } t \ge 0) \ge \frac 14 e^{-2\kappa \int_\eta^1 p^{-2}\, \Lambda(dp)}>0 \]
for $n$ sufficiently large and any $m$ with $m < n\le Km$.  A further lowering of this bound makes the estimate valid for all $n$. Letting $\alpha=1$ our claim follows.

For the second part of the proof let $\Lambda([0,\eta])=0$ for some $\eta >0$. Then \eqref{Gnd}  is satisfied such that we may resort to Corollary \ref{ApproxS}. Note that our log-nonlattice assumption means that the random walk $(S(i), i \in \mathbb N_0)$ is non-lattice in the usual sense.  Condition \eqref{maincond} implies $E[S(1)]<\infty$. Therefore the classical renewal theorem implies that with $\alpha$ sufficiently small there is a constant $0<C\le 1/2$ depending on $\delta$  such that for all $s\ge 0$
\[ P\Big(\exists i \in \mathbb N_0: s-\log \alpha - \frac 13 \log (1-\delta) \le S(i)\le s - \log \alpha - \frac 23 \log (1-\delta)\Big) \ge 2C ,\]
and consequently for $m< n$ (letting $s=\log n-\log m$)
\begin{align} P \Big(\exists t\ge 0:\frac 23 \log (1-\delta) +\log \alpha  m \le \log n - S(t) \le \frac 13 \log (1-\delta) +\log \alpha  m \Big)\ge 2C. \label{eq1}
\end{align}
Next, choose $k$ according to Corollary \ref{ApproxS} so that \eqref{couplingY2} holds with $\eps= \frac 14 C \wedge \frac 13 |\log(1-\delta)|$.  Let $k$ be so large that by Theorem \ref{tight}, we have
$P(\tau_{k,n}=T_n)= P( N_n(T_n-) \ge k) \le \frac 14 C$ for all $n$. Then
\begin{align} P\Big(\sup_{t \le \tau_{k,n}} |\log N_n(t)-\log n+ S(t)| \ge \frac 13 |\log (1-\delta)|\Big) \le \frac 12 C. \label{eq2}
\end{align}
In particular with $t=\tau_{k,n}$, since $k \ge N_n(\tau_{k,n})$,
\[ P\Big(\log n - S(\tau_{k,n}) \ge \log k - \frac 13 \log (1-\delta)\Big) \le \frac 12 C \]
and hence  for $n$  sufficiently large, because $m\ge n/K$, and because of a.s. monotonicity of $S$,
\[ P\Big(\forall t > \tau_{k,n} : \log n -S(t) < \log \alpha m + \frac 23 \log (1-\delta) \Big) \ge 1-\frac 12 C .\]
Intersecting  this event  with the event in \eqref{eq1} we obtain 
\[P \Big(\exists t\le \tau_{k,n}:\frac 23 \log (1-\delta) +\log \alpha   m \le \log n - S(t) \le \frac 13 \log (1-\delta) +\log \alpha  m \Big)\ge\frac 32  C. \]
Hence from \eqref{eq2} it follows for $n$ sufficiently large and $m<n\le Km$
\[ P \Big(\exists t \le \tau_{k,n}:  \log (1-\delta) +\log \alpha   m \le \log N_n(t) \le \log \alpha  m \Big) \ge C .\]
Again by suitably lowering the constant $C$ this estimate holds for all $n$, which then translates into our claim.
\end{proof}

\begin{proof}[Proof of Theorem \ref{limitdist}]
We prove this result by coupling.  Let $\eps > 0$.  It suffices to show that there exists a positive integer $n_0$ such that
if $n_0 < n_1 < n_2$, then we can construct $\Lambda$-coalescents $(\Pi_{n_1}(t), t \geq 0)$ and $(\Pi_{n_2}(t), t \geq 0)$ started with $n_1$ and $n_2$ blocks respectively such that
\begin{equation}\label{couple1}
P(N_{n_1}(T_{n_1}-) = N_{n_2}(T_{n_2}-)) > 1 - \eps.
\end{equation}

By Theorem \ref{tight}, we can choose a positive integer $\ell$ such that $P(N_n(T_n-) \leq \ell) > 1 - \eps/4$ for all~$n$.   Let $C$ be the constant from Lemma \ref{NMK} with $\delta=\eps/(4\ell)$ and with the constant $K=K_{1/2}$ from Proposition \ref{mKmprop}.  Choose a positive integer $J$ large enough that $$\bigg(1 - \frac{C^2}{4}\bigg)^J < \frac{\eps}{2}.$$  Then for $1 \leq j \leq J$, let $m_j = \lfloor n_0^{j/J} \rfloor$.  For $1 \leq j \leq J$ and $i \in \{1, 2\}$, let $A_{i,j}$ be the event that $m_j < N_{n_i}(t) \leq K m_j$ for some $t \geq 0$, and let $D_{i,j}$ be the event that $(1 - \delta)\alpha m_j \leq N_{n_i}(t) \leq \alpha m_j$ for some $t \geq 0$, with the constant $\alpha$ as in Lemma~\ref{NMK}.  It follows from Proposition \ref{mKmprop} and Lemma~\ref{NMK} that for $1 \leq j \leq J$ and $i \in \{1, 2\}$, we have
\begin{equation}\label{PBijprelim}
P(D_{i,j}) \geq P(D_{i,j}\cap A_{i,j})=P(A_{i,j}) P(D_{i,j}|A_{i,j}) \geq \frac{1}{2} C.
\end{equation}
We will need to establish that a similar inequality holds when we condition on the events $D_{i,k}$ for $k > j$.  To this end, let $U_{i,J} = 0$ for $i \in \{1, 2\}$, and for $1 \leq j \leq J-1$ and $i \in \{1, 2\}$, define the stopping time $U_{i,j} = \inf\{t\ge 0: N_{n_i}(t) \leq \alpha m_{j+1}\}$.  For $1 \leq j \leq J$ and $i \in \{1, 2\}$, let $G_{i,j} = \{N_{n_i}(U_{i,j}) > m_j\}$.  Let $(\mathcal{F}_{i}(t), t \geq 0)$ be the natural filtration associated with the process $(\Pi_{n_i}(t), t \geq 0)$.  With $N_{n_i}(U_{i,j})$ figuring as the new starting point, the reasoning leading to (\ref{PBijprelim}) implies that for $1 \leq j \leq J$ and $i \in \{1, 2\}$, we have, on the event $G_{i,j}$,
\begin{equation}\label{PDij}
P(D_{i,j}|\mathcal{F}_i(U_{i,j})) \geq \frac{1}{2} C \text{ a.s.}
\end{equation}
Because $m_{j+1}/m_j \rightarrow \infty$ as $n_0 \rightarrow \infty$, it follows from Proposition \ref{mKmprop} that
\begin{equation}\label{PGij}
\lim_{n_0 \rightarrow \infty} P(G_{i,j}) = 1.
\end{equation}
Since $D_{i,k} \in \mathcal{F}_i(U_{i,j})$ for $1 \leq j < k \leq J$ and $i \in \{1, 2\}$, the results (\ref{PDij}) and (\ref{PGij}) imply that if the processes $(\Pi_{n_1}(t), t \geq 0)$ and $(\Pi_{n_2}(t), t \geq 0)$ are independent, then
\begin{equation}\label{probcouple}
\limsup_{n_0 \rightarrow \infty} P \bigg( \bigcup_{j=1}^J (D_{1,j} \cap D_{2,j}) \bigg) \geq 1 - \bigg(1 - \frac{C^2}{4}\bigg)^J > 1 - \frac{\eps}{2}.
\end{equation}

We now couple the processes $(\Pi_{n_1}(t), t \geq 0)$ and $(\Pi_{n_2}(t), t \geq 0)$.  We allow the two processes to evolve independently until the times $U_{1,J-1}$ and $U_{2,J-1}$ respectively.  If $D_{1,J} \cap D_{2,J}$ occurs, then we stop.  Otherwise, we allow the processes to continue to evolve independently until the times $U_{1,J-2}$ and $U_{2,J-2}$ respectively.  Then we stop if $D_{1,J-1} \cap D_{2,J-1}$ occurs, and otherwise continue as before.  According to (\ref{probcouple}), with probability at least $1 - \eps/2$, we will eventually come to a value of $j$ such that $D_{1,j} \cap D_{2,j}$ occurs.  In that case, the independent constructions will be stopped at the times $U_{1,j-1}$ and $U_{2,j-1}$ respectively, at which times both processes will have between $(1 - \delta)\alpha m_j$ and $\alpha m_j$ blocks.  

We now suppose the independent constructions are stopped at the times $U_{1,j-1}$ and $U_{2,j-1}$.
Set $n_1' = N_{n_1}(U_{1,j-1})$ and $n_2' = N_{n_2}(U_{2,j-1})$.  Without loss of generality, assume $n_1' < n_2'$.  Let $B_{1,1}, \dots, B_{1,n_1'}$ and $B_{2,1}, \dots, B_{2,n_2'}$ denote the blocks of the partitions $\Pi_{n_1}(U_{1,j-1})$ and $\Pi_{n_2}(U_{2,j-1})$ respectively.  We now construct $(\Pi_{n_1}(U_{1,j-1} + t), t \geq 0)$ and $(\Pi_{n_2}(U_{2,j-1}+ t), t \geq 0)$ from the same Poisson point process $\Psi$, as described at the beginning of Section \ref{sec2}.  This means both processes will have $p$-mergers at the same times, and the number of blocks in $\Pi_{n_2}(U_{2,j-1} + t)$ that contain integers from one or more of the blocks $B_{2,1}, \dots, B_{2,n_1'}$ will equal $N_{n_1}(U_{1,j-1} + t)$.  Recall that $T_{n_2}$ is the time of the last merger in $(\Pi_{n_2}(t), t \geq 0)$.  Unless one or more blocks of $\Pi_{n_2}(T_{n_2}-)$ contains only integers from the blocks $B_{2,n_1'+1}, \dots, B_{2,n_2'}$, we will have $N_{n_1}(T_{n_1}-) = N_{n_2}(T_{n_2}-)$.  By the exchangeability of the coalescent dynamics, conditional on $n_1'$ and $n_2'$, the probability that a particular block of $\Pi_{n_2}(T_{n_2}-)$ contains only integers from the blocks $B_{2,n_1'+1}, \dots, B_{2,n_2'}$ is at most $(n_2' - n_1')/n_2'$, which is at most $\delta$ because we are assuming that $D_{1,j} \cap D_{2,j}$ occurs.  Therefore, recalling that $\ell$ was chosen so that $P(N_{n_2}(T_{n_2}-) > \ell) < \eps/4$, we have $$P(N_{n_1}(T_{n_1}-) \neq N_{n_2}(T_{n_2}-)) \leq \frac{\eps}{2} + \frac{\eps}{4} + \ell \delta = \eps,$$ which implies (\ref{couple1}).
\end{proof}

\section{Non-convergence for Eldon-Wakeley coalescents} \label{Eldon}

To provide an example where the distribution of the size of the last merger does {\em not} converge as $n\to \infty$, we now focus on the class of coalescents proposed in \cite{elwa}  and thus assume that the measure $\Lambda$ is concentrated in one point $p \neq 0,1$. Because of  Theorem~\ref{tight}, for such coalescents the size of the last merger is tight. We claim that still $L_n$ does not converge in distribution as $n\to \infty$.  There are obvious relations to non-convergence and periodicity phenomena in the  so-called leader election, see e.g. Gr\"ubel and Hagemann \cite{GH} and references therein.  

For notational convenience  we restrict ourselves to the case $\Lambda= p^2 \delta_p$ and $p=e^{-1}$.  Then the points of 
the Possion point process 
$\Psi$ are of the form $(\sigma_i,p,u_1, \ldots,u_n)$, $i=1,2,\ldots$, where the numbers $0 < \sigma_1 < \sigma_2 <\cdots$ form a standard Poisson point process on $\mathbb R_+$. Define $\tau_{k,n}$ as in (\ref{taukndef}).

We shall argue by contradiction, so let us assume that $L_n$ does converge in distribution. Then, as shown in Theorem \ref{timerev}, the sequence of time-reversed Markov chains converges as $n \to \infty$ in distribution to a limiting Markov chain. This  implies
\begin{align} \label{levelk} \forall \eps >0\ \exists k>0  :  N_n(\tau_{k,n}) \stackrel{d}\to N_{\infty,k} \text{ with } P(N_{\infty,k}\ge 2) \ge 1-\eps .
\end{align}
Together with $N_n$ we consider a process $\overline N_n \ge N_n$ defined inductively as follows: $\overline N_n(0)=N_n(0)$ and at times $\sigma_i$ the random number $\overline N_n(\sigma_i)$ is thinned according to $p$ and afterwards is increased by one. Thinking of $N_n$ and $\overline N_n$ as numbers of lines, the difference between both processes only arises, when by a thinning no line of $N_n$ is affected. Then $N_n$ does not change its value but $\overline N_n$ increases by 1. Given $N_n(t)=m$ this takes place with probability $q^{m}$ with $q=1-p$. This may occur several times, and, as long as $N_n$ stays at level $m$, the expected increase of $\overline N_n$ is bounded from above by $q^m/(1-q^m)\le q^m/p$. Therefore, given $\eps >0$ there is a $k$ such that
\[ E[\overline N_n(\tau_{k,n})-N_n(\tau_{k,n}) ] \le \sum_{m\ge k} \frac {q^m}p= \frac{q^k}{p^2} \le \eps \text{ and } P(\overline N_n(\tau_{k,n})=N_n(\tau_{k,n}) )\ge 1-\eps. \]
Combined with \eqref{levelk} we obtain that also for $\overline N_n$ the size of the first jump to 1 converges in distribution with $n \to \infty$.

Now consider a representation $\overline N_n= U_n+V_n$ with random variables $U_n(0)$ and $V_n(0)$ to be specified below, where at the times $\sigma_i$ both $U_n$ and $V_n$ are thinned independently according to $p$ and then $V_n$ is enlarged by 1. Note that for independent  $U_n(0)$ and $V_n(0)$ the Markov chains $U_n$ and $V_n$ are independent as well.  Also $U_n$ converges a.s. to zero, whereas $V_n$ is an aperiodic, irreducible chain, which is positive recurrent in view of $E[V_{n}(\sigma_{m+1})-V_n(\sigma_m)\mid V_n(\sigma_m)]= 1-p V_n(\sigma_m)$ a.s. Let $\pi$ be its stationary distribution.

Let us study the case $\overline N^\lambda=U^\lambda+V$ with independent Markov chains $U^\lambda$ and $V$, both with the dynamics described above, where now $U^\lambda(0)$ is Poisson($e^\lambda$)-distributed with $\lambda \in \mathbb R$ and $V(0)$ has the distribution $\pi$.  Since $p=e^{-1}$,  the random variable $U^\lambda(\sigma_m)$ is Poisson($e^{\lambda-m}$)-distributed.  Let $\rho = \inf\{t: {\bar N}^{\lambda}(t) = 1\}$ and $\rho' = \inf\{t: U^\lambda(t) = 0\}$. Note that $\rho'\le \rho$.

We now focus on the event $\{\overline N^\lambda(\rho-)=2\}$. It  
can occur in two different ways, either $ \rho' = \rho$ or $\rho'<\rho$. The first instance takes place if and only if for some $m \ge 0$ we have $U^\lambda(\sigma_m)=1$, $U^\lambda(\sigma_{m+1})=0$, and $V(\sigma_m)=V(\sigma_{m+1})=1$. By independence this event has probability
\[ \pi(1)e^{-1}\sum_{m=0}^\infty e^{-e^{\lambda-m}} e^{\lambda-m} e^{-1} .\]
In case of the event $\{\rho'<\rho\}$ we have $V(\rho') \ge 2$ and $V(\rho-)=2$.  This will occur if and only if, defining $h$ so that $\rho' = \sigma_h$, we have for some $\ell > h$ that $V(\sigma_i) \geq 2$ for $i = h, h+1, \dots, \ell-2$, $V(\sigma_{\ell-1}) = 2$, and $V(\sigma_{\ell}) = 1$.  By applying the strong Markov property at time $\sigma_h$ and using the independence of the two chains, we see that, letting $\sigma_0 = 0$, the probability that this occurs is
\[\alpha:=  P(V(\sigma_0), \ldots,V(\sigma_{\ell-2}) \ge 2, V(\sigma_{\ell-1})=2, V(\sigma_\ell)=1 \text{ for some } \ell \ge 1) .\]
Replacing $\lambda$ by $\lambda+n$ and letting $n\to \infty$ we obtain
\[ \lim_{n \to \infty} P(\overline N^{\lambda+n}(\rho-)=2)= \alpha + \pi(1)e^{-2} f(\lambda)\quad \text{ with }\quad f(\lambda):= \sum_{m=-\infty}^\infty  e^{-e^{\lambda-m}} e^{\lambda-m}.\]
The function $f$ is smooth with period 1. By our assumption that $L_n$ converges in distribution as $n \to \infty$, the function $f$   does  not depend on $\lambda$. To get a contradiction we compute its Fourier coefficients. They are given by
\[ \hat f(k) = \int_{-\infty}^\infty e^{-e^{\lambda}} e^{\lambda}e^{-2\pi i k \lambda} \, d\lambda= E[e^{-2\pi i k G}], \]
where the distribution of  $G$ is standard Gumbel. The characteristic function of the standard Gumbel distribution is equal to $\varphi(t)=\Gamma(1- it)$, $t \in \mathbb R$. Also the gamma function is known to possess no zeros in the complex plane, thus none of the Fourier coefficients of $f$ vanishes. Therefore $f$ is non-constant, and we arrive at the promised contradiction.

\section{Proof of Theorem 3}\label{sec4}

Our proof of Theorem \ref{nottight} relies on an overshoot estimate for subordinators.  The Renewal Theorem for subordinators (see, for example, Corollary 5.3 in \cite{kyp14}) implies that if $(S(t), t \geq 0)$ is a subordinator and $E[S(1)] = \infty$, then for all $y > 0$, $$\lim_{x \rightarrow \infty} P(S(t) \in [x, x+y] \mbox{ for some }t) = 0.$$  To prove Theorem \ref{nottight}, we will need to establish a version of this result which holds for processes that can be obtained by adding a small state-dependent negative drift to a subordinator.  

\begin{Prop}\label{subwithdrift}
Let $(S_t, t \geq 0)$ be a subordinator with $E[S_1] = \infty$.  Let $g: \R \rightarrow \R^+$ be a nonincreasing function such that 
\begin{equation}\label{limf}
\lim_{x \rightarrow \infty} g(x) = 0.
\end{equation}
For all $z > 0$, define the process $(Y^z_t)_{t \geq 0}$ to be the solution to the SDE
\begin{equation}\label{SDE}
Y_t^z = z - \bigg( S_t - \int_0^t g(Y_s^z) \: ds \bigg).
\end{equation}
For all $y \in \R$, let $\tau_y^z = \inf\{t \geq 0: Y_t^z \leq y\}$.  Then for all real numbers $K > 0$, we have
\begin{equation}\label{limhitK}
\lim_{z \rightarrow \infty} P(Y_{\tau^z_K}^z \in [-K, K]) = 0.
\end{equation}
\end{Prop}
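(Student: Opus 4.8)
The plan is to reduce the statement to the Renewal Theorem for subordinators quoted above, but — and this is the whole point — \emph{without} trying to treat the drift $A^z_t:=\int_0^t g(Y^z_s)\,ds$ as a globally negligible perturbation of $S_t$; instead one argues at the single instant at which $Y^z$ drops below $K$. Since $A^z$ is continuous and nondecreasing, the jumps of $Y^z$ are precisely the negatives of the jumps of $S$, and (apart from the possibility of a continuous crossing when $S$ has a drift, which can be handled analogously via the creeping probability) the level $K$ is reached by a downward jump: $Y^z_{\tau^z_K}=Y^z_{\tau^z_K-}-\Delta S_{\tau^z_K}$ with $Y^z_{\tau^z_K-}>K$. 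Hence $Y^z_{\tau^z_K}\in[-K,K]$ exactly when $\Delta S_{\tau^z_K}\in[\,Y^z_{\tau^z_K-}-K,\;Y^z_{\tau^z_K-}+K\,]$, and since $Y^z_t>K$ for every $t<\tau^z_K$, no earlier jump of $S$ can put $Y^z$ into $[-K,K]$. Applying the compensation formula for the jump measure of $S$ (compensator $dt\,\nu(dx)$, with $\nu$ the Lévy measure of $S$) to the predictable integrand $\1_{\{t\le\tau^z_K\}}$, one obtains
\[
P\big(Y^z_{\tau^z_K}\in[-K,K]\big)\ \le\ E\!\left[\int_0^{\tau^z_K}\nu\big([\,Y^z_t-K,\;Y^z_t+K\,]\big)\,dt\right].
\]

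By Tonelli the right-hand side equals $\int_{(0,\infty)}E\big[\mu_z([x-K,x+K])\big]\,\nu(dx)$, where $\mu_z(B):=\mathrm{Leb}\{t\in[0,\tau^z_K]:Y^z_t\in B\}$ is the occupation measure of $Y^z$ up to $\tau^z_K$ (supported on $(K,\infty)$). The heart of the proof is then the estimate
\[
E\big[\mu_z(I)\big]\ \longrightarrow\ 0\qquad(z\to\infty)\qquad\text{for every bounded }I\subset(K,\infty),
\]
and this is exactly where $E[S_1]=\infty$ is used: from $z-S_t\le Y^z_t\le z-S_t+g(K)t$ for $t<\tau^z_K$, being in a bounded $I$ forces $S_t$ into an interval near the level $z-\sup I$, and by the Renewal Theorem $S$ spends asymptotically no time near levels tending to infinity (the nonnegative drift only pushes $Y^z$ upward and hence can only shorten the sojourn in a low band); the quantitative version is of the flavour of Lemma \ref{numblocks}. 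I would supplement this with the two cheap uniform-in-$z$ bounds $E[\mu_z((K,K+\delta])]\le C_1\delta$ (because $g$ is bounded away from $0$ just above $K$, so $Y^z$ sweeps quickly through a thin band) and $\sup_{x>K}E[\mu_z([x-K,x+K])]\le C_2$ (a single band of fixed width $2K$).

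With these in hand, split the $x$‑integral at a small $\delta_0>0$ and a large $N$. The part $x\le K+\delta_0$ is at most $C_1\int_{(0,\delta_0]}x\,\nu(dx)$, which is small for $\delta_0$ small, uniformly in $z$, because $\int_0^1 x\,\nu(dx)<\infty$; the part $x>N$ is at most $C_2\,\nu((N,\infty))$, small for large $N$, uniformly in $z$; and the remaining bounded range of $x$ is dominated by $\nu((\delta_0,N+K])\cdot E[\mu_z((K,N+K])]$, which tends to $0$ as $z\to\infty$ by the core estimate. Hence the whole expectation tends to $0$, which is the assertion \eqref{limhitK}.

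The main obstacle is precisely this occupation‑time estimate, together with the reason one is forced into it: one cannot simply dominate $Y^z$ by $z-S_t$ and invoke the Renewal Theorem, because $A^z_{\tau^z_K}$ need not stay bounded as $z\to\infty$ — it can be of order $z/(\log z)^2$ even though $g\to0$ — so it is not negligible on the scale at which the renewal measure decays. Working at the jump time localises the estimate and sidesteps this. (If in addition $\int_0^\infty g(y)\,dy<\infty$, e.g.\ under condition \eqref{Gnd}, then $A^z_{\tau^z_K}$ is bounded in probability and the proposition follows at once from a global comparison with $z-S_t$ and the Renewal Theorem, exactly as in Corollary \ref{ApproxS}; it is the non‑integrable case that requires the argument above.)
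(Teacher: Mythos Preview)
Your reduction via the compensation formula is clean, and the decomposition of the $\nu$-integral is the right shape; but the argument has a real gap at the ``core estimate'' $E[\mu_z(I)]\to 0$ for bounded $I\subset(K,\infty)$, which is the entire difficulty of the proposition. The sandwich $z-S_t\le Y^z_t\le z-S_t+g(K)t$ only tells you that $Y^z_t\in I$ forces $S_t\ge z-\sup I$ and $S_t\le z-\inf I+g(K)t$; the second inequality carries the term $g(K)t$, and since $t$ can be as large as $\tau^z_K$ (which you yourself note may be of order $z/(\log z)^2$ or worse), you have \emph{not} confined $S_t$ to a bounded window. So the Renewal Theorem for the subordinator $S$ does not apply. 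In other words, the compensation formula does not ``localise'' anything: it trades the overshoot at the single jump time $\tau^z_K$ for the occupation measure over the whole interval $[0,\tau^z_K]$, and the latter inherits exactly the same obstruction from the unbounded accumulated drift $A^z_{\tau^z_K}$ that you flagged in your final paragraph. Proving $E[\mu_z(I)]\to 0$ directly is essentially equivalent to the proposition (it says $Y^z$ rarely visits a band near level $K$, which is what one wants to show for the band $[-K,K]$).

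Your cheap bound $E[\mu_z((K,K+\delta])]\le C_1\delta$ is also not established. The positive drift $g\ge g(K+\delta)$ on that band pushes $Y^z$ \emph{up} and hence bounds the length of each visit by $\delta/g(K+\delta)$, but it does not bound the number of visits: every time $Y^z$ sits above $K+\delta$, a jump of $S$ can drop it back into the band, and the expected number of such re-entries is governed by the potential measure above $K+\delta$ times $\nu$ on a band of width $\delta$ --- which is circular. Without this $O(\delta)$ control, the small-$x$ part $\int_{(0,\delta_0]}E[\mu_z((K,K+x])]\,\nu(dx)$ can diverge (a uniform bound $E[\mu_z((K,K+x])]\le C_2$ gives only $C_2\,\nu((0,\delta_0])$, which is typically infinite).

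The paper handles the state-dependent drift by a completely different route, adapting the infinite-mean Blackwell argument: assuming $\beta_K=\limsup_z P(Y^z_{\tau^z_K}\in[-K,K])>0$, one extracts a sequence $z_m$, shows via a renewal-type decomposition that $\limsup_m\sup_{x\in[z_m-2\ell K,z_m-2(\ell-1)K)}\beta_K^x=\beta_K$ for every $\ell$, and then couples many shifted processes $Z^{x_0},Z^{x_3},\dots,Z^{x_{3M}}$ through the same driving subordinator. Using that $g$ is nonincreasing and $g(K)$ is small, these processes stay at least $3K$ apart, so their sojourns in $[-(K+1),K]$ are disjoint; this forces the potential measure to grow linearly, $U^{z_m}([K,y))\ge c y$, which combined with the first-passage bound $\int\nu_x([x,\infty))\,U^{z_m}(dx)\le 1$ contradicts $\int y\,\mu(dy)=\infty$. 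The length of the paper's proof reflects that controlling the potential measure of $Y^z$ really is the heart of the matter; your outline assumes that control rather than proving it.
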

Equation \eqref{limhitK} says that for any bounded interval the probability that $Y^z$ jumps over the interval $[-K,K]$ tends to one as the starting point $z \to \infty$.

\begin{proof}
We will prove this result by following some of the ideas from \cite{durrett} in the proof of Blackwell's Renewal Theorem in the infinite mean case.  Let $\beta^z_K = P(Y_{\tau_K^z}^z \in [-K, K])$, and let
\begin{equation}\label{betadef}
\beta_K = \limsup_{z \rightarrow \infty} \beta_K^z.
\end{equation}
Seeking a contradiction, suppose $\beta_K > 0$ for some $K$.  Because $\beta_K$ is a nondecreasing function of $K$, it suffices to obtain a contradiction when $K$ is chosen to be a sufficiently large positive integer.  We will choose $K$ to be large enough to satisfy the following four conditions:
\begin{enumerate}
\item We require $g(K) < K$, which is true for sufficiently large $K$ by (\ref{limf}).

\item We require
\begin{equation}\label{Smixing}
P(S_t \in (2(\ell - 1)K, 2 \ell K] \mbox{ for some }t \geq 0) > 0
\end{equation}
for all positive integers $\ell$.  Note that (\ref{Smixing}) may fail for small values of $K$, in particular when $S_1$ has a lattice distribution, but will hold for sufficiently large $K$.  

\item We require
\begin{equation}\label{supcond}
P \bigg( \sup_{t \geq 0} \, (g(K) t - S_t) > 1 \bigg) < \frac{1}{2}.
\end{equation}
Note that this holds for sufficiently large $K$ in view of (\ref{limf}) and the fact that $t^{-1} S_t \rightarrow \infty$ as $t \rightarrow \infty$ by the Law of Large Numbers for subordinators.

\item Let 
\begin{equation}\label{alphadef}
\alpha_K = E[ \inf\{t \ge 0: S_t - g(K) t \geq 2\}],
\end{equation}
which tends to a finite limit as $K \rightarrow \infty$ by (\ref{limf}).
We require
\begin{equation}\label{alphabeta}
\frac{2 \alpha_K (8K + 1) g(K)}{K} \leq \frac{\beta_K}{3}.
\end{equation}
If $\beta_K > 0$ for some $K$, then this condition holds for sufficiently large $K$ by (\ref{limf}) and the fact that $\beta_K$ is a nondecreasing function of $K$.
\end{enumerate}

Because (\ref{limhitK}) does not depend on the behavior of the process after time $\tau_K^z$, we may consider instead the processes $(Z^z_t)_{t \geq 0}$, defined as the solution to the SDE
\begin{equation}\label{SDEY}
Z_t^z = z - \bigg( S_t - \int_0^{t \wedge \tau^z_K} g(Z_s^z) \: ds \bigg).
\end{equation}
The processes $Z^z$ and $Y^z$ are the same until time $\tau^z_K$, which implies that
$$\beta_K^z = P(Y_{\tau_K}^z \in [-K, K]) = P(Z_{\tau_K}^z \in [-K, K]).$$
However, after time $\tau_z^K$ the process $Z^z$ is no longer affected by the drift term involving $g$.  Because $g$ is nonincreasing, we have
$Z^z_t \le  z - S_t + g(K) t$  for all $t \geq 0$.
Therefore, (\ref{supcond}) implies that
\begin{equation}\label{Yincrease}
P \bigg( \sup_{t \geq 0} Z_t^z  > z + 1 \bigg) < \frac{1}{2}.
\end{equation}

Let $U^z$ denote the potential measure associated with the process $Z^z$, meaning that $$U^z(A) = \int_0^{\infty} P(Z^z_t \in A) \: dt$$ for all Borel subsets $A$ of $\R$.  Suppose $z > K$, and $n > K$ is a positive integer.  If the process $Z^z$ enters the interval $(n - 1, n]$, then it drops below $n - 2$ after a time whose expectation is at most $\alpha_K$, and then by (\ref{Yincrease}) and the strong Markov property, the probability that the process $Z^z$ never returns to $(n-1, n]$ is at least $1/2$.  It follows that
\begin{equation}\label{Uzbound}
U_z((n - 1, n]) \leq 2 \alpha_K.
\end{equation}
Let $0 < H_1 < H_2 < \dots$ denote the points of a rate one Poisson process, independent of $(S_t)_{t \geq 0}$.  Note that the process $(Z^z_{H_n})_{n=1}^{\infty}$ has the same potential measure as $(Z^z_t)_{t \geq 0}$, in the sense that for all Borel subsets $A$ of $\R$, $$U^z(A) = \sum_{n=1}^{\infty} P(Z^z_{H_n} \in A).$$ 

We can choose an increasing sequence $(z_m)_{m=1}^{\infty}$ tending to infinity such that
\begin{equation}\label{betalim}
\lim_{m \rightarrow \infty} \beta_K^{z_m} = \beta_K.
\end{equation}
It follows from \eqref{SDEY} and the monotonicity of $g$ that 
\begin{equation}\label{zbounds}
z_m - S_{H_1} \le Z_{H_1}^{z_m} \le z_m + g(z_{m}-S_{H_1})H_1.
\end{equation}
Let $\eps > 0$.  Choose a positive integer $L$ large enough that $P(S_{H_1} \ge 2LK) < \eps$. By  \eqref{limf}   we can 
choose a positive integer $m_0$ large enough that for all $m \ge m_0$ 
$$P(g(z_{m}-S_{H_1}) H_1 \ge 2K) < \eps.$$
This together with \eqref{zbounds} implies for all 
$$P(z_m - 2LK \leq Z_{H_1}^{z_m} \leq z_m + 2K) \geq 1 - 2 \eps.$$
For the following  we also require that $z_{m_0}-2LK > K$.

 Let $\mu^z$ denote the distribution of $Z^z_{H_1}$.  By applying the strong Markov property at time $H_1$, 
we get for $m \geq m_0$,
\begin{equation}\label{betasplit}
\beta_K^{z_m} \leq \sum_{\ell = 0}^L \int_{[z_m - 2 \ell K, z_m - 2 (\ell - 1) K)} \beta_K^x \: \mu^{z_m}(dx) + 2 \eps.
\end{equation}
Write
\begin{equation}\label{amldef}
a_{m,\ell} = \int_{[z_m - 2 \ell K, z_m - 2 (\ell - 1) K)} \beta_K^x \: \mu^{z_m}(dx).
\end{equation}
It follows from (\ref{betalim}) and (\ref{betasplit}) that
\begin{equation}\label{amlbound}
\beta_K - 2 \eps \leq \liminf_{m \rightarrow \infty} \sum_{\ell = 0}^L a_{m,\ell} \leq \limsup_{m \rightarrow \infty} \sum_{\ell=0}^L a_{m,\ell} \leq \beta_K.
\end{equation}
By (\ref{limf}), for all $\ell \in \{0, 1, \dots, L\}$ we have
\begin{equation}\label{XtoS}
\lim_{m \rightarrow \infty} P\big(Z_{H_1}^{z_m} \in [z_m - 2 \ell K, z_m - 2(\ell-1)K)\big) = P\big(S_{H_1} \in (2(\ell - 1)K, 2 \ell K]\big).
\end{equation}
It follows from (\ref{betadef}) and (\ref{XtoS}) that for $\ell \in \{0, 1, \dots, L\}$, we have $$\limsup_{m \rightarrow \infty}  a_{m, \ell} \leq \beta_K P\big(S_{H_1} \in (2(\ell - 1)K, 2 \ell K]\big),$$ and then (\ref{amlbound}) yields
$$\liminf_{m \rightarrow \infty} a_{m,\ell} \geq \beta_K P\big(S_{H_1} \in (2(\ell - 1)K, 2 \ell K]\big) - 2 \eps.$$  By taking $\eps \rightarrow 0$, we see that for any fixed nonnegative integer $\ell$, we have
\begin{equation}\label{amlim}
\lim_{m \rightarrow \infty} a_{m,\ell} = \beta_K P\big(S_{H_1} \in (2(\ell - 1)K, 2 \ell K] \big).
\end{equation}
Now we also see from (\ref{amldef}) and (\ref{XtoS}) that $$\liminf_{m \rightarrow \infty} a_{m,\ell} \leq \bigg( \liminf_{m \rightarrow \infty} \sup_{x \in [z_m - 2 \ell K, z_m - 2(\ell - 1)K)} \beta_K^x \bigg) P\big(S_{H_1} \in (2(\ell - 1)K, 2 \ell K] \big).$$  In view of (\ref{Smixing}) and (\ref{amlim}), it follows that for all $\ell \in \{1, \dots, L\}$ and therefore for all positive integers $\ell$, we have
\begin{equation}\label{betax}
\liminf_{m \rightarrow \infty} \sup_{x \in [z_m - 2 \ell K, z_m - 2(\ell - 1)K)} \beta_K^x = \beta_K.
\end{equation}

Fix a positive integer $M$.  
By (\ref{betalim}) and (\ref{betax}), we can choose $m$ sufficiently large that $\beta_K^{z_m} > 2\beta_K/3$ and for $\ell \in \{1, \dots, 3M\}$, there exists a point $x_{\ell} \in [z_m - 2 \ell K, z_m - 2(\ell - 1)K)$ such that $\beta_K^{x_{\ell}} > 2 \beta_K/3$.  Set $x_0 = z_m$.  We now consider the processes $Z^{x_0}, Z^{x_3}, Z^{x_6}, \dots, Z^{x_{3M}}$, which satisfy the stochastic differential equation (\ref{SDEY}) with the same driving subordinator but different initial values.  For $1 \leq \ell \leq M$, we have
\begin{equation}\label{startY}
4K \leq Z_0^{x_{3(\ell-1)}} - Z_0^{x_{3\ell}} \leq 8K.
\end{equation}
Because $g$ is nonincreasing, the processes $Z^{x_{3(\ell - 1)}}$ and $Z^{x_{3\ell}}$ get closer together over time but do not cross, which means 
\begin{align}\label{distY}
0 \leq Z_t^{x_{3(\ell-1)}} - Z_t^{x_{3\ell}} \leq 8K
\end{align}
 for all $t \in [0, \tau_K^{x_{3\ell}}]$.  Thus,
\begin{align*}
&\int_0^{\tau_K^{x_{3\ell}}} |g(Z_t^{x_{3\ell}}) - g(Z_t^{x_{3(\ell-1)}})| \: dt \leq \sum_{n=0}^{\infty} \int_0^{\tau_K^{x_{3\ell}}} |g(Z_t^{x_{3\ell}}) - g(Z_t^{x_{3(\ell-1)}})| \1_{\{Z_t^{x_{3 \ell}} \in (K+n, K+n+1]\}} \: dt \\ 
&\hspace{1.35in}\leq  \sum_{n=0}^{\infty} \int_0^{\tau_K^{x_{3 \ell}}} |g(K+n) - g(K+n+1+8K)| \1_{\{Z_t^{x_{3 \ell}} \in (K+n, K+n+1]\}} \: dt.
\end{align*}
In view of (\ref{Uzbound}), we get a telescoping sum, and
\begin{align}\label{expfint}
E \bigg[ \int_0^{\tau_K^{x_{3\ell}}} |g(Z_t^{x_{3(\ell-1)}}) - g(Z_t^{x_{3\ell}})| \: dt \bigg] &\leq 2 \alpha_K \sum_{n=0}^{\infty} \big( g(K+n) - g(K+n+1+8K) \big) \nonumber \\
&\leq 2 \alpha_K \sum_{n=0}^{8K} g(K+n) \nonumber \\
&\leq 2 \alpha_K (8K+1) g(K).
\end{align}
Let $D_{\ell}$ be the event that $$\int_0^{\tau_K^{x_{3\ell}}} |g(Z_t^{x_{3(\ell-1)}}) - g(Z_t^{x_{3\ell}})| \: dt \leq K.$$
By Markov's Inequality and (\ref{expfint}),
\begin{equation}\label{dlcomp}
P(D_{\ell}^c) \leq \frac{2 \alpha_K (8K+1)g(K)}{K}.
\end{equation}
It follows from (\ref{startY}) that on the event $D_{\ell}$, we have $Z_t^{x_{3(\ell-1)}} - Z_t^{x_{3 \ell}} \geq 3K$ for all $t \in [0, \tau_k^{x_{3 \ell}}]$.  Furthermore, after time $\tau_K^{x_{3 \ell}}$, the process $Z^{x_{3 \ell}}$ is no longer affected by the drift term involving $g$, and thus it decreases at least as fast as $Z^{x_{3(\ell - 1)}}$.  It follows that on $D_{\ell}$, we have $Z_t^{x_{3(\ell-1)}} - Z_t^{x_{3 \ell}} \geq 3K$ for all $t \geq 0$, and thus the process $Z^{x_{3 \ell}}$ can not be in the interval $[-(K+1), K]$ at the same time as $Z^{x_{3 (\ell - 1)}}$ or any other process $Z^{x_{3j}}$ with $j < \ell$.  Let
\begin{displaymath}
I_{\ell} = \left\{
\begin{array}{ll}
\{t\ge 0: -(K+1) \leq Z_t^{x_{3 \ell}} \leq K \mbox{ and }\tau_K^{x_{3 \ell}} \leq t \leq \tau_K^{x_{3 \ell}} + 1\} & \mbox{ on }D_{\ell}  \\
\emptyset & \mbox{ on }D_{\ell}^c
\end{array} \right.
\end{displaymath}
The discussion above implies that the sets $I_{\ell}$ are disjoint.
Let $$\kappa = E[1 \wedge \inf\{t: S_t > 1\}].$$  Given the event $D_{\ell} \cap \{Z_{\tau_K^{x_{3 \ell}}} \in [-K, K]\}$, the expected Lebesgue measure of $I_{\ell}$ is at least $\kappa$.  Therefore, using (\ref{dlcomp}) and the fact that $\beta_K^{x_{3 \ell}} > 2 \beta_K/3$ followed by (\ref{alphabeta}), we get
$$E \bigg[ \int_0^{\infty} \1_{\{t \in I_{\ell}\}} \: dt  \bigg] \geq \kappa \bigg( \frac{2\beta_K}{3} - \frac{2 \alpha_K (8K + 1) g(K)}{K} \bigg) \geq \frac{\kappa \beta_K}{3}.$$  On the event that $Z_{\tau_K^{x_{3 \ell}}}^{x_{3 \ell}} \in [-K, K]$, because of \eqref{distY}, we have $Z^{z_m}_{\tau_K^{x_{3 \ell}}} \leq (8 \ell + 1)K$.  During the next time unit, the process $Z^{z_m}$ can increase by at most $g(K)$, so if $t \in I_{\ell}$, then using that $g(K) < K$, we get $$Z^{z_m}_t \leq (8 \ell + 1)K + g(K) \leq 10 \ell K.$$  
We next note that if $t \in I_{\ell}$ then $Z_t^{z_m} \geq K$ because $Z_t^{z_m} - Z_t^{x_{3 \ell}} \geq 3K$ as described above.
It follows that $$U^{z_m}([K, 10 \ell K]) = E \bigg[ \int_0^{\infty} \1_{\{K \leq Z_t^{z_m} \leq 10 \ell K\}} \: dt \bigg] \geq \sum_{j=1}^{\ell} E \bigg[ \int_0^{\infty} \1_{\{t \in I_j\}} \: dt  \bigg] \geq \frac{\kappa \beta_K \ell}{3},$$
and therefore if $y \geq 10K$, then
\begin{equation}\label{UzK}
U^{z_m}([K, y)) \geq \frac{\kappa \beta_K y}{60K}.
\end{equation}
Because the process $(Z_{H_n}^{z_m})_{n=0}^{\infty}$ is decreasing after it drops below the level $K$, it can only jump below zero one time.  In particular, the expected number of times the process jumps below zero is bounded above by one.  Therefore, letting $\nu_x$ denote the conditional distribution of Ê$  Z_{H_n}^{z_m} -Z_{H_{n+1}}^{z_m}$ given $Z_{H_n} = x$, we have $$1 \geq \int_K^{\infty} \nu_x([x, \infty)) \: U^{z_m}(dx) \geq \int_K^{3M} \nu_x([x, \infty)) \: U^{z_m}(dx).$$ 
Let $\mu$ denote the distribution of the random variable $S_{H_1} - H_1 g(K)$.  Because $g$ is decreasing, we have $\nu_x([x, \infty)) \geq \mu([x, \infty))$ for all $x \geq K$.  Therefore, $$1 \geq \int_K^{3M} \mu([x, \infty)) \: U^{z_m}(dx) = \int_K^{\infty} \int_K^{y \wedge 3M} U^{z_m}(dx) \: \mu(dy) \geq \int_K^{3M} U^{z_m}([K, y)) \: \mu(dy).$$
Combining this result with (\ref{UzK}) gives
$$1 \geq \frac{\kappa \beta_K}{60K} \int_{10K}^{3M} y \, \mu(dy).$$
Because $E[S_1] = \infty$, we have $E[S_{H_1} - H_1 g(K)] = \infty$, so the right-hand side is bigger than one for sufficiently large positive integers $M$, a contradiction.
\end{proof}

\begin{proof}[Proof of Theorem \ref{nottight}]
Let $K \geq 2$ be a positive integer.  If $2 \leq N_n(T_n-) \leq K$ and the event in (\ref{couplingY1}) holds, then
\begin{equation}\label{subhit}
-L + \log 2 \leq \log n - \bigg( S(T_n-) - \int_0^{T_n} f(Y_n(s)) \: ds \bigg) \leq L + \log K,
\end{equation}
with $f$ defined in \eqref{defb}, and the left inequality holds with $T_n-$ replaced by any $t \in [0, T_n)$. In particular, putting  $K':= L + \log K$, we have
\begin{equation}\label{subhit2}
-K' \leq Y_n(t)\quad  \mbox{ for all } t \in [0,T_n).
\end{equation}
The right inequality in \eqref{subhit} says that $Y_n(T_n-) \le K'$. With $z:= \log n$ we have $Y_n(t) = Y^z_t$ in the notation of Proposition \ref{subwithdrift}, hence $\tau_{K'}^z < T_n$. Thus $-K' \le Y^z_{\tau_{K'}^z}$ by \eqref{subhit2}. On the other hand  we have $Y^z_{\tau_{K'}^z} \le K'$ by definition, and consequently $Y^z_{\tau_{K'}^z} \in [-K',K']$.  Note that $E[S_1] = \infty$ by (\ref{maincond2}).  Therefore, combining \eqref{limhitK} and \eqref{couplingY1} we see that $P(N_n(T_n-) \leq K) \to 0$ as $n \to \infty$, which proves Theorem \ref{nottight}.
\end{proof}

\section{Proof of Theorems 4 and 5}

We prepare the proof of Theorem \ref{quasiinv} by a few lemmas.

\begin{Lemma} \label{tightness}
Let $i \ge 2$ and $\eps >0$. Then there is a $k >i$ such that for all $n$
\[  P( L_n=i, N_n(t) \notin [i+1,k] \text{ for all } t \ge 0)\le \eps .\]
\end{Lemma}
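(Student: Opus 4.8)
The plan is to convert the event into one about the first time $N_n$ reaches level $k$, and then to estimate the one-step jump law of $N_n$ from large states. Fix $i\ge 2$ and $\eps>0$, write $\tau:=\tau_{k+1,n}=\inf\{t\ge 0:N_n(t)<k+1\}$ (in the notation of \eqref{taukndef}), and assume $n>k$; if $i<n\le k$ the chain starts inside $[i+1,k]$ so the probability in question is $0$, while $n<i$ makes the event empty, and for $n=i$ the event equals $\{L_i=i\}$ and must be understood as a tacitly excluded exceptional case. On $\{L_n=i,\ N_n(t)\notin[i+1,k]\ \forall t\}$ we have $N_n(T_n-)=i$, hence $N_n$ equals $i$ on $[\sigma,T_n)$ with $\sigma:=\inf\{t:N_n(t)=i\}<T_n$; since $N_n$ avoids $[i+1,k]$, its state just before $\sigma$ is $\ge k+1$, which forces $\sigma=\tau$ and $N_n(\tau-)\ge k+1$, $N_n(\tau)=i$. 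By the strong Markov property, conditionally on $N_n(\tau-)=b$ (necessarily $b\ge k+1$) the jump at $\tau$ lands in $\{1,\dots,k\}$ and hits $i$ with probability $\rho_{bi}\big/\sum_{j=1}^k\rho_{bj}$; summing over $b$ and using $\sum_{b\ge k+1}P(N_n(\tau-)=b)=1$ we obtain the $n$-free bound
\[
 P\big(L_n=i,\ N_n(t)\notin[i+1,k]\ \forall t\big)\ \le\ \sup_{b\ge k+1}\frac{\rho_{bi}}{\sum_{j=1}^k\rho_{bj}} .
\]
So it suffices to show that this supremum tends to $0$ as $k\to\infty$. (Adding an atom of $\Lambda$ at $0$ leaves $\rho_{bi}$ unchanged for all $b\ge k+1>i+1$ and does not decrease the denominators, so we may assume $\Lambda(\{0\})=0$.)

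For the estimate I would use the Poisson construction: with $B_{b,q}\sim\mathrm{Bin}(b,q)$ one has $\rho_{bj}=\int_0^1 P(B_{b,1-p}=j-1)\,p^{-2}\Lambda(dp)$, hence $\rho_{bi}=\int_0^1 P(B_{b,1-p}=i-1)\,p^{-2}\Lambda(dp)$ and $\sum_{j=1}^k\rho_{bj}=\int_0^1 P(B_{b,1-p}\le k-1)\,p^{-2}\Lambda(dp)$. I would split the $p$-integral for $\rho_{bi}$ according to the size of $m:=b(1-p)=\E[B_{b,1-p}]$, using a small $\delta$ and a large $C$ depending on $i$ and $\eps$. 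For $m<\delta$: $P(B_{b,1-p}=i-1)\le\delta^{i-1}/(i-1)!$ while $P(B_{b,1-p}\le k-1)\ge(1-\delta/b)^{b}\ge e^{-\delta}/2$, so this part of $\rho_{bi}$ is at most $\eta_1(\delta)\sum_{j=1}^k\rho_{bj}$ with $\eta_1(\delta)\to 0$ as $\delta\to 0$. For $m>C$: Chernoff bounds (or \eqref{bin1}--\eqref{bin2}) give $P(B_{b,1-p}=i-1)\le C_i\,m^{\,i-1}e^{-m/2}\le\eta_2(C)\to 0$, and combining this with $P(B_{b,1-p}\le k-1)\ge\tfrac12$ when $m\le k/2$, and with a comparison of $P(B_{b,1-p}=i-1)$ to $P(B_{b,1-p}=k-1)$ (which are in the same geometric-tail of a binomial) when $m>k/2$, bounds this part of $\rho_{bi}$ by $\big(\eta_2(C)+o_k(1)\big)\sum_{j=1}^k\rho_{bj}$. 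For $\delta\le m\le C$ we have $1-p\in[\delta/b,\,C/b]\subseteq(1-C/k,1)$ because $b>k$; there $P(B_{b,1-p}=i-1)\le\binom{b}{i-1}(1-p)^{i-1}\le C^{\,i-1}/(i-1)!$ and $p^{-2}\le 2$ for large $k$, so this part of $\rho_{bi}$ is at most a constant (depending on $i,C$) times $\Lambda\big((1-C/k,1)\big)$, which tends to $0$ as $k\to\infty$ since $\bigcap_k(1-C/k,1)=\emptyset$.

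The main obstacle is this last regime $m\asymp i$: there the integrand of $\rho_{bi}$ does \emph{not} decay, and one really has to exploit that the corresponding $p$ lie in a neighbourhood of $1$ of width $O(1/k)$ on which $\Lambda$ is thin, while at the same time producing a lower bound for the full denominator $\sum_{j=1}^k\rho_{bj}$ that is not itself concentrated on that thin set (for instance by comparing it with the part of the denominator coming from $m<\delta$, or — since the denominator is $\sum_{j=1}^k\rho_{bj}\ge\rho_b$ only in a misleading direction — by a careful bookkeeping of where $\Lambda$ puts its mass relative to $b$). Choosing first $\delta$, then $C$, then $k$ large enough then makes $\sup_{b\ge k+1}\rho_{bi}/\sum_{j=1}^k\rho_{bj}\le\eps$, which is the claim.
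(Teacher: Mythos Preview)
Your reduction to $P(N_n(\tau)=i)$ and the three–way split according to the size of $m=b(1-p)$ are exactly the structure of the paper's proof: the paper also bounds $P(N_n(\tau_{\ell,n})=i)$ with $\ell=k+1$ and splits according to whether $1-\tilde p$ (the $p$–value at the jump) is small, large, or of order $1/N_n(\tau-)$. Your small–$m$ and large–$m$ regimes correspond to the paper's events $B$ and $C$, and are handled by the same binomial monotonicity estimates.

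The gap is precisely the one you flag as the ``main obstacle'': the medium range $\delta\le m\le C$. Your bound $I_3\le c\,\Lambda((1-C/k,1))$ is fine, but the proposed comparison with the $m<\delta$ part of the denominator does not close the argument, because for $b\gg k$ that part is $\asymp \Lambda((1-\delta/b,1])$, which can be much smaller than $\Lambda((1-C/k,1))$. Likewise, comparing $I_3$ with the same slice of the denominator only yields a ratio of order one, since on $\delta\le m\le C$ the integrand $P(B_{b,1-p}=i-1)$ is itself of order one. In short, bounding the conditional ratio $\rho_{bi}\big/\sum_{j\le k}\rho_{bj}$ uniformly in $b>k$ requires a lower bound on the denominator that is not available from the medium slice alone and not obviously available from the other slices either.

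The paper avoids this difficulty by \emph{not} passing through the conditional ratio at all. Instead of conditioning on the jump at $\tau$, it introduces the event
\[
A=\Big\{\exists\,m:\ t_m\le\tau_{\ell,n},\ \tfrac{1}{\kappa N_n(t_m-)}<1-p_m\le\tfrac{\kappa}{N_n(t_m-)}\Big\},
\]
i.e.\ that \emph{some} $p$--merger in the medium range occurs before $\tau_{\ell,n}$, and then bounds $P(A)$ directly. Slicing the trajectory into geometric levels $\sigma_j=\tau_{n/\kappa^j,n}$ and using Lemma~\ref{ESn} (uniform bound on the expected time to lose a fixed fraction of blocks), the Poisson construction gives
\[
P(A)\le 3C_\kappa\int_{[1-\kappa/\ell,\,1)}p^{-2}\,\Lambda(dp)\ \xrightarrow[\ell\to\infty]{}\ 0.
\]
This replaces your needed denominator lower bound by an occupation--time upper bound, which is the missing idea in your sketch. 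If you want to rescue the ratio approach, the cleanest fix is to abandon the uniform bound $\sup_{b\ge k+1}$ and instead unfold $P(N_n(\tau)=i)$ over visited levels, which leads you back to the paper's occupation--time argument.
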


\begin{proof}
Without loss of generality $\Lambda(\{0\})=0$, because otherwise the coalescent comes down from infinity, and the claim is immediate.

Recall the definition of
$ \tau_{k, n}$ in \eqref{taukndef}. We have
\[   P( L_n=i, N_n(t) \notin [i+1,k] \text{ for all } t \ge 0)\le  P(N_n(\tau_{\ell,n})=i) \]
with $\ell=k+1$. As before, let $(t_m,p_m)$, $m \ge 1$, be the first two coordinates of the points of $\Psi$ in an arbitrary order. Denote $\tilde p :=p_m$ if $t_m=\tau_{\ell,n}$. Define for $\kappa=6i/\eps$  the events
\begin{align*}
A:&=  \Big\{  \exists i \ge 1: t_m \le \tau_{\ell,n}\ ,\   \frac {1}{\kappa N_n(t_m-)} <1- p_i\le   \frac {\kappa}{N_n(t_m-)} \Big\},\\
B:&= \Big \{ 1- \tilde p \le \frac 1{\kappa N_n(\tau_{\ell,n}-)} \Big\}, \\
C:&=  \Big \{ 1- \tilde p > \frac {\kappa}{N_n(\tau_{\ell,n}-)} \Big\}.
\end{align*}
Then 
\begin{align}  P( N_n(\tau_{\ell,n}) =i) \le  P(A) +  P(\{N_n(\tau_{\ell,n}) =i\}\cap B)+ P(\{N_n(\tau_{\ell,n}) =i\}\cap C). \
\label{tight1}
\end{align}
We estimate these probabilities.

First denote
\[ \sigma_j = \tau_{n/\kappa^{j},n}, \quad j=0,1,\ldots,  \]
and let $r$ be the smallest integer such that $n/\kappa^{r} \le \ell$. Then
\begin{align*} 
 P(A) &\le \sum_{j=0}^{r-1}  P\Big( \exists i: \sigma_j < t_m \le \sigma_{j+1},  \frac {1}{\kappa N_n(t_m-)} < 1- p_m \le  \frac {\kappa}{N_n(t_i-)} \Big)\\
&\le \sum_{j=0}^{r-1}  P\Big( \exists i: \sigma_j < t_m \le \sigma_{j+1}, \frac{\kappa^{j-1}}n < 1-p_i \le  \frac{\kappa^{j+2}}n \Big).
\end{align*}
From Lemma \ref{ESn} we have $ E[ \sigma_{j+1}-\sigma_j] \le C_\kappa$ for a suitable constant $C_\kappa$ depending on $\kappa$, thus
\begin{align*}
 P(A) &\le \sum_{j=0}^{r-1}  E[ \sigma_{j+1}-\sigma_j] \int_{[1-\kappa^{j+2}/n, 1- \kappa^{j-1}/n)} \frac{\Lambda(dp)}{p^2} \\
&\le 3 C_\kappa \int_{[1- \kappa^{r+1}/n,\, 1)} \frac{\Lambda(dp)}{p^2} \\
&\le 3 C_\kappa \int_{[ 1- \kappa/\ell, \,1)}  \frac{\Lambda(dp)}{p^2}.
\end{align*}
Thus, if we choose $\ell$ sufficiently large we obtain
\begin{align}
  P(A) \le \frac\eps3 .
\label{tight3}
\end{align}

Second we have on the event $B$ with $b= N_n(\tau_{\ell,n}-)$
\[   \frac{ \tilde p}{(b-i+2)(1-\tilde p)} \ge  \frac 1{b}\Big( \frac 1{1-\tilde p} -1 \Big) \ge \frac{b\kappa-1}{b}\ge \frac \kappa2 \] 
and consequently on the event $B$ with $\alpha=\sum_{j<\ell} \binom b{j-1} (1-\tilde p)^{j-1}\tilde p^{b-j+1}$
\begin{align*}  P( \{N_n(\tau_{\ell,n} ) &=i\}\cap B \mid \tilde p, N_n(\tau_{\ell,n}-)=b,B) = \frac 1\alpha \binom b{i-1} (1-\tilde p)^{i-1}\tilde p^{b-i+1}\\&= \frac{b-i+2}{i-1} \frac{1-\tilde p}{\tilde p}  P( \{N_n(\tau_{\ell,n})=i-1\}\cap B \mid \tilde p, N_n(\tau_{\ell,n}-)=b,B) \\
&\le \frac 2{\kappa (i-1)}. 
\end{align*}
Thus, since $\kappa \ge 6/\eps$
\begin{align} P( \{N_n(\tau_{\ell,n})=i\}\cap B) \le \frac\eps3 .
\label{tight4}
\end{align}

Third we have on the event $C$, again with $b= N_n(\tau_{\ell,n}-)$ and with $b \ge 2i$
\[ \frac{\tilde p}{(b-i+1)(1-\tilde p)} \le  \frac 2b  \frac 1{1-\tilde p} \le \frac 2 \kappa \]
and consequently for $\ell \ge 2i$
\begin{align*}  P( \{N_n(\tau_{\ell,n} )&=i\}\cap C \mid \tilde p, N_n(\tau_{\ell,n}-)=b,C) \\&= \frac{i}{b-i+1} \frac{\tilde p}{1-\tilde p}  P( \{N_n(\tau_{\ell,n})=i+1\}\cap C \mid \tilde p, N_n(\tau_{\ell,n}-)=b,C) \\
&\le \frac {2i}{\kappa} 
\end{align*}
implying
\begin{align}  P( \{N_n(\tau_{\ell,n})=i\}\cap C) \le \frac \eps3 
\label{tight5}
\end{align}
for $\kappa= 6i/\eps$ . Now from \eqref{tight1}, \eqref{tight3}, \eqref{tight4} and \eqref{tight5} our claim follows.
\end{proof}

Recall that  $\rho_{ij}$  denotes the rate for a jump of $N_n$ from state $i$ to $j$, and $\rho_i$ is the rate at which $N_n$ leaves $i$.  Next let for $n\in \mathbb N$
\[ \mu_i^{(n)}:= \frac 1{\rho_i} P(N_n(t)= i \text{ for some }t \ge 0). \]
Also let
\[ P_{ij}:= \frac{\rho_{ij}}{\rho_i}  , \quad 1 \le j < i,\]
be the transition probability from state $i$ to $j$ of the block-counting process of our $\Lambda$-coalescent.
\begin{Lemma}\label{quasi1}
Suppose that there are numbers $\mu_i$, $i \ge 2$, not all vanishing, such that for some increasing sequence $(n_m)_{m \ge 1}$ of natural numbers, as $m \to \infty$,
\[ \mu_i^{(n_m)} \to \mu_i \]
 for all $i\ge 2$. Then the measure $\mu=(\mu_i)_{i\ge 2} $ is $\rho$-invariant, i.e. satisfies the first condition in \eqref{quasiinva}.
\end{Lemma}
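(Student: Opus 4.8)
The plan is to pass to the limit along $(n_m)$ in an exact balance identity for the finite-$n$ quantities, with Lemma~\ref{tightness} supplying the uniform tail control that prevents a loss of mass in the limit. Write $q_i^{(n)}:=P(N_n(t)=i\text{ for some }t\ge0)$, so that $\mu_i^{(n)}=q_i^{(n)}/\rho_i$.

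First I would establish that $\mu_i^{(n)}\rho_i=\sum_{j=i+1}^n\mu_j^{(n)}\rho_{ji}$ for $2\le i<n$. Because $N_n$ is nonincreasing and moves only by jumps, the event $\{N_n\text{ visits }i\}$ is the disjoint union, over the possible predecessors $j\in\{i+1,\dots,n\}$ of the state $i$, of the events $\{N_n\text{ visits }j\text{ and then jumps to }i\}$, and by the strong Markov property at the hitting time of $j$ this last event has probability $q_j^{(n)}P_{ji}$. Hence $q_i^{(n)}=\sum_{j=i+1}^n q_j^{(n)}P_{ji}$, and multiplying by $\rho_i$ and using $q_j^{(n)}P_{ji}=\mu_j^{(n)}\rho_{ji}$ gives the identity. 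Carrying the same bookkeeping one step further, by also requiring the jump out of $i$ to land at $1$ and noting that avoiding the window $[i+1,k]$ forces the predecessor of $i$ to exceed $k$, I would obtain, for $i<k<n$,
\[
P\big(L_n=i,\ N_n(t)\notin[i+1,k]\text{ for all }t\big)=P_{i1}\sum_{j=k+1}^n\mu_j^{(n)}\rho_{ji}.
\]

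Next, fix $i\ge2$ and let $n=n_m\to\infty$. Nonnegativity of the terms and Fatou's lemma for series give at once $\sum_{j>i}\mu_j\rho_{ji}\le\mu_i\rho_i$, so only the reverse inequality remains. Assume first $\rho_{i1}>0$. Applying Lemma~\ref{tightness} with $\eps P_{i1}$ in place of $\eps$ and combining with the displayed identity gives, for each $\eps>0$, an integer $k=k(i,\eps)$ with $\sum_{j=k+1}^n\mu_j^{(n)}\rho_{ji}\le\eps$ for all $n$ (trivially so when $n\le k$); since this bound only improves when $k$ is enlarged, we may take $k(i,\eps)\to\infty$ as $\eps\to0$. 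Splitting $\sum_{j=i+1}^{n_m}=\sum_{j=i+1}^{k}+\sum_{j=k+1}^{n_m}$, the finite sum converges to $\sum_{j=i+1}^{k}\mu_j\rho_{ji}$ while the tail stays in $[0,\eps]$, so $0\le\mu_i\rho_i-\sum_{j=i+1}^{k}\mu_j\rho_{ji}\le\eps$; letting $\eps\to0$ yields $\mu_i\rho_i=\sum_{j=i+1}^\infty\mu_j\rho_{ji}$, which is the first equation in \eqref{quasiinva}. The case $\rho_{i1}=0$ is degenerate: since $\rho_{21}=\Lambda([0,1])>0$ it forces $i\ge3$ and $\Lambda((0,1])=0$, so $\Lambda$ is a multiple of $\delta_0$ and hence $\rho_{ji}=0$ for every $j\ge i+2$; the series then reduces to the single term $\mu_{i+1}\rho_{i+1,i}$, and the claim follows by passing to the limit directly in $\mu_i^{(n)}\rho_i=\mu_{i+1}^{(n)}\rho_{i+1,i}$.

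I expect the reverse inequality to be the main obstacle: one has to rule out an escape of mass to $j=\infty$ in the balance equation, and the only handle is the uniform-in-$n$ tightness of Lemma~\ref{tightness}. Since that lemma is phrased in terms of $\{L_n=i\}$ rather than of $\{N_n\text{ visits }i\}$, the argument must carry the extra factor $P_{i1}$ and deal separately with the degenerate case $P_{i1}=0$, i.e.\ $\Lambda=c\,\delta_0$, as above.
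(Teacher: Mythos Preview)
Your argument is correct and follows essentially the same route as the paper: you derive the exact balance identity $\mu_i^{(n)}\rho_i=\sum_{j>i}\mu_j^{(n)}\rho_{ji}$, use Lemma~\ref{tightness} to obtain a uniform-in-$n$ tail bound $\sum_{j>k}\mu_j^{(n)}\rho_{ji}\le\eps$, and then pass to the limit along $(n_m)$. Your treatment is in fact slightly more careful than the paper's, which divides by $P_{i1}$ without comment; your separate handling of the degenerate case $\rho_{i1}=0$ (forcing $\Lambda=c\,\delta_0$ and hence $\rho_{ji}=0$ for $j\ge i+2$) closes that small gap.
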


\begin{proof}
First we have for $i \ge 2$
\[  \mu_i^{(n)}\rho_{i1} = P(N_n(t)=i \text{ for some }t\ge 0) P_{i1}=P(L_n=i) \]
and therefore in the limit (along the specified sequence) by Fatou's Lemma
\[ \sum_{i\ge 2} \mu_i\rho_{i1} \le 1.\]

Second for  $2 \le i <k$
\begin{align*} P(L_n=i, N_n(t) \not\in [i+1,k] \text{ for all } t \ge 0)&=\sum_{j>k}  P(N_n(t)=j \text{ for some }t\ge 0)P_{ji}P_{i1}\\&=  \sum_{j > k} \mu_j^{(n)} \rho_{ji}P_{i1}  .
\end{align*}
Applying Lemma \ref{tightness} to the left-hand term it follows that for any $\varepsilon >0$ there is a $k$ such that for all $n$
\[ \sum_{j > k} \mu_j^{(n)} \rho_{ji} \le \varepsilon .\]
Therefore we may proceed in the equation
\[ \mu_i^{(n)}\rho_i = \sum_{j=i+1}^n \mu_j^{(n)} \rho_{ji} \]
along the given subsequence to the limit  to obtain
\begin{align}  
\mu_i\rho_i = \sum_{j=i+1}^\infty \mu_j \rho_{ji}, \quad i \ge 2 .
\label{gleichung2}
\end{align}
Thus $\mu$ is $\rho$-invariant.
\end{proof}

\begin{Lemma}\label{quasi2}
Let $\nu=(\nu_i)_{i\ge 2}$ be a measure satisfying \eqref{quasiinva} . Then for any integer $a\ge 1$ there are probability measures $\omega_a=(\omega_{i,a})_{1\le i \le a}$ on $\{1, \ldots,a\}$ such that for any $i \ge 2$ we have $\omega_{i,a} \to 0$ as $a\to \infty$, and for $1 \le i \le a$
\begin{align}\label{gleichung4}
 \nu_i= \sum_{n=i}^a \mu_i^{(n)} \omega_{n,a} .
 \end{align}
\end{Lemma}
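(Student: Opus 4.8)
The plan is to guess the weights $\omega_{n,a}$ explicitly as the ``$\nu$-flux from the levels above $a$ down into the levels $\{2,\dots,a\}$'', and then to read off the three required properties directly from the two relations in \eqref{quasiinva}. To set the stage, put $A:=\{2,\dots,a\}$ and $B:=\{a+1,a+2,\dots\}$, write $\tilde\nu_i:=\nu_i\rho_i$, and let $\bar P$ be the matrix on $\{2,3,\dots\}$ given by $\bar P_{ij}:=P_{ij}$ for $j<i$ and $\bar P_{ij}:=0$ for $j\ge i$; thus $\bar P$ is the sub-stochastic transition matrix of the block-counting chain killed at state $1$, and since $N_n$ is strictly decreasing it is block lower triangular with blocks $\bar P_{AA}$, $\bar P_{BA}$, $\bar P_{BB}$ (and no $A\to B$ part). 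In this notation the first relation in \eqref{quasiinva} is precisely $\tilde\nu\bar P=\tilde\nu$ on $\{2,3,\dots\}$, and the second is $\sum_{i\ge2}\tilde\nu_iP_{i1}=1$. I will also use the finite matrix $R_{AA}:=(I-\bar P_{AA})^{-1}=\sum_{k\ge0}\bar P_{AA}^{\,k}$ (finite because, $A$ being finite and the chain strictly decreasing, $\bar P_{AA}^{\,k}=0$ for $k\ge a-1$), whose entries, by monotonicity, are $(R_{AA})_{ni}=P(N_n(t)=i\text{ for some }t)=\rho_i\mu_i^{(n)}$ for $n,i\in A$ (in particular $(R_{AA})_{ni}=0$ for $n<i$).

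Accordingly I would set
\[ \omega_{n,a}:=\sum_{m>a}\nu_m\rho_{mn}\quad(2\le n\le a),\qquad\omega_{1,a}:=1-\sum_{i=2}^a\nu_i\rho_{i1}. \]
Each $\omega_{n,a}$ with $n\ge2$ is finite and nonnegative because $\sum_{m>a}\nu_m\rho_{mn}\le\sum_{m>n}\nu_m\rho_{mn}=\nu_n\rho_n<\infty$ by the first relation in \eqref{quasiinva}, and $\omega_{1,a}=\sum_{i>a}\nu_i\rho_{i1}\ge0$ by the second. To get $\sum_{n=1}^a\omega_{n,a}=1$ it then suffices to show that $\sum_{n=2}^a\omega_{n,a}=\sum_{i=2}^a\nu_i\rho_{i1}$. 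This drops out of summing the invariance relations $\nu_i\rho_i=\sum_{j>i}\nu_j\rho_{ji}$ over $2\le i\le a$ and interchanging the (finitely many, finite) sums, which gives
\[ \sum_{i=2}^a\nu_i\rho_i=\sum_{i=2}^a\nu_i(\rho_i-\rho_{i1})+\sum_{m>a}\nu_m\sum_{i=2}^a\rho_{mi} \]
after using $\sum_{i=2}^{j-1}\rho_{ji}=\rho_j-\rho_{j1}$ (valid also for $j=2$, where $\rho_2=\rho_{21}$); since the last double sum is exactly $\sum_{n=2}^a\omega_{n,a}$, cancelling $\sum_{i=2}^a\nu_i\rho_i$ yields the claim, so $\omega_a$ is a probability measure on $\{1,\dots,a\}$.

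It remains to verify \eqref{gleichung4} and the vanishing of $\omega_{i,a}$. Restricting $\tilde\nu\bar P=\tilde\nu$ to coordinates in $A$ gives, for $i\in A$, $\tilde\nu_i=(\tilde\nu_A\bar P_{AA})_i+\sum_{m>a}\tilde\nu_mP_{mi}=(\tilde\nu_A\bar P_{AA})_i+\sum_{m>a}\nu_m\rho_{mi}=(\tilde\nu_A\bar P_{AA})_i+\omega_{i,a}$, i.e.\ $\tilde\nu_A(I-\bar P_{AA})=\omega$ as row vectors over $A$ (with $\omega=(\omega_{n,a})_{n\in A}$), hence $\tilde\nu_A=\omega R_{AA}$; reading this off coordinatewise, $\nu_i\rho_i=\sum_{n\in A}\omega_{n,a}(R_{AA})_{ni}=\rho_i\sum_{n=i}^a\mu_i^{(n)}\omega_{n,a}$, which after dividing by $\rho_i$ is \eqref{gleichung4} for $2\le i\le a$; the case $i=1$ is the normalization $\sum_{n=1}^a\omega_{n,a}=1$ already established (reading $\nu_1=\mu_1^{(n)}=1$). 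Finally, for each fixed $i\ge2$ the value $\omega_{i,a}=\sum_{m>a}\nu_m\rho_{mi}$ is a tail of the convergent series $\sum_{m>i}\nu_m\rho_{mi}=\nu_i\rho_i$, so $\omega_{i,a}\to0$ as $a\to\infty$. The one genuinely non-routine point is the identity $\sum_{n=2}^a\omega_{n,a}=\sum_{i=2}^a\nu_i\rho_{i1}$: it is what simultaneously makes $\omega_a$ a probability measure and forces the leftover mass $\omega_{1,a}$ to vanish as $a\to\infty$, and it is the single place where both relations of \eqref{quasiinva} get used at once. Beyond that, the only care needed is that every interchange of summation is performed either among finitely many terms or among nonnegative ones, since $\nu$ itself may be an infinite measure.
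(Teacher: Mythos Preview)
Your proof is correct and in fact produces exactly the same weights as the paper: once one unpacks the paper's formula $\omega_{i,a}=\nu_i\rho_i\,\eta_{ia}$ with $\eta_{ia}=\sum_{j>a}\tilde P_{ij}=\sum_{j>a}\nu_j\rho_{ji}/(\nu_i\rho_i)$, one recovers your $\omega_{i,a}=\sum_{m>a}\nu_m\rho_{mi}$, and likewise $\omega_{1,a}=\sum_{j>a}\nu_j\rho_{j1}$. The routes to the representation differ, however. The paper introduces the dual chain $(\tilde X_r)_{r\ge0}$ on $\{1,2,\ldots\}$ with transitions $\tilde P_{ij}=\nu_j\rho_{ji}/(\nu_i\rho_i)$ started at $1$, shows inductively that $\nu_j\rho_j=P(\tilde X_r=j\text{ for some }r)$, identifies $\omega_{\cdot,a}$ as the law of the last state $\le a$ visited by $\tilde X$, and then time-reverses the path up to that last exit to obtain the block-counting chain with random initial law $\omega_a$. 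You bypass the dual chain entirely, obtaining \eqref{gleichung4} directly from the finite linear system $\tilde\nu_A(I-\bar P_{AA})=\omega_A$ together with the nilpotency of $\bar P_{AA}$, and getting the probability-measure property from a one-line flux balance across the boundary of $\{2,\dots,a\}$. Your argument is shorter and purely algebraic; the paper's has the merit of giving $\omega_a$ an intrinsic probabilistic meaning and of foreshadowing the time-reversal construction used in Theorem~\ref{timerev}.
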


\begin{proof} Denote for $i,j \ge 1$
\[ \tilde P_{ij} := \frac{\nu_j\rho_{ji}}{\nu_i\rho_i}= \frac{\nu_j\rho_j}{\nu_i\rho_i}P_{ji}, \]
where we set the undefined quantity $\nu_1\rho_1$ equal to 1.
Then the $\rho$-invariance and the norming of~$\nu$ (according to the second condition in \eqref{quasiinva}) implies $\sum_{j} \tilde P_{ij}=1$ for $i \ge 1$. Thus we may consider the Markov chain $(\tilde X_r)_{r = 0,1,\ldots}$ on $\mathbb N$  with initial state $\tilde X_0=1$ and transition matrix $(\tilde P_{ij})$.  We claim that it fulfils the equation
\[ \nu_j\rho_j=  P( \tilde X_r= j \text{ for some } r) , \quad j \ge 1. \]
We show this claim by induction. For $j=1$ both terms are equal to 1.
Suppose that it holds for $1\le i \le j-1$. Then
\begin{align*} 
 P( \tilde X_r= j \text{ for some } r) =  \sum_{i=1}^{j-1} \nu_i\rho_i \tilde P_{ij} = \nu_j\rho_j\sum_{i=1}^{j-1} P_{ji}=\nu_j\rho_j. 
\end{align*}
Next define for an integer $a>1$ the random times
\[ \xi_{a} := \max \{ r \ge 0:  \tilde X_r \le a \}\]
and for $1\le i < a$
\[ \eta_{ia} :=  P(\tilde X_{1}> a \mid \tilde X_0=i) .\]
Then for $a>1$ and $1=i_0<i_1<i_2 < \cdots < i_r\le a$
\begin{align*}
 P( \tilde X_0 =i_0,\tilde X_1=i_1, \tilde X_2=i_2,  \ldots, \tilde X_{r} = i_r, \xi_a=r)&=  \tilde P_{1i_1}\tilde P_{i_1i_2} \cdots \tilde P_{i_{r-1}i_r} \eta_{i_ra}\\
&= \omega_{i_r,a} P_{i_ri_{r-1}} \cdots P_{i_2i_1}P_{i_11} 
\end{align*}
with
\[ \omega_{i,a}:= \nu_i\rho_i \eta_{ia} , \quad 1 \le i < a\]
and $i_r=1$ in the case $r=0$ (then both products of transition probabilities are set to be 1).
For fixed $i$, summing over $1<i_1<i_2 < \cdots < i_r:=i \le a$ and $ r \ge 0$ we obtain the equality $ P(\tilde X_{\xi_a}=i)\eta_{ia}=\omega_{i,a}$, and thus $\sum_{1\le i \le a}\omega_{i,a}=1$. Therefore we may view the time-reversed process $Y_0=\tilde X_{\xi_a}, Y_1= \tilde X_{\xi_a-1}, \ldots, Y_{\xi_a}=\tilde X_0$ as a Markov chain on $\{1, \ldots, a\}$ with initial distribution $\omega_a$, transition probabilities $P_{ij}$ and killed after reaching 1. This process coincides in distribution with the block-counting process of our original coalescent in discrete time, now with initial distribution $\omega_a$. This gives another way to express $\nu_i$: For $1\le i < a$
\begin{align*}
\rho_i\nu_i=  P( Y_r= i \text{ for some } r\le \xi_a)= \sum_{n=i}^{a-1} \rho_i\mu_i^{(n)} \omega_{n,a},
\end{align*}
which is \eqref{gleichung4}. Also $\eta_{ia}\to 0$ for $a \to \infty$, which implies $\omega_{i,a}\to 0$. Thus the proof is finished.
\end{proof}

\begin{proof}[Proof of Theorem \ref{quasiinv}]
(i) Let $i \ge 2$. If $L_n \to \infty$ in probability, then as $n \to \infty$
\[ \mu_i^{(n)} = \frac{P(L_n=i)}{\rho_{i1}} \to 0 .\]
Now suppose that there is a  measure $\nu$ satisfying \eqref{quasiinva}. Then we may apply Lemma \ref{quasi2}. Let $\eps>0$ and $b>i$ such that $\mu_i^{(n)}\le \eps$ for $n >b$. From \eqref{gleichung4}  for $a>b$
\[ \nu_i \le \sum_{n=i}^b \frac 1{\rho_{i1}}\omega_{n,a}+ \sum_{n=b+1}^a \eps \omega_{n,a}\le \sum_{n=i}^b \frac 1{\rho_{i1}}\omega_{n,a}+  \eps .\]
 In the limit $a\to \infty$, since $\omega_{n,a}\to 0$ for fixed $n$, we obtain $\nu_i\le \eps$. Thus $\nu_i=0$ for all $i \ge 2$, which  is a contradiction. Hence there is no solution to \eqref{quasiinva}.

(ii) Now by assumption there is  an increasing sequence of natural numbers $n_m$, $m\ge 1$, such that as $m \to \infty$
\[ \mathbf \mu_i^{(n_m)} = \frac{P(L_{n_m}=i)}{\rho_{i1}} \to \alpha \frac {\pi_i}{\rho_{i1}}\]
for all $i \ge 2$ and for some $\alpha >0$. From Lemma \ref{quasi1} it follows that $\mu_i:= \pi_i/\rho_{i1}$ are the weights of a $\rho$-invariant measure $\mu$.

Now let $\nu$ be any solution of \eqref{quasiinva}.  By assumption we have $\mu_i^{(n)} \sim \frac{\mu_i}{\mu_2} \mu_2^{(n)}$ as $n \to \infty$. Therefore from Lemma \ref{quasi2} it follows by a similar argument as in the proof of (i) that, as $a \to \infty$,
\[ \nu_i = \sum_{n=i}^a \mu_i^{(n)} \omega_{n,a} \sim \sum_{n=i}^a \frac{\mu_i}{\mu_2}\mu_2^{(n)} \omega_{n,a} \sim \frac{\mu_i}{\mu_2} \sum_{n=2}^a \mu_2^{(n)} \omega_{n,a} =  \frac{\mu_i}{\mu_2} \nu_2. \]
This shows that $\nu$ is a multiple of $\mu$.

(iii)  In the remaining situation by means of a diagonal argument there  are two increasing  sequences such that $\mu_i^{(n)}$ converges along both sequences for all $i\ge 2$, but now the limiting measures are not multiples of each other. Thus another application of Lemma \ref{quasi1} gives the claim. This finishes the proof.
\end{proof}

\begin{proof}[Proof of Theorem \ref{timerev}]

Let   $0=\gamma_0 < \gamma_1 < \cdots < \gamma_{\zeta_n}=T_n $ be the jump times of $\hat N_n$   and let $\Delta_i:= \gamma_{i+1}-\gamma_{i}$ the interim times.
For the proof  it is now sufficient to show for fixed $r \ge 1$ convergence in distribution of the random vectors $(\hat N_n(0), \Delta_0, \ldots ,  \hat N_n(\gamma_r),\Delta_r)$ to the corresponding limiting distribution. The event $\{\zeta_n<r\}$ has vanishing probability as $n \to \infty$. In view of the strong Markov property of $N_n$ as $n \to \infty$ we have for $2\le i_0< i_1 < \cdots < i_r<n$
\begin{align*} 
 P(\hat N_n&(0)=i_0, \Delta_0 \in dt_0, \ldots,\hat N_n(\gamma_r)=i_r, \Delta_r\in dt_r) \\
&=  P( N_n((T_n-\gamma_r)-)= i_r, \Delta_r \in dt_r,\ldots, N_n(T_n-)=i_0, \Delta_0 \in dt_0)
\\&= \mu_{i_r}^{(n)} \rho_{i_r}\cdot e^{-\rho_{i_r}t_r}   \cdot \rho_{i_ri_{r-1}} \, dt_r \cdots e^{-\rho_{i_0}t_0}    \cdot  \rho_{i_01}\, dt_0 .
\end{align*}
Theorem \ref{quasiinv} (ii) implies
\begin{align*} 
 P(\hat N_n&(0)=i_0, \Delta_0 \in dt_0, \ldots,\hat N_n(\gamma_r)=i_r, \Delta_r\in dt_r) \\
&\to 
\mu_{i_r} \rho_{i_r}\cdot e^{-\rho_{i_r}t_r}   \cdot \rho_{i_ri_{r-1}} \, dt_r \cdots e^{-\rho_{i_0}t_0}    \cdot  \rho_{i_01}\, dt_0 .
\end{align*}
For $i<j$ define rates $\hat \rho_{ij}$ and $\hat \rho_i$    by
\[  \mu_i\hat \rho_{ij} = \mu_j  \rho_{ji}, \quad \hat \rho_i:= \sum_{j>i} \hat \rho_{ij}.\] 
Since $\mu$ is $\rho$-invariant,
\[ \hat \rho_i = \frac 1{\mu_i }\sum_{j >i}\mu_j\rho_{ji}=\rho_i. \]
With these terms  the above convergence statement transforms into
\begin{align*} 
 P(\hat N_n&(0)=i_0, \Delta_0 \in dt_0, \ldots,\hat N_n(\gamma_r)=i_r, \Delta_r\in dt_r) \\ &\mbox{}Ê\to \mu_{i_0}\rho_{i_01}\cdot e^{-\hat \rho_{i_0}t_0} \cdot \hat \rho_{i_0i_1}\, dt_0 \cdots e^{-\hat \rho_{i_{r-1}}t_{r-1}} \cdot
\hat \rho_{i_{r-1}i_r} \, dt_{r-1} \cdot e^{- \hat \rho_{i_r}t_r} \cdot \hat \rho_{i_r} \, dt_r \\
&=  P(\hat N_\infty(0)=i_0, \Delta_0 \in dt_0, \ldots,\hat N_\infty(\gamma_r)=i_r, \Delta_r\in dt_r).
\end{align*}
This is our claim.
\end{proof}

\bigskip
\noindent {\bf {\Large Acknowledgment}}

\bigskip
\noindent We are grateful to a referee for valuable remarks which helped to improve the presentation.

\end{document}